\newcommand{\N}{\mathbb{N}}
\newcommand{\R}{\mathbb{R}}
\newtheorem{theorem}{Theorem}
\numberwithin{theorem}{section}
\newtheorem{lemma}[theorem]{Lemma}
\numberwithin{fact}{subsection}
\newtheorem*{theorem*}{Theorem}
\newtheorem*{lemma*}{Lemma}
\newtheorem*{corollary*}{Corollary}
\newcommand{\settheoremtag}[1]{
  \let\oldthetheorem\thetheorem
  \renewcommand{\thetheorem}{#1}
  \g@addto@macro\endtheorem{
    \addtocounter{theorem}{-1}
    \global\let\thetheorem\oldthetheorem}
  }
\theoremstyle{definition}
\newtheorem{definition}[theorem]{Definition}
\newtheorem{remark}[theorem]{Remark}
\DeclareMathOperator{\zer}{zer}
\DeclareMathOperator{\prox}{prox}
\DeclareMathOperator{\id}{Id}
\DeclareMathOperator{\ran}{ran}
\DeclareMathOperator{\argmin}{argmin}
\DeclareMathOperator{\gra}{gra}
\title{Second order splitting dynamics with vanishing damping for additively structured monotone inclusions \thanks{Research partially supported by the Doctoral Programme \textit{Vienna Graduate School on Computational Optimization (VGSCO)} which is funded by FWF (Austrian Science Fund), project W1260-N35.}}
\author[1]{Radu Ioan Boţ}
\author[2]{David Alexander Hulett}
\affil[1, 2]{Faculty of Mathematics, University of Vienna}
\affil[1]{\href{mailto:radu.bot@univie.ac.at}{radu.bot@univie.ac.at}}
\affil[2]{\href{mailto:david.alexander.hulett@univie.ac.at}{david.alexander.hulett@univie.ac.at}}
\date{}
\begin{document}
 
\maketitle

\begin{abstract}
In the framework of a real Hilbert space, we address the problem of finding the zeros of the sum of a maximally monotone operator $A$ and a cocoercive operator $B$. We study the asymptotic behaviour of the trajectories generated by a second order equation with vanishing damping, attached to this problem, and governed by a time-dependent forward-backward-type operator. This is a splitting system, as it only requires forward evaluations of $B$ and backward evaluations of $A$.  A proper tuning of the system parameters ensures the weak convergence of the trajectories to the set of zeros of $A + B$, as well as fast convergence of the velocities towards zero.  A particular case of our system allows to derive fast convergence rates for the problem of minimizing the sum of a proper, convex and lower semicontinuous function and a smooth and convex function with Lipschitz continuous gradient. We illustrate the theoretical outcomes by numerical experiments. 
\end{abstract}
\bigskip

\noindent\textbf{Keywords} Asymptotic stabilization $\cdot$ Damped inertial dynamics $\cdot$ Lyapunov analysis $\cdot$ Vanishing viscosity $\cdot$ Splitting system $\cdot$ Monotone inclusions
\medskip

\noindent\textbf{Mathematics Subject Classification (2020)} 37N40 $\cdot$ 46N10 $\cdot$ 65K05 $\cdot$ 65K10 $\cdot$ 90B50 $\cdot$ 90C25
 
\section{Introduction}

\subsection{Problem formulation and a continuous time splitting scheme with vanishing damping}
Let $\mathcal{H}$ be a real Hilbert, $A: \mathcal{H}\to 2^{\mathcal{H}}$ a maximally monotone operator and $B : \mathcal{H}\to \mathcal{H}$ a $\beta$-cocoercive operator for some $\beta > 0$ such that $\zer(A + B)\neq \emptyset$. Devising fast convergent continuous and discrete time dynamics for solving monotone inclusions of the type
\begin{equation}\label{monotoneinclusion}
\mbox{find} \ x \in \mathcal{H} \ \mbox{such that} \ 0 \in (A + B)(x)
\end{equation}
is of great importance in many fields, including, but not limited to, optimization, equilibrium theory, economics and game theory, partial differential equations, and statistics. One of our main motivations comes from the fact that solving the convex optimization problem 
$$\min_{x\in \mathcal{H}} f(x) + g(x),$$
where $f : \mathcal{H} \to \R\cup \{+\infty\}$ is proper, convex and lower semicontinuous and $g : \mathcal{H} \to \R$ is convex and Fréchet differentiable with a Lipschitz continuous gradient, is equivalent to solving the monotone inclusion
\[
0 \in (\partial f + \nabla g)(x).
\]
We want to exploit the additive structure of $(\ref{monotoneinclusion})$ and approach $A$ and $B$ separately, in the spirit of the splitting paradigm.

For $t \geq t_{0} > 0$, $\alpha > 1, \xi \geq 0$, and functions $\lambda, \gamma : [t_{0}, +\infty) \to (0, +\infty)$, we will study the asymptotic behaviour of the trajectories of the second order differential equation
\begin{equation}\label{mainsystem}
\text{(Split-DIN-AVD)}\quad\ddot{x}(t) + \frac{\alpha}{t}\dot{x}(t) + \xi \left(\frac{d}{dt}T_{\lambda(t), \gamma(t)}(x(t))\right) + T_{\lambda(t), \gamma(t)}(x(t)) = 0,
\end{equation}
where, for $\lambda, \gamma > 0$,  the operator $T_{\lambda, \gamma} : \mathcal{H} \to \mathcal{H}$ is given by
\[
T_{\lambda, \gamma} = \frac{1}{\lambda}\Big[\id - J_{\gamma A}\circ(\id - \gamma B)\Big].
\]
The sets of zeros of $A+B$ and of $T_{\lambda, \gamma}$, for $\lambda, \gamma > 0$,  coincide. The nomenclature (Split-DIN-AVD) comes from the splitting feature of the continuous time scheme, as well as the link with the (DIN-AVD) system developed by Attouch and L\'aszl\'o in \cite{AttouchLaszlo} (Dynamic Inertial Newton - Asymptotic Vanishing Damping), which we will emphasize later.  We will discuss the existence and uniqueness of the trajectories generated (Split-DIN-AVD), and also show their weak convergence to the set of zeros of $A + B$ as well as the fast convergence of the velocities to zero, and convergence rates for $T_{\lambda(t), \gamma(t)}(x(t))$ and $\frac{d}{dt}T_{\lambda(t), \gamma(t)}(x(t))$ as $t\to +\infty$.

For the particular case $B = 0$,  we are left with the monotone inclusion problem
$$\mbox{find} \ x \in \mathcal{H} \ \mbox{such that} \ 0 \in A(x),$$
and the attached system
\[
\ddot{x}(t) + \frac{\alpha}{t}\dot{x}(t) + \xi \left(\frac{d}{dt}A_{\lambda(t), \gamma(t)}(x(t))\right) + A_{\lambda(t), \gamma(t)}(x(t)) = 0, 
\]
where, for $\lambda, \gamma > 0$, the operator $A_{\lambda, \gamma} : \mathcal{H} \to \mathcal{H}$ can be seen as a \textit{generalized Moreau envelope} of the operator $A$, i.e.,
\[
A_{\lambda, \gamma} = \frac{1}{\lambda}\Big[\id - J_{\gamma A}\Big].
\]
In particular, we will be able to set $\gamma(t) = \lambda(t)$ for every $t \geq t_0$.  Since for $\lambda > 0$, $A_{\lambda, \lambda} = A_{\lambda}$, this allows us to recover the (DIN-AVD) system
\[
\text{(DIN-AVD)} \quad \ddot{x}(t) + \frac{\alpha}{t}\dot{x}(t) + \xi\left(\frac{d}{dt}A_{\lambda(t)}(x(t))\right) + A_{\lambda(t)}(x(t)) = 0,
\]
addressed by Attouch and László in \cite{AttouchLaszlo}. 

If $A = 0$, and after properly redefining some parameters, we obtain the following system
\[
\ddot{x}(t) + \frac{\alpha}{t}\dot{x}(t) + \xi\left(\frac{d}{dt}\frac{1}{\eta(t)}Bx(t)\right) + \frac{1}{\eta(t)}Bx(t) = 0,
\]
with $\eta : [t_{0}, +\infty) \to (0, +\infty)$,  which addresses the monotone equation
$$\mbox{find} \ x \in \mathcal{H} \ \mbox{such that} \ B(x)=0.$$
This dynamical system approaches the cocoercive operator $B$ directly through a forward evaluation,  which is more natural, instead of having to resort to its Moreau envelope,  as in (DIN-AVD).

\subsection{Notation and preliminaries}

In this subsection, we will explain the notions which were mentioned in the previous subsection, and we will introduce some definitions and preliminary results that will be required later. Throughout the paper, we will be working in a real Hilbert space $\mathcal{H}$ with inner product $\langle \cdot, \cdot\rangle$ and corresponding norm $\| \cdot \| = \sqrt{\langle \cdot, \cdot \rangle}$. 

Let $A : \mathcal{H} \to 2^{\mathcal{H}}$ be a \textit{set-valued} operator, that is, $Ax$ is a subset of $\mathcal{H}$ for every $x\in\mathcal{H}$. The operator $A$ is totally characterized by its \textit{graph} $\gra A = \{(x, u) \in \mathcal{H}\times \mathcal{H} : u\in Ax\}$. The \textit{inverse} of $A$ is the operator $A^{-1} : \mathcal{H} \to 2^{\mathcal{H}}$ well-defined through the equivalence $x\in A^{-1}u$ if and only if $u\in Ax$. The \textit{set of zeros} of $A$ is the set $\zer A = \{x\in \mathcal{H} : 0 \in Ax\}$. For a subset $C\subseteq \mathcal{H}$, we say that $A(C) = \cup_{x\in C}Ax$. The \textit{range} of $A$ is the set $\ran A = A(\mathcal{H})$.

A set-valued operator $A$ is said to be \textit{monotone} if $\langle v - u, y - x\rangle\geq 0$ whenever $(x, u), (y, v)\in\gra A$, and \textit{maximally monotone} if it is monotone and the following implication holds:
    \[
    \widetilde{A} \:\:\text{is monotone}, \:\:\gra A \subseteq \gra\widetilde{A} \:\Longrightarrow\: A = \widetilde{A}.
    \]
Let $\lambda > 0$. The \textit{resolvent} of index $\lambda$ of $A$ is the operator $J_{\lambda A} : \mathcal{H} \to 2^{\mathcal{H}}$ given by 
    \[
    J_{\lambda A} = (\id + \lambda A)^{-1},
    \]
and the \textit{Moreau envelope} (or \textit{Yosida approximation} or \textit{Yosida regularization}) of index $\lambda$ of $A$ is the operator $A_{\lambda} : \mathcal{H} \to 2^{\mathcal{H}}$ given by
    \[
    A_{\lambda} = \frac{1}{\lambda}(\id - J_{\lambda A}),
    \]
where $\id : \mathcal{H} \to \mathcal{H}$, defined by $\id(x) = x$ for every $x\in\mathcal{H}$, is the \textit{identity} operator of  $\mathcal{H}$.   For $\lambda_{1}, \lambda_{2} > 0$, it holds $(A_{\lambda_{1}})_{\lambda_{2}} = A_{\lambda_{1} + \lambda_{2}}$.

A single-valued operator $B : \mathcal{H} \to \mathcal{H}$ is said to be $\beta$-\textit{cocoercive} for some $\beta >0$ if for every $x, y\in \mathcal{H}$ we have
    \[
    \beta \|Bx - By\|^{2} \leq \langle Bx - By, x - y\rangle.
    \]
In this case, $B$ is $\frac{1}{\beta}$-\textit{Lipschitz continuous}, namely,  for every $x, y\in \mathcal{H}$ we have 
    \[
    \|Bx - By\| \leq \frac{1}{\beta} \|x - y\|.
    \]
We say $B$ is \textit{nonexpansive} if it is $1$-Lipschitz continuous, and \textit{firmly nonexpansive} if it is $1$-cocoercive. For $\alpha \in (0, 1)$, we say $B$ is $\alpha$-\textit{averaged} if there exists a nonexpansive operator $R :\mathcal{H} \to \mathcal{H}$ such that 
\[
B = (1 - \alpha)\id + \alpha R.
\]

Let $\lambda > 0$ and $A : \mathcal{H} \to 2^{\mathcal{H}}$. According to \textit{Minty's Theorem}, $A$ is maximally monotone if and only if $\ran(\id + \lambda A) = \mathcal{H}$. In this case $J_{\lambda A}$ is single-valued and firmly nonexpansive, $A_{\lambda}$ is single-valued, $\lambda$-cocoercive, and for every $x\in \mathcal{H}$ and every $\lambda_{1}, \lambda_{2} > 0$ we have 
    \[
    \|J_{\lambda_{1}A}(x) - J_{\lambda_{2}A}(x)\| \leq |\lambda_{1} - \lambda_{2}|\|A_{\lambda_{1}}(x)\|.
    \]

Let $B : \mathcal{H} \to \mathcal{H}$ be a single-valued operator. If $B$ is $\alpha$-averaged for some $\alpha \in (0, 1)$, then $\id - B$ is $\frac{1}{2\alpha}$-cocoercive. If $B$ is monotone and continuous, then it is maximally monotone.

The following concepts and results show the strong interplay between the theory of monotone operators and the convex analysis. 

Let $f : \mathcal{H}\to \R\cup\{+\infty\}$ be a proper, convex and lower semicontinuous function. We denote the infimum of $f$ over $\mathcal{H}$ by $\min_{\mathcal{H}}f$ and the set of global minimizers of $f$ by $\argmin_{\mathcal{H}}f$. The \textit{subdifferential} of $f$ is the operator $\partial f : \mathcal{H} \to 2^{\mathcal{H}}$ defined, for every $x\in \mathcal{H}$, by 
    \[
    \partial f(x) = \{x^{*}\in\mathcal{H} : \langle x^{*}, y - x\rangle + f(x) \leq f(y) \:\:\forall y\in\mathcal{H}\}.
    \]
The subdifferential operator of $f$ is maximally monotone and $\overline x \in \zer \partial f $ $\Leftrightarrow$ $\overline x$ is a global minimizer of $f$.

Let $\lambda > 0$. The \textit{proximal operator} of $f$ of index $\lambda$ is the operator $\prox_{\lambda f} : \mathcal{H} \to \mathcal{H}$ defined, for every $x\in\mathcal{H}$, by 
    \[
    \prox_{\lambda f}(x) = J_{\lambda \partial f}(x)=\argmin_{y\in\mathcal{H}}\left[f(y) + \frac{1}{2 \lambda}\|x - y\|^{2}\right],
    \]
which also means that $\prox_{\lambda f}$ is firmly nonexpansive. The \textit{Moreau envelope} of $f$ of index $\lambda$ is the function $f_{\lambda} : \mathcal{H}\to \R$ given, for every $x\in\mathcal{H}$, by 
    \[
    f_{\lambda}(x) = f\left(\prox_{\lambda f}(x)\right) + \frac{1}{2\lambda}\|x - \prox_{\lambda f}(x)\|^{2}.
    \]
The function $f_{\lambda}$ is Fr\'echet differentiable and 
    \[
    \nabla f_{\lambda}(x) = \frac{1}{\lambda}\left(x - \prox_{\lambda f}(x)\right) = (\partial f)_{\lambda}(x) \quad \forall x \in \mathcal{H}.
    \]
Finally, if $f : \mathcal{H}\to \R$ has full domain and is Fr\'echet differentiable with $\frac{1}{\beta}$-Lipschitz continuous gradient, for $\beta >0$, then, according to \textit{Baillon-Haddad's Theorem}, $\nabla f$ is $\beta$-cocoercive.

\subsection{A brief history of inertial systems attached to optimization problems and monotone inclusions}
In the last years there have been many advances in the study of continuous time inertial systems with vanishing damping attached to monotone inclusion problems. We briefly visit them in the following paragraphs. 

\subsubsection{The Heavy Ball Method with friction}
Consider a convex and continuously differentiable function $f : \mathcal{H}\to\R$ with at least one minimizer. The  \textit{heavy ball with friction} system
\begin{equation}\label{HBF}
\text{(HBF)} \qquad \ddot{x}(t) + \mu\dot{x}(t) + \nabla f(x(t)) = 0
\end{equation}
was introduced by \'Alvarez in \cite{Alvarez} as a suitable continuous time scheme to approach the minimization of the function $f$. This system can be seen as the equation of the horizontal position $x(t)$ of an object that moves, under the force of gravity, along the graph of the function $f$, subject to a kinetic friction represented by the term $\mu\dot{x}(t)$ (a nice derivation can be seen in the work done by Attouch-Goudou-Redont in \cite{AttouchGoudouRedont}).  It is known that, if $x$ is a solution of (HBF), then $x$ converges weakly to a minimizer of $f$ and $f(x(t)) - \min_{\mathcal{H}}f = \mathcal{O}\left(\frac{1}{t}\right)$ as $t\to +\infty$. 

In recent times, the question was raised whether the damping coefficient $\mu$ could be chosen to be time-dependent. An important contribution was made by Su-Boyd-Cand\'es (in \cite{SuBoydCandes}) who studied the case of an Asymptotic Vanishing Damping coefficient $\mu(t) = \frac{\alpha}{t}$, namely, 
\begin{equation}\label{AVD}
\text{(AVD)} \qquad \ddot{x}(t) + \frac{\alpha}{t}\dot{x}(t) + \nabla f(x(t)) = 0,
\end{equation}
and proved when $\alpha \geq 3$ the rate of convergence for the functional values $f(x(t)) - \min_{\mathcal{H}}f = O\left(\frac{1}{t^{2}}\right)$ as $t\to +\infty$. This second order system can be seen as a continuous counterpart to Nesterov's accelerated gradient method from \cite{Nesterov}. Weak convergence of the trajectories generated by $\text{(AVD)}$ when $\alpha > 3$ has been shown by Attouch-Chbani-Peypouquet-Redont \cite{AttouchChbaniPeypouquetRedont} and May \cite{May}, with the improved rate of convergence for the functional values $f(x(t)) - \min_{\mathcal{H}}f = o\left(\frac{1}{t^{2}}\right)$ as $t\to +\infty$. For $\alpha = 3$, the convergence of the trajectories remains an open question, except for the one dimensional case (see \cite{AttouchChbaniRiahi}). In the subcritical case $\alpha\leq 3$, it has been shown by Apidopoulos-Aujol-Dossal \cite{Apidopoulos} and Attouch-Chbani-Riahi \cite{AttouchChbaniRiahi} that the objective values converge at a rate $\mathcal{O}(t^{-\frac{2\alpha}{3}})$ as $t\to +\infty$.

\subsubsection{Heavy Ball dynamics and cocoercive operators}

If $f : \mathcal{H}\to \R\cup\{+\infty\}$ is a proper, convex and lower semicontinuous function which is not necessarily differentiable, then we cannot make direct use of $(\ref{HBF})$. However, since for $\lambda > 0$ we have $\argmin f = \argmin f_{\lambda}$, we can replace $f$ by its Moreau envelope $f_{\lambda}$, and the system now becomes 
\[
\ddot{x}(t) + \mu \dot{x}(t) + \nabla f_{\lambda}(x(t)) = 0.
\]
In line with this idea, and in analogy with $(\ref{HBF})$, \'Alvarez and Attouch \cite{AlvarezAttouch} and Attouch and Maing\'e \cite{AttouchMainge} studied the dynamics
\begin{equation}\label{cocoercive sin vanishing damping}
\ddot{x}(t) + \mu\dot{x}(t) + B(x(t)) = 0,
\end{equation}
where $B : \mathcal{H}\to \mathcal{H}$ is a $\beta$-cocoercive operator. They were able to prove that the solutions of this system weakly converge to elements of $\zer B$ provided that the cocoercitivity parameter $\beta$ and the damping coefficient $\mu$ satisfy $\beta \mu^{2} > 1$. For a maximally monotone operator $A : \mathcal{H}\to 2^{\mathcal{H}}$, we know that its Moreau envelope is $\lambda$-cocoercive and thus, under the condition $\lambda\mu^{2} > 1$, the trajectories of 
\[
\ddot{x}(t) + \mu\dot{x}(t) + A_{\lambda}(x(t)) = 0
\]
converge weakly to elements of $\zer A_{\lambda} = \zer A$. 

Also related to $(\ref{cocoercive sin vanishing damping})$, Boţ-Csetnek \cite{BotCsetnek} considered the system
\begin{equation}\label{systemBotCsetnek}
\ddot{x}(t) + \mu(t)\dot{x}(t) + \nu(t)Bx(t) = 0,
\end{equation}
where $B : \mathcal{H}\to \mathcal{H}$ is again $\beta$-cocoercive. Under the assumption that $\mu$ and $\nu$ are locally absolutely continuous, $\dot{\mu}(t) \leq 0 \leq \dot{\nu}(t)$ for almost every $t\in[0, +\infty)$ and $\inf_{t\geq 0} \frac{\mu^{2}(t)}{\nu(t)} > \frac{1}{\beta}$, the authors were able to prove that the solutions to this system converge weakly to zeros of $B$.

In \cite{AttouchPeypouquet}, Attouch and Peypouquet addressed the system
\begin{equation}\label{DINAVD no damping}
\ddot{x}(t) + \frac{\alpha}{t}\dot{x}(t) + A_{\lambda(t)}(x(t)) = 0,
\end{equation}
where $\alpha > 1$ and the time-dependent regularizing parameter $\lambda(t)$ satisfies $\lambda(t) \frac{\alpha^{2}}{t^{2}} > 1$ for every $t \geq t_0 >0$. As well as ensuring the weak convergence of the trajectories towards elements of $\zer A$, choosing the regularizing parameter in such a fashion allowed the authors to obtain fast convergence of the velocities and accelerations towards zero. 

\subsubsection{Inertial dynamics with Hessian damping}
Let us return briefly to the $\text{(AVD)}$ system $(\ref{AVD})$. In addition to the viscous vanishing damping term $\frac{\alpha}{t}\dot{x}(t)$, the following system with Hessian-driven damping was considered by Attouch-Peypouquet-Redont in \cite{AttouchPeypouquetRedont}
\[
\ddot{x}(t) + \frac{\alpha}{t}\dot{x}(t) + \xi \nabla^{2}f(x(t))\dot{x}(t) + \nabla f(x(t)) = 0,
\]
where $\xi \geq 0$. While preserving the fast convergence properties of the Nesterov accelerated method, the Hessian-driven damping term reduces the oscillatory aspect of the trajectories. In \cite{AttouchLaszlo}, Attouch and L\'aszl\'o studied a version of $(\ref{DINAVD no damping})$ with an added Hessian-driven damping term:
\[
\ddot{x}(t) + \frac{\alpha}{t}\dot{x}(t) + \xi \left(\frac{d}{dt}A_{\lambda(t)}(x(t))\right) + A_{\lambda(t)}(x(t)) = 0.
\]
While preserving the convergence results of $(\ref{DINAVD no damping})$, the main benefit of the introduction of this damping term is the fast convergence rates that can be obtained for $A_{\lambda(t)}(x(t))$ and $\frac{d}{dt}A_{\lambda(t)}(x(t))$ as $t\to +\infty$. The regularizing parameter $\lambda(t)$ is again chosen to be time-dependent; in the general case, the authors take $\lambda(t) = \lambda t^{2}$, and in \cite{AttouchPeypouquet} it is shown that taking $\lambda(t)$ this way is critical. However, in the case where $A = \partial f$ for a proper, convex and lower semicontinuous function $f$, it is also allowed to take $\lambda(t) = \lambda t^{r}$ with $r \geq 0$. 

\subsection{Layout of the paper}
In Section \ref{sec2}, we give the proof for the existence and uniqueness of strong global solutions to (Split-DIN-AVD) by means of a Cauchy-Lipschitz-Picard argument. In Section \ref{sec3} we state the main theorem of this work, and we show the weak convergence of the solutions of $(\ref{mainsystem})$ to elements of $\zer(A + B)$, as well as the fast convergence of the velocities and accelerations to zero. We also provide convergence rates for $T_{\lambda(t), \gamma(t)}(x(t))$ and $\frac{d}{dt}T_{\lambda(t), \gamma(t)}(x(t))$ as $t\to +\infty$. We explore the particular cases $A = 0$ and $B = 0$, and show improvements with respect to previous works. In Section \ref{sec4}, we address the convex minimization case, namely, when $A = \partial f$ and $B = \nabla g$, where $f : \mathcal{H}\to \R\cup \{+\infty\}$ is a proper, convex and lower semicontinuous function and $g : \mathcal{H}\to \R$ is a convex and Fr\'echet differentiable function with Lipschitz continuous gradient, and derive, in addition, a fast convergence rate for the function values. In Section \ref{sec5}, we illustrate the theoretical results by numerical experiments. In Section \ref{sec5}, we provide an algorithm that arises from a time discretization of (Split-DIN-AVD) and discuss its convergence properties.

\section{Existence and uniqueness of trajectories}\label{sec2}
In this section, we show the existence and uniqueness of strong global solutions to (Split-DIN-AVD). For the sake of clarity, first we state the definition of a strong global solution.

\begin{definition}
We say that $x : [t_{0}, +\infty) \to \mathcal{H}$ is a \textit{strong global solution} of (Split-DIN-AVD) with Cauchy data $( x_{0}, u_{0}) \in \mathcal{H} \times  \mathcal{H}$ if
\begin{enumerate}[(i)]
    \item $x, \dot{x} : [t_{0}, +\infty) \to \mathcal{H}$ are locally absolutely continuous;
    \item $\ddot{x}(t) + \frac{\alpha}{t}\dot{x} + \xi\left(\frac{d}{dt}T_{\lambda(t), \gamma(t)}(x(t))\right) + T_{\lambda(t), \gamma(t)}(x(t)) = 0$ for almost every $t\in [t_{0}, +\infty)$;
    \item $x(t_{0}) = x_{0}$, $\dot{x}(t_{0}) = u_{0}$.
\end{enumerate}
A \text{classic solution} is just a strong global solution which is $\mathcal{C}^{2}$. Sometimes we will mention the terms \textit{strong global solution} or \textit{classic global solution} without explicit mention of the Cauchy data.
\end{definition}

The following lemma will be used to prove the existence of strong global solutions of our system, and we will need it in the proof of the main theorem as well. 
\begin{lemma}\label{lemma1}
Let $A : \mathcal{H}\to 2^{\mathcal{H}}$ be a maximally monotone operator and $B : \mathcal{H} \to \mathcal{H}$ a $\beta$-cocoercive operator for some $\beta > 0$. Then, the following statements hold: 
\begin{enumerate}[(i)]
    \item For $\lambda > 0$ and $\gamma\in (0, 2\beta)$, $T_{\lambda, \gamma}$ is a $\lambda\frac{4\beta - \gamma}{4\beta}$-cocoercive operator. In particular, this also implies that $T_{\lambda, \gamma}$ is $\frac{\lambda}{2}$-cocoercive.
    \item Choose $\lambda_{1}, \lambda_{2} > 0$, $\gamma_{1}, \gamma_{2}\in(0, 2\beta)$ and $x, y\in\mathcal{H}$. Then, for $\overline{x}\in\zer(A + B)$ it holds
    \begin{align*}
    \|\lambda_{1}T_{\lambda_{1}, \gamma_{1}}(x) - \lambda_{2}T_{\lambda_{2}, \gamma_{2}}(y)\| \leq \: &4\|x - y\| + \frac{4\beta|\gamma_{1} - \gamma_{2}|}{\gamma_{1}}\|B(x)\| + \frac{2|\gamma_{1} - \gamma_{2}|}{\gamma_{1}}\|x - \overline{x}\|, \\
    \left\|T_{\lambda_{1}, \gamma_{1}}(x) - T_{\lambda_{2}, \gamma_{2}}(y)\right\| \leq \: & \frac{1}{\lambda_{1}}\left[4\|x - y\| + 4\beta\frac{|\gamma_{1} - \gamma_{2}|}{\gamma_{1}}\|Bx\| + 2\frac{|\gamma_{1} - \gamma_{2}|}{\gamma_{1}}\|x - \overline{x}\| \right] \\
    &+  2\frac{|\lambda_{2} - \lambda_{1}|}{\lambda_{1}\lambda_{2}}\|y - \overline{x}\|.
    \end{align*}
    \item If $x$ is a classic global solution to $(\ref{mainsystem})$ and $\overline{x}\in\zer(A + B)$, then, for every $t\geq t_{0}$, we have 
    \[
    \left\|\frac{d}{dt}\left(\lambda(t)T_{\lambda(t), \gamma(t)}(x(t))\right)\right\| \leq 4\|\dot{x}(t)\| + 4\beta\frac{|\dot{\gamma}(t)|}{\gamma(t)}\|B(x(t))\| + 2\frac{|\dot{\gamma}(t)|}{\gamma(t)}\|x(t) - \overline{x}\|.
    \]
\end{enumerate}
\end{lemma}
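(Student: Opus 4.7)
For part (i), I would write $T_{\lambda,\gamma}=\frac{1}{\lambda}(\id-S_\gamma)$ with $S_\gamma:=J_{\gamma A}\circ(\id-\gamma B)$ and analyze the averagedness of $S_\gamma$. The resolvent $J_{\gamma A}$ is $\tfrac12$-averaged (firm nonexpansiveness), and the $\beta$-cocoercivity of $B$ makes $\gamma B$ into a $\tfrac{\beta}{\gamma}$-cocoercive operator, which (by the equivalence recalled in the preliminaries) forces $\id-\gamma B$ to be $\tfrac{\gamma}{2\beta}$-averaged for $\gamma\in(0,2\beta)$. The standard composition rule for averaged operators then yields that $S_\gamma$ is $\tfrac{2\beta}{4\beta-\gamma}$-averaged, so $\id-S_\gamma$ is $\tfrac{4\beta-\gamma}{4\beta}$-cocoercive, and dividing by $\lambda$ gives the claimed constant. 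The "in particular" statement is immediate from $\gamma<2\beta$, which forces $\tfrac{4\beta-\gamma}{4\beta}>\tfrac12$.

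For the first inequality of part (ii), the starting observation is that $\lambda T_{\lambda,\gamma}(x)=x-S_\gamma(x)=:U_\gamma(x)$ is independent of $\lambda$. I would estimate
\[
\|U_{\gamma_1}(x)-U_{\gamma_2}(y)\|\leq\|x-y\|+\|S_{\gamma_2}(x)-S_{\gamma_2}(y)\|+\|S_{\gamma_1}(x)-S_{\gamma_2}(x)\|,
\]
collapse the middle summand to $\|x-y\|$ via the nonexpansiveness of $S_{\gamma_2}$ from (i), and control the last summand by inserting $J_{\gamma_2 A}(x-\gamma_1 Bx)$: this produces $|\gamma_1-\gamma_2|\,\|A_{\gamma_1}(x-\gamma_1 Bx)\|$ through the resolvent identity in the preliminaries, plus a term $|\gamma_1-\gamma_2|\,\|Bx\|$ from the nonexpansiveness of $J_{\gamma_2 A}$. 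The key algebraic identity
\[
A_{\gamma_1}(x-\gamma_1 Bx)=\tfrac{1}{\gamma_1}U_{\gamma_1}(x)-Bx,
\]
together with the bound $\|U_{\gamma_1}(x)\|=\|U_{\gamma_1}(x)-U_{\gamma_1}(\overline{x})\|\leq 2\|x-\overline{x}\|$ (which uses $S_{\gamma_1}(\overline{x})=\overline{x}$ for $\overline{x}\in\zer(A+B)$ and the cocoercivity of $U_{\gamma_1}$ from (i)), controls $\|A_{\gamma_1}(x-\gamma_1 Bx)\|$ by $\tfrac{2}{\gamma_1}\|x-\overline{x}\|+\|Bx\|$; the bare $|\gamma_1-\gamma_2|\|Bx\|$ contributions are rewritten as $\tfrac{\gamma_1|\gamma_1-\gamma_2|}{\gamma_1}\|Bx\|\leq\tfrac{2\beta|\gamma_1-\gamma_2|}{\gamma_1}\|Bx\|$ using $\gamma_1<2\beta$, and all $\|Bx\|$-contributions consolidate into the stated coefficient $\tfrac{4\beta|\gamma_1-\gamma_2|}{\gamma_1}$.

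The second inequality of (ii) follows from the first by writing
\[
T_{\lambda_1,\gamma_1}(x)-T_{\lambda_2,\gamma_2}(y)=\tfrac{1}{\lambda_1}\bigl[U_{\gamma_1}(x)-U_{\gamma_2}(y)\bigr]+\Bigl(\tfrac{1}{\lambda_1}-\tfrac{1}{\lambda_2}\Bigr)U_{\gamma_2}(y),
\]
and using $\|U_{\gamma_2}(y)\|=\|y-S_{\gamma_2}(y)\|\leq 2\|y-\overline{x}\|$ obtained as before. Part (iii) is the trajectory version of the first inequality of (ii): apply it with $(x(t+h),\gamma(t+h),\lambda(t+h))$ and $(x(t),\gamma(t),\lambda(t))$, divide by $|h|$, and let $h\to 0$; for a classical solution and $\mathcal{C}^1$ parameter $\gamma$ the left-hand side tends to $\|\tfrac{d}{dt}(\lambda(t)T_{\lambda(t),\gamma(t)}(x(t)))\|$ and the right-hand side to the stated expression.

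I expect the main obstacle to be the constant bookkeeping in part (ii), specifically avoiding stray $\|B\overline{x}\|$ terms that a naive Lipschitz estimate of $A_{\gamma_1}$ against the reference point $\overline{x}-\gamma_1 B\overline{x}$ (where $A_{\gamma_1}$ equals $-B\overline{x}$) would introduce; the identity $A_{\gamma_1}(x-\gamma_1 Bx)=\tfrac{1}{\gamma_1}U_{\gamma_1}(x)-Bx$ is what sidesteps this issue and makes the bound depending only on $\|Bx\|$ and $\|x-\overline{x}\|$ achievable.
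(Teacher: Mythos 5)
Your proposal is correct and follows essentially the same route as the paper: averagedness of $J_{\gamma A}\circ(\id-\gamma B)$ for part (i), the resolvent perturbation identity plus the algebraic identity $A_{\gamma_1}(x-\gamma_1Bx)=\tfrac{1}{\gamma_1}U_{\gamma_1}(x)-Bx$ and the Lipschitz bound $\|U_{\gamma_1}(x)\|\leq 2\|x-\overline{x}\|$ for part (ii), and a difference quotient for part (iii). The one deviation is in part (ii): you insert $S_{\gamma_2}(x)$ and invoke nonexpansiveness of the full forward--backward map $S_{\gamma_2}$, which collapses the $(x,y)$-difference to a single $\|x-y\|$, whereas the paper inserts $J_{\gamma_2A}(x-\gamma_1Bx)$ and uses only the nonexpansiveness of $J_{\gamma_2A}$, forcing a further split of $\|\gamma_1Bx-\gamma_2By\|$ and an extra $\gamma_2\|Bx-By\|\leq 2\|x-y\|$; your variant therefore yields the stronger constant $2\|x-y\|$ in place of the paper's $4\|x-y\|$, and of course still implies the stated inequality.
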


\begin{proof}
(i) From \cite[Proposition 26.1(iv)(d)]{BC} we know that the operator $J_{\gamma A}\circ (\id - \gamma B)$ is $\alpha = \frac{2\beta}{4\beta - \gamma}$-averaged. From \cite[Proposition 4.39]{BC}, we obtain that $\id - J_{\gamma A}\circ(\id - \gamma B)$ is $\frac{1}{2\alpha}$-cocoercive, namely, it is $\frac{4\beta - \gamma}{4\beta}$-cocoercive. Since $\gamma\in (0, 2\beta)$, we have $\frac{4\beta - \gamma}{4\beta} > \frac{2\beta}{4\beta} = \frac{1}{2}$, which implies that $\id - J_{\gamma A}\circ(\id - \gamma B)$ is $\frac{1}{2}$-cocoercive and thus
    \[
    T_{\lambda, \gamma} \:\:\:\text{is}\:\:\:\lambda\frac{4\beta - \gamma}{4\beta}\text{-cocoercive}\:\:\:\text{and}\:\:\:T_{\lambda, \gamma} \:\:\:\text{is}\:\:\:\frac{\lambda}{2}\text{-cocoercive}.
    \]
    
(ii) We have
\begin{align*}
    \|\lambda_{1}T_{\lambda_{1}, \gamma_{1}}(x) - \lambda_{2}T_{\lambda_{2}, \gamma_{2}}(y)\| \leq  & \ \|x - y\| + \|J_{\gamma_{1}A}(x - \gamma_{1}B(x)) - J_{\gamma_{2}A}(y - \gamma_{2}B(y))\| \\
    \leq & \ \|x - y\| + \|J_{\gamma_{1}A}(x - \gamma_{1}B(x)) - J_{\gamma_{2}A}(x - \gamma_{1}B(x))\| \\
    &+ \|J_{\gamma_{2}A}(x - \gamma_{1}B(x)) - J_{\gamma_{2}A}(y - \gamma_{2}B(y))\| \\
    \leq & \ 2\|x - y\| + |\gamma_{1} - \gamma_{2}|\|A_{\gamma_{1}}(x - \gamma_{1}B(x))\| + \|\gamma_{1}B(x) - \gamma_{2}B(y)\| \\
    \leq & \ 2\|x - y\| + |\gamma_{1} - \gamma_{2}|\|A_{\gamma_{1}}(x - \gamma_{1}B(x))\| \\
    & + \|\gamma_{1}B(x) - \gamma_{2}B(x)\| + \|\gamma_{2}B(x) - \gamma_{2}B(y)\| \\
    = & \ 2\|x - y\| + |\gamma_{1} - \gamma_{2}|\|A_{\gamma_{1}}(x - \gamma_{1}B(x))\| \\
    & \ + |\gamma_{1} - \gamma_{2}|\|B(x)\| + \gamma_{2}\|B(x) - B(y)\|.
\end{align*}
Now, notice that 
\begin{align*}
A_{\gamma_{1}}(x - \gamma_{1}B(x)) &= \frac{1}{\gamma_{1}}(\id - J_{\gamma_{1}A})(x - \gamma_{1}B(x)) = - B(x) + \frac{1}{\gamma_{1}}(x - J_{\gamma_{1}A}(x - \gamma_{1}B(x))) \\
&= -B(x) + T_{\gamma_{1}, \gamma_{1}}(x),
\end{align*}
so using (i) and the fact that $T_{\gamma_{1}, \gamma_{2}}(\overline{x}) = 0$, we obtain
\begin{equation}\label{aux3}
\begin{split}
    \|A_{\gamma_{1}}(x - \gamma_{1}B(x))\| &= \|-B(x) + T_{\gamma_{1},\gamma_{2}}(x)\| \leq \|B(x)\| + \|T_{\gamma_{1}, \gamma_{2}}(x) - T_{\gamma_{1}, \gamma_{2}}(\overline{x})\| \\
    &\leq \|B(x)\| + \frac{2}{\gamma_{1}}\|x - \overline{x}\|.
\end{split}
\end{equation}
Altogether, plugging $(\ref{aux3})$ into our initial inequality yields
\begin{align*}
    \|\lambda_{1}T_{\lambda_{1}, \gamma_{2}}(x) - \lambda_{2}T_{\lambda_{2}, \gamma_{2}}(y)\| \leq \: &2\|x - y\| + 2|\gamma_{1} - \gamma_{2}|\|B(x)\| + \frac{2|\gamma_{1} - \gamma_{2}|}{\gamma_{1}}\|x - \overline{x}\| + \gamma_{2}\|B(x) - B(y)\| \\
    \leq \: &2\|x - y\| + \frac{4\beta|\gamma_{1} - \gamma_{2}|}{\gamma_{1}}\|B(x)\| + \frac{2|\gamma_{1} - \gamma_{2}|}{\gamma_{1}}\|x - \overline{x}\| + 2\beta\left(\frac{1}{\beta}\right)\|x - y\|.
\end{align*}
To show the second inequality, we use the previous one. We have
\begin{align*}
    \left\|T_{\lambda_{1}, \gamma_{1}}(x) - T_{\lambda_{2}, \gamma_{2}}(y)\right\| =   & \ \frac{1}{\lambda_{1}}\left\|\lambda_{1}T_{\lambda_{1}, \gamma_{1}}(x) - \lambda_{2}T_{\lambda_{2}, \gamma_{2}}(y) + (\lambda_{2} - \lambda_{1})T_{\lambda_{2}, \gamma_{2}}(y)\right\| \\
    \leq & \ \frac{1}{\lambda_{1}}\left[4\|x - y\| + 4\beta\frac{|\gamma_{1} - \gamma_{2}|}{\gamma_{1}}\|Bx\| + 2\frac{|\gamma_{1} - \gamma_{2}|}{\gamma_{1}}\|x - \overline{x}\|\right] + \frac{|\lambda_{2} - \lambda_{1}|}{\lambda_{1}}\left\|T_{\lambda_{2}, \gamma_{2}}(y)\right\| \\
    \leq & \ \frac{1}{\lambda_{1}}\left[4\|x - y\| + 4\beta\frac{|\gamma_{1} - \gamma_{2}|}{\gamma_{1}}\|Bx\| + 2\frac{|\gamma_{1} - \gamma_{2}|}{\gamma_{1}}\|x - \overline{x}\|\right] + 2\frac{|\lambda_{2} - \lambda_{1}|}{\lambda_{1}\lambda_{2}}\left\|y - \overline{x}\right\|,
\end{align*}
where the last line is a consequence of $T_{\lambda_{2}, \gamma_{2}}$ being $\frac{\lambda_{2}}{2}$-cocoercive, and hence $\frac{2}{\lambda_{2}}$-Lipschitz continuous (see (i)). 

(iii) For $t, s \geq t_{0}$ set
    \[
    x = x(t),\:\: y = x(s),\:\: \lambda_{1} = \lambda(t),\:\: \gamma_{1} = \gamma(t),\:\: \lambda_{2} = \lambda(s),\:\: \gamma_{2} = \gamma(s)
    \]
and use (ii) to obtain, for every $t\geq t_{0}$,
\begin{align*}
& \ \frac{\|\lambda(t)T_{\lambda(t), \gamma(t)}(x(t)) - \lambda(s)T_{\lambda(s), \gamma(s)}(x(s))\|}{|t - s|} \\
    \leq & \ 4\frac{\|x(t) - x(s)\|}{|t - s|} + \frac{4\beta}{\gamma(t)}\frac{|\gamma(t) - \gamma(s)|}{|t - s|}\|B(x(t))\| +\: \frac{2}{\gamma(t)}\frac{|\gamma(t) - \gamma(s)|}{|t - s|}\|x(t) - \overline{x}\|.  
\end{align*}
Hence, by taking the limit as $s\to t$ we get, for any $t\geq t_{0}$,
\begin{align*}
    \left\|\frac{d}{dt}\lambda(t)T_{\lambda(t), \gamma(t)}(x(t))\right\| \leq 4\|\dot{x}(t)\| + 4\beta\frac{|\dot{\gamma}(t)|}{\gamma(t)}\|B(x(t))\| + 2\frac{|\dot{\gamma}(t)|}{\gamma(t)}\|x(t) - \overline{x}\|.
\end{align*}
\end{proof}

The next theorem concerns the existence and uniqueness of strong global solutions to (Split-DIN-AVD).

\begin{theorem}\label{existence}
Assume that $\lambda, \gamma : [t_{0}, +\infty)\to (0, +\infty)$ are Lebesgue measurable functions and that $\inf_{t\geq t_{0}}\lambda(t) > 0$. Then, for any $(x_{0}, u_{0})\in\mathcal{H}\times\mathcal{H}$ there exists a unique strong global solution $x : [t_{0}, +\infty)\to \mathcal{H}$ of the system $(\ref{mainsystem})$ that satisfies $x(t_{0}) = x_{0}$ and $\dot{x}(t_{0}) = u_{0}$. 
\end{theorem}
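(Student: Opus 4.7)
The plan is to reduce the second-order equation \eqref{mainsystem}, which in its raw form is not even a standard ODE because of the explicit time derivative $\frac{d}{dt}T_{\lambda(t),\gamma(t)}(x(t))$, to a first-order Carathéodory system on the phase space $\mathcal{H}\times\mathcal{H}$, and then invoke the Cauchy--Lipschitz--Picard theorem for measurable time-dependent vector fields.

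The reduction goes via the standard Attouch--László-type change of variable. I introduce
\[
y(t) := \dot{x}(t) + \xi\, T_{\lambda(t),\gamma(t)}(x(t)),
\]
so that $\dot{y}(t) = \ddot{x}(t) + \xi\tfrac{d}{dt}T_{\lambda(t),\gamma(t)}(x(t))$. Substituting into \eqref{mainsystem} and expressing $\dot{x}(t)$ in terms of $y(t)$ and $T_{\lambda(t),\gamma(t)}(x(t))$ gives the equivalent first-order system
\[
\begin{cases}
\dot{x}(t) = y(t) - \xi\, T_{\lambda(t),\gamma(t)}(x(t)), \\[2pt]
\dot{y}(t) = -\dfrac{\alpha}{t}\, y(t) + \left(\dfrac{\alpha\xi}{t} - 1\right) T_{\lambda(t),\gamma(t)}(x(t)),
\end{cases}
\]
with initial data $(x(t_0),y(t_0)) = (x_0,\, u_0 + \xi\, T_{\lambda(t_0),\gamma(t_0)}(x_0))$. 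Writing $z=(x,y)$, this takes the form $\dot{z}(t) = F(t,z(t))$. A strong global solution of \eqref{mainsystem} is then in one-to-one correspondence with a locally absolutely continuous solution of this system on $[t_0,+\infty)$.

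Next, I verify the three hypotheses needed to apply the Carathéodory version of Cauchy--Lipschitz--Picard. \textbf{(a) Measurability in $t$:} for fixed $z$, measurability of $F(\cdot,z)$ reduces to measurability of $t\mapsto T_{\lambda(t),\gamma(t)}(x)$; since $\lambda$ and $\gamma$ are assumed Lebesgue measurable, and since Lemma \ref{lemma1}(ii) shows that $(\lambda,\gamma)\mapsto T_{\lambda,\gamma}(x)$ is continuous at every $(\lambda,\gamma)\in(0,+\infty)\times(0,2\beta)$, the composition is measurable. \textbf{(b) Lipschitz in $z$, locally uniformly in $t$:} from Lemma \ref{lemma1}(i), $T_{\lambda(t),\gamma(t)}$ is $\tfrac{2}{\lambda(t)}$-Lipschitz; combined with $\inf_{t\geq t_0}\lambda(t) > 0$ and the boundedness of $\tfrac{\alpha}{t}$ and $\tfrac{\alpha\xi}{t} - 1$ on any compact interval $[t_0,T]$, this yields a constant $L_T$ with $\|F(t,z) - F(t,z')\| \leq L_T\|z - z'\|$ for all $t\in[t_0,T]$ and all $z,z'$. \textbf{(c) Linear growth:} fixing $\overline{x}\in\zer(A+B)$, the identity $T_{\lambda(t),\gamma(t)}(\overline{x})=0$ together with (b) gives $\|T_{\lambda(t),\gamma(t)}(x)\| \leq \tfrac{2}{\inf\lambda}\|x-\overline{x}\|$, whence
\[
\|F(t,z)\| \leq c_1(t) + c_2(t)\|z\|
\]
with $c_1,c_2\in L^1_{\mathrm{loc}}([t_0,+\infty))$. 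Items (a)--(c) allow invocation of the standard Carathéodory existence and uniqueness theorem on each $[t_0,T]$, and the linear growth bound (c) prevents blow-up in finite time, giving the global solution.

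The main obstacle is really the first step: the equation is not an ODE in the classical sense because of the $\tfrac{d}{dt}T_{\lambda(t),\gamma(t)}(x(t))$ term, and one has to recognize that absorbing $\xi T_{\lambda(t),\gamma(t)}(x(t))$ into the velocity variable turns this ``implicit'' term into a genuine ODE. Once that reformulation is in place, the remaining work is essentially bookkeeping based on Lemma \ref{lemma1}; the assumption $\inf_{t\geq t_0}\lambda(t)>0$ is precisely what is needed to get a uniform Lipschitz constant for $T_{\lambda(t),\gamma(t)}$ across time, which in turn powers both the local Lipschitz condition in $z$ and the linear growth estimate.
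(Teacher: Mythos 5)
Your proposal is correct and rests on the same overall strategy as the paper: recast (Split-DIN-AVD) as a first-order Carath\'eodory system $\dot z(t)=F(t,z(t))$ on $\mathcal{H}\times\mathcal{H}$ and apply the Cauchy--Lipschitz--Picard theorem (the paper cites Haraux's Proposition~6.2.1), with the Lipschitz and linear-growth bounds on $F$ both ultimately coming from Lemma~\ref{lemma1}(i) and the standing assumption $\inf_{t\ge t_0}\lambda(t)>0$. The one genuine difference is the change of variable. You take the natural one, $y=\dot x+\xi\,T_{\lambda(t),\gamma(t)}(x)$, which leads to
\[
\dot x = y-\xi\,T_{\lambda(t),\gamma(t)}(x),\qquad
\dot y = -\frac{\alpha}{t}\,y+\Bigl(\frac{\alpha\xi}{t}-1\Bigr)T_{\lambda(t),\gamma(t)}(x),
\]
whereas the paper uses the change of variable customary in the Hessian-driven damping literature (cf.\ \cite{AlvarezAttouchBolteRedont,AttouchPeypouquetRedont,AttouchLaszlo}), whose first component reads
\[
\dot x(t)+\xi\,T_{\lambda(t),\gamma(t)}(x(t))-\Bigl(\frac{1}{\xi}-\frac{\alpha}{t}\Bigr)x(t)+\frac{1}{\xi}y(t)=0 .
\]
The paper's $y$ thus also absorbs a multiple of $x$, and the coefficients involve $1/\xi$, which is why the paper must treat $\xi=0$ as a separate case with the trivial phase-space reduction $y=\dot x$. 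Your reformulation contains no such division by $\xi$ and therefore handles $\xi\ge 0$ uniformly, which is a genuine, if minor, simplification; beyond that the verification of measurability in $t$, local Lipschitz continuity in $z$, and linear growth is the same in substance, and your invocation of $T_{\lambda(t),\gamma(t)}(\overline x)=0$ for the growth bound matches the paper exactly.
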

\begin{proof}
We will rely on \cite[Proposition 6.2.1]{Haraux} and distinguish between the cases $\xi > 0$ and $\xi = 0$. For each chase, we will check that the conditions of the afforementioned proposition are fulfilled. We will be working in the real Hilbert space $\mathcal{H}\times\mathcal{H}$ endowed with the norm $\|(x, y)\| = \|x\| + \|y\|$. Let $\overline{x}\in\zer(A + B)$ be fixed. 

\textbf{The case} $\xi > 0$. First, it can be easily checked (see also \cite{AlvarezAttouchBolteRedont, AttouchLaszlo, AttouchPeypouquetRedont}) that for all $t\geq t_{0}$ the following dynamical systems are equivalent
    \begin{itemize}
    \item[$*$] $\displaystyle\ddot{x}(t) + \frac{\alpha}{t}\dot{x}(t) + \xi \left(\frac{d}{dt}T_{\lambda(t), \gamma(t)}(x(t))\right) + T_{\lambda(t), \gamma(t)}(x(t)) = 0$.
    \item[$*$] $\displaystyle\begin{cases}
    \dot{x}(t) + \xi T_{\lambda(t), \gamma(t)}(x(t)) - \left(\frac{1}{\xi} - \frac{\alpha}{t}\right)x(t) + \frac{1}{\xi}y(t) = 0, \\
    \dot{y}(t) - \left(\frac{1}{\xi} - \frac{\alpha}{t} + \frac{\alpha\xi}{t^{2}}\right)x(t) + \frac{1}{\xi}y(t) = 0.
    \end{cases}$
    \end{itemize}
In other words, $(\ref{mainsystem})$ with Cauchy data $(x_{0}, u_{0}) = (x(t_{0}), \dot{x}(t_{0}))$ is equivalent to the first order system
\[
\begin{cases}
\dot{z}(t) = F(t, z(t)), \\
z(t_{0}) = (x_{0}, y_{0}),
\end{cases}
\]
where $z(t) = (x(t), y(t))$, $F$ is given, for every $t\geq t_{0}$, by
\[
F(t, (x, y)) = \left[-\xi T_{\lambda(t), \gamma(t)}(x) + \left(\frac{1}{\xi} - \frac{\alpha}{t}\right)x - \frac{1}{\xi}y, \: \left(\frac{1}{\xi} - \frac{\alpha}{t} + \frac{\alpha\xi}{t^{2}}\right)x - \frac{1}{\xi}y\right]
\]
and the Cauchy data is $x_{0} = x(t_{0})$, $y_{0} = -\xi\left(u_{0} + \xi T_{\lambda(t_{0}), \gamma(t_{0})}(x_{0}) - \left(\frac{1}{\xi} - \frac{\alpha}{t_{0}}\right)x_{0}\right)$. 


(i) Let $t\in [t_{0}, +\infty)$ be fixed. We need to verify the Lipschitz continuity of $F$ on the $z$ variable. Set $z = (x, y)$, $w = (u, v)$. We have
\begin{align*}
    \|F(t, z) - F(t, w)\| = \: &\left\|-\xi\left(T_{\lambda(t), \gamma(t)}(x) - T_{\lambda(t), \gamma(t)}(u) + \left(\frac{1}{\xi} - \frac{\alpha}{t}\right)(x - u) - \frac{1}{\xi}(y - v)\right)\right\| \\
    &+ \left\|\left(\frac{1}{\xi} - \frac{\alpha}{t} + \frac{\alpha\xi}{t^{2}}\right)(x - u) - \frac{1}{\xi}(y - v)\right\|.
\end{align*}
Set $\underline{\lambda} := \inf_{t\geq t_{0}}\lambda(t)  > 0$. According to Lemma \ref{lemma1}(i), the term involving the operator $T_{\lambda(t), \gamma(t)}$ satisfies
\[
\left\|T_{\lambda(t), \gamma(t)}(x) - T_{\lambda(t), \gamma(t)}(u)\right\| \leq \frac{2}{\lambda(t)}\|x - u\| \leq \frac{2}{\underline{\lambda}}\|x - u\|.
\]
It follows that, if we take
\[
K(t) := \max\left\{\frac{2\xi}{\underline{\lambda}} + \left|\frac{1}{\xi} - \frac{\alpha}{t}\right| + \left|\frac{1}{\xi} - \frac{\alpha}{t} + \frac{\alpha\xi}{t^{2}}\right|, \: \frac{2}{\xi}\right\} \quad \forall t\geq t_{0},
\]
then we have $K\in L_{\text{loc}}^{1}([t_{0}, +\infty), \R)$ and 
\[
\|F(t, z) - F(t, w)\| \leq K(t)\|z - w\| \quad \forall t\geq t_{0}.
\]

(ii) Now, we claim that $F$ fulfills a boundedness condition. For $t\in [t_{0}, +\infty)$ and $z = (x, y)\in \mathcal{H} \times \mathcal{H}$ we have
\begin{align*}
    \|F(t, z)\| = \left\|-\xi T_{\lambda(t), \gamma(t)}(x) + \left(\frac{1}{\xi} - \frac{\alpha}{t}\right)x - \frac{1}{\xi}y\right\|  + \left\|\left(\frac{1}{\xi} - \frac{\alpha}{t} + \frac{\alpha\xi}{t^{2}}\right)x - \frac{1}{\xi}y\right\|.
\end{align*}
By Lemma \ref{lemma1}(i), we have, for every $t\geq t_{0}$,
\[
\left\|T_{\lambda(t), \gamma(t)}(x)\right\| = \left\|T_{\lambda(t), \gamma(t)}(x) - T_{\lambda(t), \gamma(t)}(\overline{x})\right\| \leq \frac{2}{\lambda(t)}\|x - \overline{x}\|.
\]
Hence, if we take 
\[
P(t) = \max\left\{\frac{2\xi}{\lambda(t)} + \left|\frac{1}{\xi} - \frac{\alpha}{t}\right| + \left|\frac{1}{\xi} - \frac{\alpha}{t} + \frac{\alpha\xi}{t^{2}}\right|, \frac{2\xi}{\lambda(t)},\: \frac{2}{\xi}\right\} \quad \forall t\geq t_{0},
\]
then we have $P\in L_{\text{loc}}^{1}([t_{0}, +\infty), \R)$ and 
\[
\|F(t, z)\| \leq P(t)(1 + \|z\|).
\]

We have checked that the conditions of \cite[Proposition 6.2.1]{Haraux} hold. Therefore, there exists a unique locally absolutely continuous solution $t\mapsto x(t)$ of $(\ref{mainsystem})$ that satisfies $x(t_{0}) = x_{0}$ and $\dot{x}(t_{0}) = u_0$.

\textbf{The case} $\xi = 0$. Now, $(\ref{mainsystem})$ is easily seen to be equivalent to 
\[
\begin{cases}
\dot{z}(t) = F(t, z(t)), \\
z(t_{0}) = (x_{0}, u_{0}),
\end{cases},
\]
where $z(t) = (x(t), y(t))$ and $F$ is given, for every $t\geq t_{0}$, by
\[
F(t, (x, y)) = \left[y, \: -\frac{\alpha}{t}y - T_{\lambda(t), \gamma(t)}(x)\right].
\]
Showing that $F$ fulfills the required properties is starightforward.  
\end{proof}

\section{The convergence properties of the trajectories}\label{sec3}

In this section, we will study the asymptotic behaviour of the trajectories of the system 
\begin{equation*}
    \text{(Split-DIN-AVD)} \quad\ddot{x}(t) + \frac{\alpha}{t}\dot{x}(t) + \xi \frac{d}{dt}\left(T_{\lambda(t), \gamma(t)}(x(t))\right) + T_{\lambda(t), \gamma(t)}(x(t)) = 0,
\end{equation*}
where 
\begin{equation*}
    T_{\lambda, \gamma}(x) = \frac{1}{\lambda}\big[\id - J_{\gamma A}\circ(\id - \gamma B)\big].
\end{equation*}
We will show weak convergence of the trajectories generated by $(\ref{mainsystem})$ to elements of $\zer(A + B)$, as well as the fast convergence of the velocities and accelerations to zero. Additionally, we will provide convergence rates for $T_{\lambda(t), \gamma(t)}(x(t))$ and $\frac{d}{dt}T_{\lambda(t), \gamma(t)}(x(t))$ as $t\to +\infty$. To avoid repetition of the statement ``for almost every $t$'', in the following theorem we will assume we are working with a classic global solution of our system. 

\begin{theorem}\label{maintheorem}
Let $A : \mathcal{H}\to 2^{\mathcal{H}}$ be a maximally monotone operator and $B : \mathcal{H}\to \mathcal{H}$ a $\beta$-cocoercive operator for some $\beta > 0$ such that $\zer(A + B)\neq \emptyset$. Assume that $\alpha > 1$, $\xi \geq 0$, $\lambda(t) = \lambda t^{2}$ for $\lambda > \frac{2}{(\alpha - 1)^{2}}$ and all $t\geq t_{0}$, and that $\gamma: [t_{0}, +\infty)\to (0, 2\beta)$ is a differentiable function that satisfies $\frac{\dot{\gamma}(t)}{\gamma(t)} = \mathcal{O}\left(\frac{1}{t}\right)$ as $t\to +\infty$. Then, for a solution $x : [t_{0}, +\infty) \to \mathcal{H}$ to \textnormal{(Split-DIN-AVD)}, the following statements hold:
\begin{enumerate}[(i)]
    \item $x$ is bounded.
    \item We have the estimates
    \begin{gather*}
    \int_{t_{0}}^{+\infty}t\|\dot{x}(t)\|^{2}dt < +\infty, \quad \int_{t_{0}}^{+\infty}t^{3}\|\ddot{x}(t)\|^{2}dt < +\infty, \\ 
    \int_{t_{0}}^{+\infty}\frac{\gamma^{2}(t)}{t}\left\|A_{\gamma(t)}\Big[x(t) - \gamma(t)Bx(t)\Big] + Bx(t)\right\|^{2}dt < +\infty.
    \end{gather*}
    \item We have the convergence rates
    \begin{align*}
    & \|\dot{x}(t)\| = o\left(\frac{1}{t}\right),  \|\ddot{x}(t)\| = \mathcal{O}\left(\frac{1}{t^{2}}\right),\\
    & \left\|A_{\gamma(t)}\Big[x(t) - \gamma(t)Bx(t)\Big] + Bx(t)\right\| = o\left(\frac{1}{\gamma(t)}\right), \\ 
& \left\|\frac{d}{dt}  \left(A_{\gamma(t)}\Big[x(t) - \gamma(t)Bx(t)\Big] + Bx(t)\right)\right\| = \mathcal{O}\left(\frac{1}{t\gamma(t)}\right) + o\left(\frac{t^{2}\left|\frac{d}{dt}\frac{\gamma(t)}{\lambda(t)}\right|}{\gamma^{2}(t)}\right) 
    \end{align*}
    as $t\to +\infty$.
    \item If $0 < \inf_{t \geq t_{0}}\gamma(t) \leq \sup_{t\geq t_{0}}\gamma(t) < 2\beta$, then $x(t)$ converges weakly to an element of $\zer (A + B)$ as $t \rightarrow +\infty$.
\end{enumerate}
\end{theorem}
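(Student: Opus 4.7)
The plan is to run a Lyapunov-style asymptotic analysis adapted from the Attouch--László treatment of (DIN-AVD), with $A_{\lambda(t)}$ replaced throughout by the forward-backward operator $T_{\lambda(t),\gamma(t)}$. Fix $\overline{x}\in\zer(A+B)$ and consider an energy of the form
\[
\mathcal{E}(t) = \frac{1}{2}\bigl\|(\alpha-1-\varepsilon)(x(t)-\overline{x}) + t\dot{x}(t) + \xi t\, T_{\lambda(t),\gamma(t)}(x(t))\bigr\|^{2} + \text{lower-order corrections},
\]
with small slack $\varepsilon>0$ and correction terms of the form $c_{1} t^{2}\|T_{\lambda(t),\gamma(t)}(x(t))\|^{2}$ and $c_{2}\|x(t)-\overline{x}\|^{2}$. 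The scaling $\lambda(t)=\lambda t^{2}$ is dictated by the fact that, by Lemma \ref{lemma1}(i), $T_{\lambda(t),\gamma(t)}$ is $\tfrac{\lambda t^{2}}{2}$-cocoercive at $\overline{x}$; combined with the factor $t$ in front of $\dot{x}$ and $T$ inside the square, this is exactly what produces a quadratic form in $(t\dot{x},\,tT)$ with a strictly positive coefficient precisely under $\lambda>\tfrac{2}{(\alpha-1)^{2}}$.

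The second step is to differentiate $\mathcal{E}$, substitute the ODE (Split-DIN-AVD) to eliminate $\ddot{x}$, and invoke the cocoercivity inequality $\langle T_{\lambda(t),\gamma(t)}(x(t)),x(t)-\overline{x}\rangle\geq \tfrac{\lambda(t)}{2}\|T_{\lambda(t),\gamma(t)}(x(t))\|^{2}$. The time-dependence of $T_{\lambda(t),\gamma(t)}$ through $\gamma(\cdot)$ produces an extra drift, but Lemma \ref{lemma1}(iii) bounds it by a constant multiple of $\|\dot{x}\| + \tfrac{|\dot\gamma|}{\gamma}(\|Bx\|+\|x-\overline{x}\|)$, and the assumption $\dot\gamma/\gamma = \mathcal{O}(1/t)$ is exactly what is needed to absorb these error terms into the dissipation. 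The outcome is a Gronwall-type differential inequality whose integration yields boundedness of $\mathcal{E}$ and the integral bounds $\int t\|\dot{x}\|^{2}dt<\infty$ and $\int t^{3}\|T_{\lambda(t),\gamma(t)}(x(t))\|^{2}dt<\infty$. Using the identity
\[
T_{\lambda(t),\gamma(t)}(x(t)) = \frac{\gamma(t)}{\lambda(t)}\bigl[A_{\gamma(t)}\bigl(x(t)-\gamma(t)Bx(t)\bigr)+Bx(t)\bigr],
\]
the latter integral rewrites as the third estimate in (ii). Boundedness of $\mathcal{E}$ gives (i); the bound on $\int t^{3}\|\ddot{x}\|^{2}dt$ is obtained by solving the ODE for $\ddot{x}$ and combining the previous two estimates with Lemma \ref{lemma1}(iii).

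For the pointwise rates in (iii) I would apply the standard fact that a non-negative locally absolutely continuous $g$ with $\int g(t)/t\, dt<\infty$ and $|\dot g|$ bounded by an $L^{1}$-function goes to $0$ at infinity. Applied to $g(t)=t^{2}\|\dot{x}(t)\|^{2}$, using $\int t\|\dot{x}\|^{2}dt<\infty$ and Cauchy--Schwarz on $\int t^{3}\|\ddot{x}\|^{2}dt<\infty$, this yields $\|\dot{x}\|=o(1/t)$; the bound $\|\ddot{x}\|=\mathcal{O}(1/t^{2})$ then comes directly from the ODE. The same lemma applied to $t^{2}\gamma(t)\|T_{\lambda(t),\gamma(t)}(x(t))\|$, rewritten via the displayed identity, produces $\|A_{\gamma(t)}(x-\gamma Bx)+Bx\|=o(1/\gamma(t))$, and the rate on the time-derivative follows from Lemma \ref{lemma1}(iii) combined with the chain rule applied to $\frac{\gamma(t)}{\lambda(t)}$.

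For (iv) I would invoke Opial's lemma. The first Opial condition (existence of $\lim_{t\to\infty}\|x(t)-\overline{x}\|$ for every $\overline{x}\in\zer(A+B)$) should come out of a second, simpler Lyapunov computation run on $h(t)=\tfrac12\|x(t)-\overline{x}\|^{2}$, again exploiting cocoercivity of $T_{\lambda(t),\gamma(t)}$ and the rates from (iii). For the second Opial condition, the estimate $\|A_{\gamma(t)}[x-\gamma Bx]+Bx\|=o(1/\gamma(t))$ together with $\inf\gamma>0,\ \sup\gamma<2\beta$ shows that $y(t):=J_{\gamma(t)A}(x(t)-\gamma(t)Bx(t))$ satisfies $\|y(t)-x(t)\|\to 0$ and $\tfrac{x(t)-y(t)}{\gamma(t)} - Bx(t)+By(t) \in (A+B)(y(t))$; Lipschitz continuity of $B$ makes the right-hand side converge strongly to $0$, and maximal monotonicity of $A+B$ identifies any weak cluster point of $x$ as a zero, finishing the argument. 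The main obstacle throughout is the design of $\mathcal{E}$: getting the correct cross-terms so that the Hessian-damping contribution $\xi\frac{d}{dt}T$ generates a non-negative quadratic form together with the vanishing-damping term, while still absorbing the $\dot\gamma$-perturbation from Lemma \ref{lemma1}(iii). The threshold $\lambda>\tfrac{2}{(\alpha-1)^{2}}$ appears as the sharp condition making the resulting quadratic form non-negative.
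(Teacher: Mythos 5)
Your plan runs the same Lyapunov-style argument as the paper, anchoring on the variable $\tfrac{\alpha-1}{2}(x-\overline{x}) + t(\dot{x}+\xi T_{\lambda(t),\gamma(t)}(x))$ (up to renaming of constants), exploiting the $\tfrac{\lambda(t)}{2}$-cocoercivity from Lemma \ref{lemma1}(i) and the condition $\lambda>\tfrac{2}{(\alpha-1)^2}$ to produce a nonpositive quadratic form, and closing part (iv) via Opial's lemma with $h(t)=\tfrac12\|x(t)-\overline{x}\|^2$. That is the paper's route.

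There is, however, one genuine misconception in your description of the Lyapunov step. You say the time-dependence of $T_{\lambda(t),\gamma(t)}$ through $\gamma(\cdot)$ ``produces an extra drift'' that must be controlled by Lemma \ref{lemma1}(iii) and the hypothesis $\dot\gamma/\gamma = \mathcal{O}(1/t)$. This is not so, provided the anchor variable contains the term $\xi\,t\,T_{\lambda(t),\gamma(t)}(x(t))$ as you intend. Indeed, differentiating $v(t)=a(x(t)-\overline{x})+t\dot{x}(t)+\xi t T_{\lambda(t),\gamma(t)}(x(t))$ gives $\dot{v}=(a+1)\dot{x}+\xi T + t\bigl(\ddot{x}+\xi\tfrac{d}{dt}T\bigr)$, and the equation (Split-DIN-AVD) substitutes the bracket to $-\tfrac{\alpha}{t}\dot{x}-T$, after which \emph{no} $\tfrac{d}{dt}T$ term survives; the resulting $\dot{\mathcal{E}}$ is a quadratic form in $\dot{x}$ and $T$ plus the cross-term with $x-\overline{x}$ handled by cocoercivity. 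So you should substitute the full ODE for $\ddot{x}+\xi\tfrac{d}{dt}T$ rather than for $\ddot{x}$ alone, and then there is nothing to absorb and no circularity (your route would otherwise require a priori bounds on $\|x-\overline{x}\|$ and $\|Bx\|$, which are what the Lyapunov argument is supposed to produce). The condition $\dot\gamma/\gamma=\mathcal{O}(1/t)$ and Lemma \ref{lemma1}(iii) enter \emph{after} the integral estimates have been obtained, to get the pointwise rate $\|\tfrac{d}{dt}(\lambda(t)T_{\lambda(t),\gamma(t)}(x(t)))\|=\mathcal{O}(1/t)$, which in turn yields $\|\tfrac{d}{dt}T_{\lambda(t),\gamma(t)}(x(t))\|=\mathcal{O}(1/t^3)$ and feeds into the estimates on $\ddot{x}$ and on $\tfrac{d}{dt}(A_{\gamma(t)}[\cdot]+B)$.

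Your argument for the second Opial condition differs from the paper's and is a nice alternative. The paper introduces $U_\gamma=\id-J_{\gamma A}\circ(\id-\gamma B)$, extracts a subsequence along which $\gamma(t_n)\to\overline{\gamma}\in(0,2\beta)$, compares $U_{\gamma_n}$ with $U_{\overline\gamma}$, and uses demiclosedness of $\gra U_{\overline\gamma}$. You instead pass through $y(t)=J_{\gamma(t)A}(x(t)-\gamma(t)Bx(t))$, observe $\tfrac{x(t)-y(t)}{\gamma(t)}-Bx(t)+By(t)\in(A+B)(y(t))$, and use demiclosedness of $\gra(A+B)$ (maximally monotone as the sum of $A$ and a Lipschitz full-domain monotone operator). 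This is more direct, works with $A+B$ itself, and avoids the subsequence extraction for $\gamma$; it in fact only requires $\inf\gamma>0$ for this step. Both routes are correct; yours is arguably cleaner here.

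Apart from the mislocated role of the $\dot\gamma$ hypothesis, the proposal is sound and would produce a correct proof once that point is repaired.
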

\begin{proof}
\textbf{Integral estimates and rates.} To develop the analysis, we will fix $\overline{x}\in \zer(A + B)$ and make of use of the Lyapunov function $\mathcal{E} : [t_{0}, +\infty) \to \R\cup\{+\infty\}$ given by
\begin{equation}
    \mathcal{E}(t) := \frac{1}{2}\left\|\frac{\alpha - 1}{2}(x(t) - \overline{x}) + t(\dot{x}(t) + \xi\:T_{\lambda(t), \gamma(t)}(x(t)))\right\|^{2} + \frac{(\alpha - 1)^{2}}{8}\|x(t) - \overline{x}\|^{2}.
\end{equation}
Differentiation of $\mathcal{E}$ with respect to time yields, for every $t\geq t_{0}$,
\begin{equation*}
    \begin{split}
        \dot{\mathcal{E}}(t) = & \ \left\langle \frac{\alpha - 1}{2}(x(t) - \overline{x}) + t\left(\dot{x}(t) + \xi\: T_{\lambda(t), \lambda(t)}(x(t))\right) \right. , \\
        & \ \ \ \left. \frac{\alpha + 1}{2}\dot{x}(t) + \xi\: T_{\lambda(t), \gamma(t)}(x(t)) + t\left(\ddot{x}(t) + \xi\frac{d}{dt}\left(T_{\lambda(t), \gamma(t)}(x(t))\right)\right)\right\rangle \\
        & \ + \frac{(\alpha - 1)^{2}}{4}\langle x(t) - \overline{x}, \dot{x}(t)\rangle.
    \end{split}
\end{equation*}
After reduction and employing $(\ref{mainsystem})$, we get, for every $t\geq t_{0}$,
\begin{equation*}
    \begin{split}
        \dot{\mathcal{E}}(t) = & \ \frac{(\alpha - 1)(\xi - t)}{2}\left\langle x(t) - \overline{x}, T_{\lambda(t), \gamma(t)}(x(t))\right\rangle + \frac{(1 - \alpha)t}{2}\|\dot{x}(t)\|^{2} \\
        & \ + \left(-t^{2} + \frac{\xi(3 - \alpha)t}{2}\right)\left\langle T_{\lambda(t), \gamma(t)}(x(t)), \dot{x}(t)\right\rangle + \xi(\xi - t)t\left\|T_{\lambda(t), \gamma(t)}(x(t))\right\|^{2}. 
    \end{split}
\end{equation*}
Now, by Lemma \ref{lemma1}(i), we know that $T_{\lambda(t), \gamma(t)}$ is $\frac{\lambda(t)}{2}$-cocoercive for every $t\geq t_{0}$. Using this on the first summand of the right hand side of the previous inequality yields, for $t\geq t_{1} = \max\{\xi, t_{0}\}$, 
\begin{equation}\label{aux5}
    \begin{split}
        \dot{\mathcal{E}}(t) \leq & \ \frac{(1 - \alpha)t}{2}\|\dot{x}(t)\|^{2} + \left(-t^{2} + \frac{\xi(3 - \alpha)t}{2}\right)\left\langle T_{\lambda(t), \gamma(t)}(x(t)), \dot{x}(t)\right\rangle \\
        & \ + \left(\frac{(\alpha - 1)(\xi - t)\lambda(t)}{4} + \xi(\xi - t)t\right)\left\|T_{\lambda(t), \gamma(t)}(x(t))\right\|^{2}.
    \end{split}
\end{equation}
Now, since $\lambda > \frac{2}{(\alpha - 1)^{2}}$, we can choose $\epsilon > 0$ such that 
\begin{equation}\label{aux6}
0 < \epsilon < \alpha - 1 - \sqrt{\frac{2}{\lambda}} < \alpha - 1.
\end{equation}
From $(\ref{aux5})$ we get, for every $t\geq t_{1}$,
\begin{equation}\label{aux7}
    \begin{split}
        & \ \dot{\mathcal{E}}(t) \, +\,  \frac{\epsilon}{2} t\|\dot{x}(t)\|^{2} + \frac{\epsilon}{4} t\lambda(t)\left\|T_{\lambda(t), \gamma(t)}(x(t))\right\|^{2} \\
        \leq & \ \left(\frac{1 - \alpha}{2} + \frac{\epsilon}{2}\right)t\|\dot{x}(t)\|^{2} + \left(-t^{2} + \frac{\xi(3 - \alpha)t}{2}\right)\left\langle T_{\lambda(t), \gamma(t)}(x(t)), \dot{x}(t)\right\rangle \\
        & \  +\left( \left(\frac{(\alpha - 1)(\xi - t)}{2} + \frac{\epsilon}{2}t\right)\frac{\lambda(t)}{2} + \xi(\xi - t)t\right)\left\|T_{\lambda(t), \gamma(t)}(x(t))\right\|^{2}.
    \end{split}
\end{equation}
By $(\ref{aux6})$ and the definition of $\lambda(t)$, we know that $\frac{1 - \alpha}{2} + \frac{\epsilon}{2} < 0$, and 
\[
\left( \left(\frac{(\alpha - 1)(\xi - t)}{2} + \frac{\epsilon}{2}t\right)\frac{\lambda(t)}{2} + \xi(\xi - t)t\right) = \underbrace{\left(\frac{1 - \alpha}{2} + \frac{\epsilon}{2}\right)}_{< 0}\frac{\lambda}{2} t^{3} + \mathcal{O}(t^{2}),
\]
so we can find $t_{2}\geq t_{1}$ such that for every $t\geq t_{2}$ the previous expression becomes nonpositive. According to Lemma \ref{A3}, the right hand side of $(\ref{aux7})$ is nonpositive whenever
\[
R(t) := \left(-t^{2} + \frac{\xi(3 - \alpha)t}{2}\right)^{2} - 4\left(\frac{1 - \alpha}{2} + \frac{\epsilon}{2}\right)t\left(\left(\frac{(\alpha - 1)(\xi - t)}{2} + \frac{\epsilon}{2}t\right)\frac{\lambda(t)}{2} + \xi(\xi - t)t\right) \leq 0. 
\]
This quantity can be rewritten as
\[
R(t) = \left(1 + 4\left(\frac{1 - \alpha}{2} + \frac{\epsilon}{2}\right)\left(\frac{\alpha - 1}{2} - \frac{\epsilon}{2}\right)\frac{\lambda}{2}\right)t^{4} + \mathcal{O}(t^{3}) \quad \text{as} \quad t\to +\infty.
\]
Since $\epsilon < \alpha - 1 - \sqrt{\frac{2}{\lambda}}$, we have $\frac{\lambda}{2} > \frac{1}{(\alpha - 1 - \epsilon)^{2}}$. Hence, 
\[
1 + 4\left(\frac{1 - \alpha}{2} + \frac{\epsilon}{2}\right)\left(\frac{\alpha - 1}{2} - \frac{\epsilon}{2}\right)\frac{\lambda}{2} = 1 - (\alpha - 1 - \epsilon)^{2}\frac{\lambda}{2} < 0.
\]
This means we can find $t_{3}\geq t_{2}$ such that for every $t\geq t_{3}$ we have $R(t) \leq 0$, that is, for every $t\geq t_{3}$ we have
\begin{equation}\label{aux8}
   \dot{\mathcal{E}}(t) + \frac{\epsilon}{2} t\|\dot{x}(t)\|^{2} + \frac{\epsilon}{4} t\lambda(t)\left\|T_{\lambda(t), \gamma(t)}(x(t))\right\|^{2} \leq 0.
\end{equation}
Now, integrating $(\ref{aux8})$ from $t_{3}$ to $t$ we obtain
\begin{equation}\label{aux9}
    \mathcal{E}(t) + \frac{\epsilon}{2}\int_{t_{3}}^{t}s\|\dot{x}(s)\|^{2}ds + \frac{\epsilon}{4}\lambda\int_{t_{3}}^{t}s^{3}\left\|T_{\lambda(s), \gamma(s)}(x(s))\right\|^{2}ds \leq \mathcal{E}(t_{3}).
\end{equation}
From $(\ref{aux8})$ and the form of $\mathcal{E}$ we immediately obtain 
\begin{align}
    &t\mapsto \|x(t) - \overline{x}\| \:\:\text{is bounded}, \label{aux10}\\
    &\int_{t_{0}}^{+\infty}t\|\dot{x}(t)\|^{2}dt < +\infty, \label{aux11}\\
    &\int_{t_{0}}^{+\infty}t^{3}\left\|T_{\lambda(t), \gamma(t)}(x(t))\right\|^{2}dt < +\infty, \label{aux12} \\
    &\sup_{t\geq t_{0}}\left\|\left(\frac{\alpha - 1}{2}\right)(x(t) - \overline{x}) + t\left(\dot{x}(t) + \xi \:T_{\lambda(t), \gamma(t)}(x(t))\right)\right\| < +\infty. \label{aux13}
\end{align}
From Lemma \ref{lemma1}(i), we know that for every $t\geq t_{0}$ the operator $T_{\lambda(t), \gamma(t)}$ is $\frac{2}{\lambda(t)}$-Lipschitz continuous, which gives, for every $t\geq t_{0}$, 
\[
\left\|T_{\lambda(t), \gamma(t)}(x(t))\right\| = \left\|T_{\lambda(t), \gamma(t)}(x(t)) - T_{\lambda(t), \gamma(t)}(\overline{x})\right\| \leq \frac{2}{\lambda(t)}\|x(t) - \overline{x}\|.
\]
Thus, from $(\ref{aux10})$ and recalling that $\lambda(t) = \lambda t^{2}$ we arrive at
\begin{equation}\label{aux14}
    \left\|T_{\lambda(t), \gamma(t)}(x(t))\right\| = \mathcal{O}\left(\frac{1}{t^{2}}\right) \quad \text{as}\quad t\to +\infty.
\end{equation}
By combining $(\ref{aux10})$, $(\ref{aux13})$ and $(\ref{aux14})$ we obtain $\sup_{t\geq t_{0}}t\|\dot{x}(t)\| < +\infty$ and therefore
\begin{equation}\label{aux15}
    \|\dot{x}(t)\| = \mathcal{O}\left(\frac{1}{t}\right) \quad \text{as} \quad t\to +\infty.
\end{equation}
From Lemma \ref{lemma1}, $(\ref{aux10})$, $(\ref{aux15})$ and the fact that $B$ is $\frac{1}{\beta}$-Lipschitz continuous we deduce that, as $t\to +\infty$,
\begin{equation}\label{aux16}
\left\|\frac{d}{dt}\lambda(t)T_{\lambda(t), \gamma(t)}(x(t))\right\| \leq 4\|\dot{x}(t)\| + 4\beta\frac{|\dot{\gamma}(t)|}{\gamma(t)}\|B(x(t))\| + 2\frac{|\dot{\gamma}(t)|}{\gamma(t)}\|x(t) - \overline{x}\| = \mathcal{O}\left(\frac{1}{t}\right).
\end{equation}
On the other hand, for every $t\geq t_{0}$ we have 
\begin{equation}\label{aux17}
\left\|\frac{d}{dt}\lambda(t)T_{\lambda(t), \gamma(t)}(x(t))\right\| = \left\|\dot{\lambda}(t)T_{\lambda(t), \gamma(t)}(x(t)) + \lambda(t)\frac{d}{dt}T_{\lambda(t), \gamma(t)}(x(t))\right\|,
\end{equation}
so by combining $(\ref{aux14})$, $(\ref{aux16})$, $(\ref{aux17})$ and the fact that $\dot{\lambda}(t) = 2\lambda t$ we arrive at
\[
\left\|\lambda(t)\frac{d}{dt}T_{\lambda(t), \gamma(t)}(x(t))\right\| \leq \underbrace{\left\|\frac{d}{dt}\lambda(t)T_{\lambda(t), \gamma(t)}(x(t))\right\|}_{\mathcal{O}\left(\frac{1}{t}\right)} + \dot{\lambda}(t)\underbrace{\left\|T_{\lambda(t), \gamma(t)}(x(t))\right\|}_{\mathcal{O}\left(\frac{1}{t^{2}}\right)} = \mathcal{O}\left(\frac{1}{t}\right) \quad\text{as}\quad t\to +\infty,
\]
which yields 
\begin{equation}\label{aux18}
\left\|\frac{d}{dt}T_{\lambda(t), \gamma(t)}(x(t))\right\| = \frac{1}{\lambda(t)}\mathcal{O}\left(\frac{1}{t}\right) = \mathcal{O}\left(\frac{1}{t^{3}}\right) \quad \text{as} \quad t\to +\infty.
\end{equation}
Let us now improve $(\ref{aux14})$ and show that 
\begin{equation}\label{fast decay of T}
    \left\|T_{\lambda(t), \gamma(t)}(x(t))\right\| = o\left(\frac{1}{t^{2}}\right) \quad \text{as} \quad t\to +\infty.
\end{equation}
According to $(\ref{aux14})$ and $(\ref{aux16})$ there exists a constant $K > 0$ such that for every $t\geq t_{0}$ it holds
\begin{align*}
    \left|\frac{d}{dt}\left\|\lambda(t)T_{\lambda(t), \gamma(t)}(x(t))\right\|^{4}\right| &= \left|4\left\|\lambda(t)T_{\lambda(t), \gamma(t)}(x(t))\right\|^{2} \left\langle \lambda(t)T_{\lambda(t), \gamma(t)}(x(t)), \frac{d}{dt}\lambda(t)T_{\lambda(t), \gamma(t)}(x(t))\right\rangle\right| \\
    &\leq 4 \left\|\lambda(t)T_{\lambda(t), \gamma(t)}(x(t))\right\|^{2} \left\|\lambda(t)T_{\lambda(t), \gamma(t)}(x(t))\right\|\left\|\frac{d}{dt}\lambda(t)T_{\lambda(t), \gamma(t)}(x(t))\right\| \\
    &\leq \frac{4K}{t}\left\|\lambda(t)T_{\lambda(t), \gamma(t)}(x(t))\right\|^{2}.
\end{align*}
By $(\ref{aux12})$, the right hand side belongs to $L^{1}([t_{0}, +\infty), \R)$, so we get
\begin{equation*}
    \frac{d}{dt}\left\|\lambda(t)T_{\lambda(t), \gamma(t)}(x(t))\right\|^{4} \in L^{1}([t_{0}, +\infty), \R), 
\end{equation*}
hence the limit 
\[
\lim_{t\to +\infty}\left\|\lambda(t)T_{\lambda(t), \gamma(t)}(x(t))\right\|^{4}
\]
exists. Obviously, this implies the existence of $L:= \lim_{t\to +\infty}\left\|\lambda(t)T_{\lambda(t), \gamma(t))}(x(t))\right\|^{2}$. By using $(\ref{aux12})$ again we come to 
\[
\int_{t_{0}}^{+\infty}\frac{1}{t}\left\|\lambda(t)T_{\lambda(t), \gamma(t)}(x(t))\right\|^{2}dt = \lambda^{2}\int_{t_{0}}^{+\infty}t^{3}\left\|T_{\lambda(t), \gamma(t)}(x(t))\right\|^{2}dt < +\infty,
\]
and so we must have $L = 0$, which gives
\begin{equation}\label{aux20}
    \left\|T_{\lambda(t), \gamma(t)}(x(t))\right\| = o\left(\frac{1}{t^{2}}\right) \quad \text{as} \quad t\to +\infty.
\end{equation}
By combining $(\ref{mainsystem})$, $(\ref{aux14})$, $(\ref{aux15})$ and $(\ref{aux18})$ we obtain, as $t\to +\infty$,
\begin{align*}
\|\ddot{x}(t)\| &= \left\|-\frac{\alpha}{t}\dot{x}(t) - \xi\frac{d}{dt}T_{\lambda(t), \gamma(t)}(x(t)) - T_{\lambda(t), \gamma(t)}(x(t))\right\| \\
&\leq \frac{\alpha}{t}\underbrace{\|\dot{x}(t)\|}_{\mathcal{O}\left(\frac{1}{t}\right)} + \xi\underbrace{\left\|\frac{d}{dt}T_{\lambda(t), \gamma(t)}(x(t))\right\|}_{\mathcal{O}\left(\frac{1}{t^{3}}\right)} + \underbrace{\left\|T_{\lambda(t), \gamma(t)}(x(t))\right\|}_{\mathcal{O}\left(\frac{1}{t^{2}}\right)} = \mathcal{O}\left(\frac{1}{t^{2}}\right).
\end{align*}
Moreover, by using the well-known inequality $\|a + b + c\|^{2} \leq 3\|a\|^{2} + 3\|b\|^{2} + 3\|c\|^{2}$ for every $a, b, c\in\mathcal{H}$, for every $t\geq t_{0}$ it holds
\begin{align*}
t^{3}\|\ddot{x}(t)\|^{2} &\leq t^{3}\left\|- \frac{\alpha}{t}\dot{x}(t) - \xi\frac{d}{dt}T_{\lambda(t), \gamma(t)}(x(t)) - T_{\lambda(t), \gamma(t)}(x(t))\right\|^{2} \\
&\leq 3\alpha t\|\dot{x}(t)\|^{2} + 3\xi^{2}t^{3}\left\|\frac{d}{dt}T_{\lambda(t), \gamma(t)}(x(t))\right\|^{2} + 3t^{3}\left\|T_{\lambda(t), \gamma(t)}(x(t))\right\|^{2}.
\end{align*}
From $(\ref{aux11})$, $(\ref{aux18})$ and $(\ref{aux12})$ it follows
\begin{equation}\label{aux21}
    \int_{t_{0}}^{+\infty}t^{3}\|\ddot{x}(t)\|^{2}dt < +\infty.
\end{equation}
To see that $\|\dot{x}(t)\| = o\left(\frac{1}{t}\right)$ as $t\to +\infty$, we write, for every $t\geq t_{0}$,
\begin{align*}
    \frac{d}{dt} \left(t^{2}\|\dot{x}(t)\|^{2} \right) = 2t\|\dot{x}(t)\|^{2} + 2t^{2}\langle \dot{x}(t), \ddot{x}(t)\rangle \leq 3t\|\dot{x}(t)\|^{2} + t^{3}\|\ddot{x}(t)\|^{2}.
\end{align*}
From $(\ref{aux11})$ and $(\ref{aux21})$ we deduce that the left hand side belongs to $L^{1}([t_{0}, +\infty), \R)$, from which we infer that the limit $\lim_{t\to +\infty}t^{2}\|\dot{x}(t)\|^{2}$ exists. Using $(\ref{aux11})$ again, we get 
\[
\int_{t_{0}}^{+\infty}\frac{1}{t}\left(t^{2}\|\dot{x}(t)\|^{2}\right)dt = \int_{t_{0}}^{+\infty}t\|\dot{x}(t)\|^{2}dt < +\infty,
\]
from which we finally deduce $\lim_{t\to +\infty}t^{2}\|\dot{x}(t)\|^{2} = 0$, therefore 
\begin{equation}\label{fast decay of x'}
\|\dot{x}(t)\| = o\left(\frac{1}{t}\right) \quad \text{as} \quad t\to +\infty.
\end{equation}

Notice that we can write for every $t\geq t_{0}$
\[
T_{\lambda(t), \gamma(t)} = \frac{1}{\lambda(t)}\Big[\id - J_{\gamma(t)A}(\id - \gamma(t)B)\Big] = \frac{\gamma(t)}{\lambda(t)}\left(A_{\gamma(t)}\Big[x(t) - \gamma(t)Bx(t)\Big] + Bx(t)\right).
\]
Hence, multiplying both sides of $(\ref{aux20})$ by $\frac{\lambda(t)}{\gamma(t)}$ and remembering the definition of $\lambda(t)$ we obtain
\begin{equation}\label{aux65}
\left\|A_{\gamma(t)}\Big[x(t) - \gamma(t)Bx(t)\Big] + Bx(t)\right\| = o\left(\frac{1}{\gamma(t)}\right) \quad \text{as} \quad t\to +\infty.
\end{equation}
For every $t\geq t_{0}$, we have
\begin{align*}
    \frac{d}{dt}T_{\lambda(t), \gamma(t)}(x(t)) = &\: \frac{d}{dt}\left(\frac{\gamma(t)}{\lambda(t)}\right)\left(A_{\gamma(t)}\Big[x(t) - \gamma(t)Bx(t)\Big] + Bx(t)\right) \\
    &+ \frac{\gamma(t)}{\lambda(t)}\frac{d}{dt}\left(A_{\gamma(t)}\Big[x(t) - \gamma(t)Bx(t)\Big] + Bx(t)\right).
\end{align*}
Therefore, by using $(\ref{aux18})$ and $(\ref{aux65})$, and recalling that $\lambda(t) = \lambda t^{2}$, we obtain
\[
\left\|\frac{d}{dt}\left(A_{\gamma(t)}\Big[x(t) - \gamma(t)Bx(t)\Big] + Bx(t)\right)\right\| = \mathcal{O}\left(\frac{1}{t\gamma(t)}\right) + o\left(\frac{t^{2}\left|\frac{d}{dt}\frac{\gamma(t)}{\lambda(t)}\right|}{\gamma^{2}(t)}\right) \quad \text{as} \quad t\to +\infty.
\]

The fact that $\|\ddot{x}(t)\| = \mathcal{O}\left(\frac{1}{t^{2}}\right)$ as $t\to +\infty$ comes from $(\ref{mainsystem})$, $(\ref{fast decay of x'})$, $(\ref{aux18})$ and $(\ref{fast decay of T})$.  

\noindent\textbf{Weak convergence of the trajectories.} Let $\overline{x}\in\zer(A + B)$. We will work with the energy function $h :[t_{0}, +\infty)\to \R$ given by
\[
h(t) := \frac{1}{2}\|x(t) - \overline{x}\|^{2}.
\]
For every $t\geq t_{0}$, we have
\begin{equation}\label{aux22}
    \dot{h}(t) = \langle x(t) - \overline{x}, \dot{x}(t)\rangle, \quad \ddot{h}(t) = \langle x(t) - \overline{x}, \ddot{x}(t)\rangle + \|\dot{x}(t)\|^{2}.
\end{equation}
Combining $(\ref{mainsystem})$ and $(\ref{aux22})$ gives us, for every $t\geq t_{0}$,
\begin{equation*}
    \ddot{h}(t) + \frac{\alpha}{t}\dot{h}(t) + \left\langle T_{\lambda(t), \gamma(t)}(x(t)), x(t) - \overline{x}\right\rangle = \|\dot{x}(t)\|^{2} + \left\langle -\xi\frac{d}{dt}T_{\lambda(t), \gamma(t)}(x(t)), x(t) - \overline{x}\right\rangle.
\end{equation*}
By using the $\frac{\lambda(t)}{2}$-cocoercitivity of $T_{\lambda(t), \gamma(t)}$ on the left hand side, Cauchy-Schwarz on the right hand side and multiplying both sides by $t$, the previous inequality entails, for every $t\geq t_{0}$,
\begin{equation*}
    t\ddot{h}(t) + \alpha\dot{h}(t) +  t \frac{\lambda(t)}{2}\left\|T_{\lambda(t), \gamma(t)}(x(t))\right\| \leq t\|\dot{x}(t)\|^{2} + \xi t \left\|\frac{d}{dt}T_{\lambda(t), \gamma(t)}(x(t))\right\|\|x(t) - \overline{x}\| \quad \forall t\geq t_{0}.
\end{equation*}
Now, putting $(\ref{aux10})$, $(\ref{aux11})$ and $(\ref{aux18})$ together results in 
\[
k(t) := t\|\dot{x}(t)\|^{2} + \xi t \left\|\frac{d}{dt}T_{\lambda(t), \gamma(t)}(x(t))\right\|\|x(t) - \overline{x}\| \in L^{1}([t_{0}, +\infty), \R).
\]
Now apply Lemma \ref{A2} with $\theta(t):= t \frac{\lambda(t)}{2}\left\|T_{\lambda(t), \gamma(t)}(x(t))\right\|$ for every $t\geq t_{0}$ to deduce that the limit
\[
\lim_{t\to +\infty}h(t)
\]
exists, which fulfills the first condition of Opial's Lemma \ref{opial lemma}.

Let us now move on to the second condition. Suppose $\widehat{x}$ is a weak sequential cluster point of $t\mapsto x(t)$, that is, there exists a sequence $(t_{n})_{n\in\N}\subseteq [t_{0}, +\infty)$ such that $t_{n}\to +\infty$ and $x_{n} := x(t_{n})$ converges weakly to $\widehat{x}$ as $n\to +\infty$. Define
\[
U_{\gamma} := \id - J_{\gamma A}\circ(\id - \gamma B).
\]
According to $(\ref{aux20})$, we have $U_{\gamma(t)}(x(t)) = \lambda(t)T_{\lambda(t), \gamma(t)}(x(t))\to 0$ as $t\to +\infty$. Now, since $\gamma(t)\in[\delta, 2\beta - \delta]$ for all $t\geq t_{0}$ for some $\delta > 0$, we can extract a subsequence $(\gamma(t_{n_{k}}))_{k\in\N}$ such that $\gamma(t_{n_{k}})\to \overline{\gamma}\in (0, 2\beta)$ as $k\to +\infty$. We may assume without loss of generality then that $\gamma_{n} := \gamma(t_{n})\to \overline{\gamma}$ as $n\to +\infty$. We now have for every $n \in \N$
\begin{align*}
    \|U_{\gamma_{n}}(x_{n}) - U_{\overline{\gamma}}(x_{n})\| = & \ \|J_{\gamma_{n} A}(x_{n} - \gamma_{n}B(x_{n})) - J_{\overline{\gamma}A}(x_{n} - \overline{\gamma}B(x_{n}))\| \\
    = & \ \|J_{\gamma_{n}A}(x_{n} - \gamma_{n}B(x_{n})) - J_{\gamma_{n}A}(x_{n} - \overline{\gamma}B(x_{n}))\| \\
    & + \|J_{\gamma_{n}A}(x_{n} - \overline{\gamma}B(x_{n})) - J_{\overline{\gamma}A}(x_{n} - \overline{\gamma}B(x_{n}))\| \\
    \leq & \ |\overline{\gamma} - \gamma_{n}|\|B(x_{n})\| + |\overline{\gamma} - \gamma_{n}|\|A_{\overline{\gamma}}(x_{n} - \overline{\gamma}B(x_{n}))\|.
\end{align*}
Now, since every weakly convergent sequence is bounded and the operators $B$ and $A_{\overline{\gamma}}$ are Lipschitz-continuous we deduce that the right-hand side of the previous inequality approaches zero as $n\to +\infty$, therefore getting
\[
U_{\overline{\gamma}}(x_{n}) = U_{\gamma_{n}}(x_{n}) + \big(U_{\overline{\gamma}}(x_{n}) - U_{\gamma_{n}}(x_{n})\big) \to 0
\]
as $n\to +\infty$. Now, from the proof of part (i) of Lemma \ref{lemma1}, we know that $U_{\overline{\gamma}}$ is $\frac{4\beta - \overline{\gamma}}{4\beta}$-cocoercive, thus monotone and Lipschitz continuous and therefore maximally monotone. Summarizing, we have
\begin{enumerate}
    \item $U_{\overline{\gamma}}$ is maximally monotone and thus its graph is closed in the weak$\times$strong topology of $\mathcal{H} \times\mathcal{H}$ (see \cite[Proposition 20.38(ii)]{BC}), 
    \item $x_{n}$ converges weakly to $\widehat{x}$ and $U_{\overline{\gamma}}(x_{n})\to 0$ as $n\to +\infty$,
\end{enumerate}
which allows us to conclude that $U_{\overline{\gamma}}(\widehat{x}) = 0$, and gives finally $\widehat{x}\in\zer(A + B)$. Now we just invoke Opial's Lemma to achieve that $x(t)$ converges weakly to $\overline{x}$ as $t\to +\infty$ for some $\overline{x}\in\zer(A + B)$.
\end{proof}

In the following subsections, we explore the particular cases $B = 0$ and $A = 0$, and we will show improvements with respect to previous results from the literature addressing continuous time approaches to monotone inclusions. 

\subsection{The case $B = 0$}
If we let $B = 0$ in the (Split-DIN-AVD) system $(\ref{mainsystem})$, then, attached to the monotone inclusion problem 
$$\mbox{find} \ x \in \mathcal{H} \ \mbox{such that} \ 0 \in A(x),$$ 
we obtain the dynamics 
\begin{equation}\label{system3}
    \ddot{x}(t) + \frac{\alpha}{t}\dot{x}(t) + \xi \frac{d}{dt}\left(A_{\lambda(t), \gamma(t)}(x(t)\right) + A_{\lambda(t), \gamma(t)}(x(t)) = 0,
\end{equation}
where 
\[
A_{\lambda, \gamma}(x) = \frac{1}{\lambda}(\id - J_{\gamma A}).
\]
We can state the following theorem.
\begin{theorem}\label{TheoremCaseBzero}
Let $A : \mathcal{H}\to 2^{\mathcal{H}}$ be a maximally monotone operator such that $\zer A\neq \emptyset$. Assume that $\alpha > 1$, $\xi\geq 0$, $\lambda(t) = \lambda t^{2}$ for $\lambda > \frac{1}{(\alpha - 1)^{2}}$ and all $t\geq t_{0}$, and that $\gamma: [t_{0}, +\infty)\to (0, +\infty)$ is a differentiable function that satisfies $\frac{|\dot{\gamma}(t)|}{\gamma(t)} = \mathcal{O}\left(\frac{1}{t}\right)$ as $t\to +\infty$. Then, for a solution $x : [t_{0}, +\infty) \to \mathcal{H}$ to $(\ref{system3})$, the following statements hold: 
\begin{enumerate}[(i)]
    \item $x$ is bounded.
    \item We have the estimates
    \[
    \int_{t_{0}}^{+\infty}t\|\dot{x}(t)\|^{2}dt < +\infty, \quad \int_{t_{0}}^{+\infty}t^{3}\|\ddot{x}(t)\|^{2}dt < +\infty,\quad \int_{t_{0}}^{+\infty}\frac{\gamma^{2}(t)}{t}\left\|A_{\gamma(t)}(x(t))\right\|^{2}dt < +\infty.
    \]
    \item We have the convergence rates
    \begin{align*}
   & \|\dot{x}(t)\| = o\left(\frac{1}{t}\right),  \ \|\ddot{x}(t)\| = \mathcal{O}\left(\frac{1}{t^{2}}\right), \\
    & \left\|A_{\gamma(t)}(x(t))\right\| = o\left(\frac{1}{\gamma(t)}\right), \ \left\|\frac{d}{dt}A_{\gamma(t)}(x(t))\right\| = \mathcal{O}\left(\frac{1}{t\gamma(t)}\right) + o\left(\frac{t^{2}\left|\frac{d}{dt}\frac{\gamma(t)}{\lambda(t)}\right|}{\gamma^{2}(t)}\right)
    \end{align*}
    as $t\to +\infty$.
    \item If $0 < \inf_{t\geq t_{0}}\gamma(t)$, then $x(t)$ converges weakly to an element of $\zer A$ as $t \rightarrow +\infty$.
\end{enumerate}
\end{theorem}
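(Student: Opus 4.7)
Our plan is to replay the proof of Theorem \ref{maintheorem} after making one sharpening: when $B=0$, the operator
\[
A_{\lambda,\gamma} = \tfrac{1}{\lambda}(\id - J_{\gamma A}) = \tfrac{\gamma}{\lambda}\, A_{\gamma}
\]
inherits the $\gamma$-cocoercivity of the Yosida regularization $A_{\gamma}$, and a short computation shows that $A_{\lambda, \gamma}$ is actually $\lambda$-cocoercive -- twice as strong as the $\tfrac{\lambda}{2}$-cocoercivity furnished by Lemma \ref{lemma1}(i) in the presence of $B$. This improvement is the sole reason the threshold on $\lambda$ relaxes from $\lambda > 2/(\alpha-1)^{2}$ in Theorem \ref{maintheorem} down to $\lambda > 1/(\alpha-1)^{2}$ here, and it also explains why there is no longer a restriction of the form $\gamma(t) \in (0, 2\beta)$. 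Note in passing that one cannot recover the statement by invoking Theorem \ref{maintheorem} with a large $\beta$, since the threshold on $\lambda$ there does not depend on $\beta$.

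The first step is to use the same Lyapunov function
\[
\mathcal{E}(t) = \tfrac{1}{2}\left\|\tfrac{\alpha-1}{2}(x(t) - \overline{x}) + t\bigl(\dot{x}(t) + \xi\, A_{\lambda(t),\gamma(t)}(x(t))\bigr)\right\|^{2} + \tfrac{(\alpha-1)^{2}}{8}\|x(t) - \overline{x}\|^{2},
\]
differentiate it, and substitute $(\ref{system3})$. Applying the $\lambda(t)$-cocoercivity of $A_{\lambda(t),\gamma(t)}$ in the cross term $\langle x(t) - \overline{x}, A_{\lambda(t),\gamma(t)}(x(t))\rangle$ and introducing an absorption parameter $\epsilon > 0$, the computation runs exactly as in Theorem \ref{maintheorem}, with the sharper cocoercivity constant propagating through the estimates. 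The discriminant condition on the resulting indefinite quadratic form in $(\dot{x}(t), A_{\lambda(t),\gamma(t)}(x(t)))$ ultimately reads $1 - (\alpha-1-\epsilon)^{2}\lambda < 0$, and this is precisely where the hypothesis $\lambda > 1/(\alpha-1)^{2}$ enters, allowing a sufficiently small $\epsilon > 0$ to be picked. This calibration is the only place where real care is required. Integration then yields the boundedness of $\mathcal{E}$ together with the two integral bounds $\int_{t_{0}}^{+\infty} t\|\dot{x}(t)\|^{2}\,dt < +\infty$ and $\int_{t_{0}}^{+\infty} t^{3}\|A_{\lambda(t),\gamma(t)}(x(t))\|^{2}\,dt < +\infty$.

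From here everything carries over essentially unchanged. Lipschitz continuity of $A_{\lambda(t),\gamma(t)}$ combined with the boundedness of $\|x(t) - \overline{x}\|$ gives $\|A_{\lambda(t),\gamma(t)}(x(t))\| = \mathcal{O}(1/t^{2})$; the same bootstrap on the $L^{1}$-derivative of $\|\lambda(t) A_{\lambda(t),\gamma(t)}(x(t))\|^{4}$ sharpens this to $o(1/t^{2})$; and the identity $A_{\lambda(t),\gamma(t)} = \tfrac{\gamma(t)}{\lambda(t)}\, A_{\gamma(t)}$ transfers the estimate into $\|A_{\gamma(t)}(x(t))\| = o(1/\gamma(t))$. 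The rates on $\|\dot{x}(t)\|$, $\|\ddot{x}(t)\|$, and $\|\tfrac{d}{dt} A_{\gamma(t)}(x(t))\|$, as well as the integral bound on $t^{3}\|\ddot{x}(t)\|^{2}$, all follow by the same chain of arguments; Lemma \ref{lemma1}(iii) here simplifies to $\|\tfrac{d}{dt}(\lambda(t) A_{\lambda(t),\gamma(t)}(x(t)))\| \le 4\|\dot{x}(t)\| + 2\tfrac{|\dot{\gamma}(t)|}{\gamma(t)}\|x(t) - \overline{x}\|$ because the $B$-terms vanish. For weak convergence we invoke Opial's lemma: existence of $\lim_{t \to +\infty}\|x(t) - \overline{x}\|$ comes from the second-order inequality satisfied by $h(t) = \tfrac{1}{2}\|x(t) - \overline{x}\|^{2}$ as in Theorem \ref{maintheorem}, and the demiclosedness step is actually simpler here because only the resolvent appears: if $\gamma_{n} \to \overline{\gamma} \in (0, +\infty)$ along a subsequence, then nonexpansivity of the resolvent gives
\[
\|U_{\gamma_{n}}(x_{n}) - U_{\overline{\gamma}}(x_{n})\| \le |\overline{\gamma} - \gamma_{n}|\,\|A_{\overline{\gamma}}(x_{n})\| \longrightarrow 0,
\]
and maximal monotonicity of $U_{\overline{\gamma}} = \id - J_{\overline{\gamma}A}$ closes the argument. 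The main obstacle is therefore essentially bookkeeping; there is no new idea beyond the cocoercivity sharpening noted at the outset.
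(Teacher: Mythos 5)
Your cocoercivity sharpening is exactly the one the paper uses: since $\id - J_{\gamma A}$ is firmly nonexpansive, $A_{\lambda,\gamma}$ is $\lambda$-cocoercive, the discriminant condition becomes $1 - (\alpha - 1 - \epsilon)^{2}\lambda < 0$, and the threshold relaxes to $\lambda > 1/(\alpha-1)^{2}$. The Lyapunov analysis and the rate bootstrap carry over as you describe. Your observation that one cannot obtain this by letting $\beta \to +\infty$ in Theorem~\ref{maintheorem} is also apt.

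However, your demiclosedness step has a genuine gap. You argue by extracting a convergent subsequence $\gamma_{n} \to \overline{\gamma} \in (0,+\infty)$ and then using the resolvent estimate $\|J_{\gamma_{n}A}(x_{n}) - J_{\overline{\gamma}A}(x_{n})\| \le |\overline{\gamma} - \gamma_{n}|\,\|A_{\overline{\gamma}}(x_{n})\|$. But part (iv) only assumes $\inf_{t\ge t_{0}}\gamma(t) > 0$ and, unlike in Theorem~\ref{maintheorem}, there is no upper bound on $\gamma(t)$ — in fact the interesting regime advertised in Remark~\ref{remark1} is $\gamma(t) = t^{n}$ with large $n$, for which $\gamma_{n} \to +\infty$ along every sequence $t_{n} \to +\infty$ and no finite $\overline{\gamma}$ exists. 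The paper handles exactly this case by a different argument: setting $y(t) := x(t) - J_{\gamma(t)A}(x(t)) = \lambda(t) A_{\lambda(t),\gamma(t)}(x(t)) \to 0$, it reads off from the resolvent definition that $\tfrac{1}{\gamma(t)} y(t) \in A\bigl(x(t) - y(t)\bigr)$, notes that $\tfrac{1}{\gamma(t)} y(t) \to 0$ strongly because $\gamma$ is bounded away from zero and $y(t) \to 0$, and concludes from the demiclosedness of $\gra A$ in the weak$\times$strong topology. This works regardless of whether $\gamma(t)$ tends to infinity, and it is where the modified hypothesis $\inf \gamma > 0$ (with no cap of the form $2\beta$) actually enters. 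You should replace your subsequence extraction with this resolvent-inclusion argument; otherwise part (iv) is not established in the unbounded-$\gamma$ case, which is the very case that makes this theorem an improvement over (DIN-AVD).
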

\begin{proof}
The proof proceeds in the exact same way as the proof of Theorem \ref{maintheorem}. However, a few comments are in order: first of all, now we have $T_{\lambda, \gamma} = \frac{1}{\lambda}(\id - J_{\gamma A}) = A_{\lambda, \gamma}$. Since $J_{\lambda A}$ is firmly nonexpansive, by \cite[Proposition 4.4]{BC} so is $\id - J_{\lambda A}$. In other words, $\id - J_{\gamma A}$ is $1$-cocoercive, therefore $A_{\lambda, \gamma} = \frac{1}{\lambda}(\id - J_{\gamma A})$ is $\lambda$-cocoercive, so now the condition on $\lambda$ becomes $\lambda > \frac{1}{(\alpha - 1)^{2}}$. 

The proof also changes when we verify the second part of the Opial's Lemma, to get weak convergence of the trajectories $t\mapsto x(t)$. This is in order to allow for $\gamma(t)$ not to be necessarily bounded. We do need, however, the assumption $0 < \inf_{t\geq t_{0}}\gamma(t)$. Indeed, from $\|A_{\lambda(t), \gamma(t)}(x(t))\| = o\left(\frac{1}{t^{2}}\right)$ as $t\to +\infty$, we obtain
\[
y(t) := x(t) - J_{\gamma(t)A}x(t) = \lambda(t)A_{\lambda(t), \gamma(t)}(x(t)) \to 0
\]
as $t\to +\infty$. Using the definition of the resolvent, we come to
\[
J_{\gamma(t)A}x(t) = x(t) - y(t) \Leftrightarrow y(t) \in \gamma(t)A(x(t) - y(t)) \Leftrightarrow \frac{1}{\gamma(t)}y(t) \in A(x(t) - y(t)).
\]
for all $t\geq t_{0}$. If $(t_{n})_{n\in\N}\subseteq [t_{0}, +\infty)$ is such that $t_{n}\to +\infty$ and $x(t_{n})$ converges weakly to $\widehat{x}$ as $n\to +\infty$, then the previous inclusion, together with the assumption on $\gamma$ gives
\[
x(t_{n}) - y(t_{n}) \ \mbox{converges weakly to} \ \widehat{x} \quad \text{and}\quad \frac{1}{\gamma(t)}y(t)\to 0 \quad \text{as}\quad n\to +\infty,
\]
and by the closedness of the graph of $A$ in the weak$\times$strong topology of $\mathcal{H} \times\mathcal{H}$, we deduce that $\widehat{x}\in \zer A$.   
\end{proof}
\begin{remark}\label{remark1}
The hypotheses required for $\gamma$ are fulfilled at least by two families of functions. First, take $r\geq 0$ and set $\gamma(t) = e^{t^{-r}}$. Then, we have
\[
\frac{\dot{\gamma}(t)}{\gamma(t)} = \frac{-r t^{-(r + 1)}e^{t^{-r}}}{e^{t^{-r}}} = -\frac{r}{t^{r + 1}} = \mathcal{O}\left(\frac{1}{t}\right) \quad \text{as} \quad t\to +\infty,
\]
and
\[
\gamma(t) = e^{t^{-r}} \geq e^{0} = 1 \quad \forall t\geq 0.
\]

If $\gamma$ is a polynomial of degree $n$ for some $n\in\N$, the conditions are also fulfilled. Assume $\gamma(t) = a_{n}t^{n} + a_{n - 1}t^{n - 1} + \cdots + a_{0}$ for all $t\geq t_{0}$, for some $a_{i}\in \R$ for $i\in\{0, \ldots, n\}$ and $a_{n} > 0$. Then, we have
\begin{align*}
t\cdot\frac{\dot{\gamma}(t)}{\gamma(t)} &= t \cdot \frac{n a_{n}t^{n - 1} + (n - 1)a_{n - 1}t^{n - 1} + \cdots + a_{1}}{a_{n}t^{n} + a_{n - 1}t^{n - 1} + \cdots + a_{0}} \\
& \to \frac{na_{n}}{a_{n}} = n \quad \text{as} \quad t\to +\infty,
\end{align*}
so $\frac{\dot{\gamma}(t)}{\gamma(t)} = \mathcal{O}\left(\frac{1}{t}\right)$ as $t\to +\infty$. Since we also have $\gamma(t) \to +\infty$ as $t\to +\infty$, the condition $\inf_{t\geq t_{0}} \gamma(t) > 0$ is fulfilled for large enough $t_{0}$. 

In particular, we can choose $\gamma(t) = \lambda(t) = \lambda t^{2}$, which fulfills $\gamma(t) \geq \lambda t_{0}^{2} > 0$ for any $t\geq t_{0}$ and any $t_{0}$. Since $A_{\lambda, \lambda} = A_{\lambda}$ for $\lambda > 0$, this choice of $\gamma$ allows us to recover the (DIN-AVD) system studied by Attouch and L\'aszl\'o in \cite{AttouchLaszlo}. Notice the way the convergence rates for $A_{\gamma(t)}(x(t))$ and $\frac{d}{dt}A_{\gamma(t)}(x(t))$ exhibited in part (iii) of Theorem \ref{TheoremCaseBzero} depend on $\gamma(t)$. If we set $\gamma(t) = t^{n}$ for every $t\geq t_{0}$ for any natural number $n > 2$, (Split-DIN-AVD) performs from this point of view better than (DIN-AVD) without increasing the complexity of the governing operator. 
\end{remark}

\subsection{The case $A = 0$}
Let us return to (Split-DIN-AVD) dynamics $(\ref{mainsystem})$. Set $A = 0$, and for every $t\geq t_{0}$ take $\gamma(t) = \gamma \in (0, 2\beta)$ and $\eta(t) = \eta t^{2}$ with $\eta = \lambda/\gamma$. Then, associated to the problem 
\[
\mbox{find} \ x \in \mathcal{H} \ \mbox{such that} \ B(x)=0,
\] 
we obtain the system
\begin{equation}\label{system2}
    \ddot{x}(t) + \frac{\alpha}{t}\dot{x}(t) + \xi \frac{d}{dt}\left(\frac{1}{\eta(t)}Bx(t)\right) + \frac{1}{\eta(t)}Bx(t) = 0.
\end{equation}
The conditions $\lambda > \frac{2}{(\alpha - 1)^{2}}$ and $\gamma\in (0, 2\beta)$ imply
\[
\eta = \frac{\lambda}{\gamma} > \frac{2}{\gamma(\alpha - 1)^{2}} > \frac{2}{2\beta(\alpha - 1)^{2}} = \frac{1}{\beta(\alpha - 1)^{2}}.
\]
With the previous observation, we are able to state the following theorem.
\begin{theorem}\label{TheoremCaseAzero}
Let $B: \mathcal{H}\to \mathcal{H}$ be a $\beta$-cocoercive operator for some $\beta > 0$ such that $\zer B\neq \emptyset$. Assume that $\alpha > 1$, $\xi \geq 0$ and $\eta(t) = \eta t^{2}$ for $\eta > \frac{1}{\beta(\alpha - 1)^{2}}$ and all $t\geq t_{0}$. Take $x : [t_{0}, +\infty)\to \mathcal{H}$ a solution to $(\ref{system2})$. Then, the following hold:
\begin{enumerate}[(i)]
    \item $x$ is bounded, and $x(t)$ converges weakly to an element of $\zer B$ as $t \rightarrow +\infty$.
    \item We have the estimates
    \[
    \int_{t_{0}}^{+\infty}t\|\dot{x}(t)\|^{2}dt < +\infty, \quad \int_{t_{0}}^{+\infty}t^{3}\|\ddot{x}(t)\|^{2}dt < +\infty, \quad \int_{t_{0}}^{+\infty}\frac{1}{t}\left\|Bx(t)\right\|^{2}dt < \infty.
    \]
    \item We have the convergence rates
    \begin{equation*}
    \|\dot{x}(t)\| = o\left(\frac{1}{t}\right), \quad \|\ddot{x}(t)\| = \mathcal{O}\left(\frac{1}{t^{2}}\right)
    \end{equation*}
    as well as the limit
    \[
    \|Bx(t)\| \to 0
    \]
    as $t\to +\infty$.
\end{enumerate}
\end{theorem}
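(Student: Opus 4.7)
The plan is to realize \eqref{system2} as an instance of \textnormal{(Split-DIN-AVD)} and then invoke Theorem~\ref{maintheorem}. Since $A = 0$ is maximally monotone with $J_{\gamma A} = \id$ for every $\gamma > 0$, the governing operator of \eqref{mainsystem} simplifies to
\[
T_{\lambda, \gamma}(x) = \tfrac{1}{\lambda}\bigl[\id - (\id - \gamma B)\bigr](x) = \tfrac{\gamma}{\lambda} Bx,
\]
and moreover the Yosida approximation $A_{\gamma}$ is identically zero. Thus, if one chooses $\gamma(t) \equiv \bar{\gamma}$ constant and $\lambda(t) := \eta\bar{\gamma}\,t^{2}$, then $T_{\lambda(t),\gamma(t)}(x(t)) = \tfrac{1}{\eta(t)}Bx(t)$, so \eqref{system2} is precisely the instance of \eqref{mainsystem} attached to these data.

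The next step is to verify that $\bar{\gamma}$ can be chosen so that the hypotheses of Theorem~\ref{maintheorem} are met. The condition $\lambda > \tfrac{2}{(\alpha-1)^{2}}$ there translates into $\eta\bar{\gamma} > \tfrac{2}{(\alpha-1)^{2}}$, i.e.\ $\bar{\gamma} > \tfrac{2}{\eta(\alpha-1)^{2}}$; meanwhile we need $\bar{\gamma} < 2\beta$. These two constraints define a non-empty interval precisely when $\eta > \tfrac{1}{\beta(\alpha-1)^{2}}$, which is our standing assumption, so any $\bar{\gamma} \in \bigl(\tfrac{2}{\eta(\alpha-1)^{2}},\, 2\beta\bigr)$ is admissible. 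Constancy of $\gamma(t)$ renders the remaining requirements $\tfrac{|\dot{\gamma}(t)|}{\gamma(t)} = 0 = \mathcal{O}(1/t)$ and $0 < \inf_{t \geq t_{0}}\gamma(t) = \sup_{t \geq t_{0}}\gamma(t) < 2\beta$ automatic.

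The final step is to translate the four conclusions of Theorem~\ref{maintheorem}. Boundedness of $x$ and the rates $\|\dot{x}(t)\| = o(1/t)$, $\|\ddot{x}(t)\| = \mathcal{O}(1/t^{2})$ transfer verbatim, as do the integrals $\int_{t_{0}}^{+\infty} t\|\dot{x}(t)\|^{2}dt$ and $\int_{t_{0}}^{+\infty} t^{3}\|\ddot{x}(t)\|^{2}dt$. The remaining integral bound $\int_{t_{0}}^{+\infty} \tfrac{\gamma^{2}(t)}{t}\bigl\|A_{\gamma(t)}[x(t) - \gamma(t)Bx(t)] + Bx(t)\bigr\|^{2}dt < +\infty$ collapses via $A_{\bar{\gamma}} \equiv 0$ to $\int_{t_{0}}^{+\infty} \tfrac{1}{t}\|Bx(t)\|^{2}dt < +\infty$, and the sharp rate $\|A_{\gamma(t)}[x - \gamma(t)Bx] + Bx\| = o(1/\gamma(t))$ becomes $\|Bx(t)\| = o(1)$, yielding $Bx(t) \to 0$. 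Weak convergence of $x(t)$ to an element of $\zer(A + B) = \zer B$ is then part (iv) of Theorem~\ref{maintheorem}. The only non-routine check is the parameter-matching in the second paragraph, which is tight at the endpoint $\bar{\gamma} = 2\beta$ and precisely explains the threshold $\eta > \tfrac{1}{\beta(\alpha-1)^{2}}$.
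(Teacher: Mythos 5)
Your proof is correct and takes essentially the same route as the paper: both reduce \eqref{system2} to an instance of \textnormal{(Split-DIN-AVD)} with $A=0$ and a constant $\gamma(t)\equiv\bar\gamma$, then invoke Theorem~\ref{maintheorem}. The only cosmetic difference is that the paper fixes the specific choice $\bar\gamma = 2(\beta-\epsilon)$ near $2\beta$, while you cleanly identify the full admissible interval $\bigl(\tfrac{2}{\eta(\alpha-1)^{2}},\,2\beta\bigr)$ and note that its nonemptiness is exactly the hypothesis $\eta > \tfrac{1}{\beta(\alpha-1)^{2}}$.
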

\begin{proof}
Since $\eta > \frac{1}{\beta(\alpha - 1)^{2}}$, we can find $\epsilon\in (0, \beta)$ such that $\eta > \frac{1}{(\beta - \epsilon)(\alpha - 1)^{2}}$, equivalently, $2(\beta - \epsilon)\eta > \frac{2}{(\alpha - 1)^{2}}$. Since $(\ref{system2})$ is equivalent to (Split-DIN-AVD) with $A = 0$ and parameters $\lambda = 2(\beta - \epsilon)\eta > \frac{1}{(\alpha - 1)^{2}}$ and $\gamma(t) \equiv 2(\beta - \epsilon) \in (0, 2\beta)$, the conclusion follows from Theorem \ref{maintheorem}. 
\end{proof}
\begin{remark}
(a) As we mentioned in the introduction, the dynamical system \eqref{system2} provides a way of finding the zeros of a cocoercive  operator directly through forward evaluations, instead of having to resort to its Moreau envelope when following the approach in \cite{AttouchLaszlo}. 

(b) The dynamics $(\ref{system2})$ bear some resemblance to the system $(\ref{systemBotCsetnek})$ (see also \cite{BotCsetnek}) with $\mu(t) = \frac{\alpha}{t}$ and $\nu(t) = \frac{1}{\eta(t)}$, with an additional Hessian-driven damping term. In our case, since $\eta > \frac{1}{\beta(\alpha - 1)^{2}}$, the parameters satisfy
\[
\dot{\mu}(t) = -\frac{\alpha}{t^{2}} \leq 0, \quad \frac{\mu^{2}(t)}{\nu(t)} = \frac{\alpha^{2}\eta t^{2}}{t^{2}} = \alpha^2 \eta> \frac{1}{\beta} \quad \forall t\geq t_{0}.
\]
However, we have
\[
\dot{\nu}(t) = -\frac{2}{\lambda t^{3}} \leq 0 \quad \forall t\geq t_{0},
\]
so one of the hypotheses which is needed in $(\ref{systemBotCsetnek})$ is not fulfilled, which shows that one cannot address the dynamical system $(\ref{system2})$ as a particular case of it; indeed, for $(\ref{systemBotCsetnek})$ a vanishing damping is not allowed. With our system, we obtain convergence rates for $\dot{x}(t)$ and $\ddot{x}(t)$ as $t\to +\infty$, which are not obtained in \cite{BotCsetnek}.

\end{remark}

\section{Structured convex minimization}\label{sec4}

We can specialize the previous results to the case of convex minimization, and show additionally the convergence of functional values along the generated trajectories to the optimal objective value at a rate that will depend on the choice of $\gamma$. Let $f:\mathcal{H}\to \R\cup\{+\infty\}$ be a proper, convex and lower semicontinuous function, and let $g:\mathcal{H}\to \R$ be a convex and Fr\'echet differentiable function with $L_{\nabla g}$-Lipschitz continuous gradient. Assume that $\argmin_{\mathcal{H}}(f + g)\neq\emptyset$, and consider the minimization problem
\begin{equation}\label{ConvexMinimization}
    \min_{x\in\mathcal{H}} f(x) + g(x).
\end{equation}
Fermat's rule tells us that $\overline x$ is a global minimum of $f + g$ if and only if 
\[
0\in\partial (f + g)(\overline x) = \partial f(\overline x) + \nabla g(\overline x).
\]
Therefore, solving $(\ref{ConvexMinimization})$ is equivalent solving the monotone inclusion $0\in (A + B)(x)$ addressed in the first section, with $A = \partial f$ and $B = \nabla g$. Moreover, recall that if $\nabla g$ is $L_{\nabla g}$-Lipschitz then it is $\frac{1}{L_{\nabla g}}$-cocoercive (Baillon-Haddad's Theorem, see \cite[Corollary 18.17]{BC}). Therefore, associated to the problem $(\ref{ConvexMinimization})$ we have the dynamics

\begin{equation} \label{systemFunctions}
    \ddot{x}(t) + \frac{\alpha}{t}\dot{x}(t) + \xi \frac{d}{dt}\left(\frac{\gamma(t)}{\lambda(t)}\left(\nabla f_{\gamma(t)}(u(t)) + \nabla g(x(t))\right)\right) + \frac{\gamma(t)}{\lambda(t)}\left(\nabla f_{\gamma(t)}(u(t)) + \nabla g(x(t)) \right) = 0,
\end{equation}
where we have denoted $u(t) = x(t) - \gamma(t)\nabla g(x(t))$ for all $t\geq t_{0}$ for convenience.

\begin{theorem}\label{minimizar f + g}
Let $f: \mathcal{H}\to \R\cup\{+\infty\}$ be a proper, convex and lower semicontinuous function, and let $g : \mathcal{H}\to \R$ be a convex and Fr\'echet differentiable function with a $L_{\nabla g}$-Lipschitz continuous gradient such that $\argmin_{\mathcal{H}}(f + g)\neq \emptyset$. Assume that $\alpha > 1$, $\xi \geq 0$, $\lambda(t) = \lambda t^{2}$ for $\lambda > \frac{2}{(\alpha - 1)^{2}}$ and all $t\geq t_{0}$, and that $\gamma : [t_{0}, +\infty)\to \left(0, \frac{2}{L_{\nabla g}}\right)$ is a differentiable function that satisfies $\frac{\dot{\gamma}(t)}{\gamma(t)} = \mathcal{O}(1/t)$ as $t\to +\infty$. Then, for a solution $x : [t_{0}, +\infty) \to \mathcal{H}$ to $(\ref{systemFunctions})$, the following statements hold:
\begin{enumerate}[(i)]
    \item $x$ is bounded.
    \item We have the estimates
    \begin{gather*}
    \int_{t_{0}}^{+\infty}t\|\dot{x}(t)\|^{2}dt < +\infty, \quad \int_{t_{0}}^{+\infty}t^{3}\|\ddot{x}(t)\|^{2}dt < +\infty, \\ 
    \int_{t_{0}}^{+\infty} \frac{\gamma^{2}(t)}{t}\left\|\nabla f_{\gamma(t)}\Big[x(t) - \gamma(t)\nabla g(x(t))\Big] + \nabla g(x(t))\right\|^{2}dt < +\infty.
    \end{gather*}
    \item We have the convergence rates 
    \begin{align*}
    & \|\dot{x}(t)\| = o\left(\frac{1}{t}\right), \ \|\ddot{x}(t)\| = \mathcal{O}\left(\frac{1}{t^{2}}\right), \\
    & \left\|\nabla f_{\gamma(t)}\Big[x(t) - \gamma(t)\nabla g(x(t))\Big] + \nabla g(x(t))\right\| = o\left(\frac{1}{\gamma(t)}\right), \\
   & \left\|\frac{d}{dt}\left(\nabla f_{\gamma(t)}\Big[x(t) - \gamma(t)\nabla g(x(t))\Big] + \nabla g(x(t))\right)\right\| = \mathcal{O}\left(\frac{1}{t\gamma(t)}\right) + o\left(\frac{t^{2}\left|\frac{d}{dt}\frac{\gamma(t)}{\lambda(t)}\right|}{\gamma^{2}(t)}\right)
    \end{align*}
    as $t\to +\infty$.
    \item If $0 < \inf_{t \geq t_{0}}\gamma(t) \leq \sup_{t\geq t_{0}}\gamma(t) < \frac{2}{L_{\nabla g}}$, then $x(t)$ converges converges to a minimizer of $f + g$ as $t \rightarrow +\infty$.
    \item Additionally, if $0 < \gamma(t) \leq \frac{1}{L_{\nabla g}}$ for every $t\geq t_{0}$ and we set $u(t) := x(t) - \gamma(t) \nabla g(x(t))$, then 
    \[
    f\left(\prox_{\gamma(t)f}(u(t))\right) + g\left(\prox_{\gamma(t)f}(u(t))\right) - \min\nolimits_{\mathcal{H}}(f + g) = o\left(\frac{1}{\gamma(t)}\right)
    \]
    as $t\to +\infty$. Moreover, $\left\|\prox_{\gamma(t)f}(u(t)) - x(t)\right\|\to 0$ as $t\to +\infty$.
\end{enumerate}
\end{theorem}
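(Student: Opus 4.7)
Parts (i)--(iv) follow by direct specialisation of Theorem \ref{maintheorem} to $A=\partial f$ and $B=\nabla g$, using Baillon--Haddad to supply $(1/L_{\nabla g})$-cocoercivity of $\nabla g$ and the identity $\nabla f_{\gamma} = (\partial f)_{\gamma}$, so I concentrate on (v). Set $p(t) := \prox_{\gamma(t)f}(u(t))$ and $q(t) := x(t) - p(t)$. Since for $A=\partial f$ and $B=\nabla g$ one has $T_{\lambda,\gamma}(x) = \frac{1}{\lambda}[x - \prox_{\gamma f}(x - \gamma\nabla g(x))]$, it follows that $q(t) = \lambda(t)\,T_{\lambda(t),\gamma(t)}(x(t))$. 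The rate $\|T_{\lambda(t),\gamma(t)}(x(t))\| = o(1/t^{2})$ established inside the proof of Theorem \ref{maintheorem} together with $\lambda(t) = \lambda t^{2}$ gives $\|q(t)\| = o(1) \to 0$, which already settles the second claim $\|\prox_{\gamma(t)f}(u(t)) - x(t)\| \to 0$.

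For the functional estimate, fix $x^{*}\in\argmin_{\mathcal{H}}(f+g)$ and write $h := f+g$. The characterization of the proximal point gives $\frac{u(t)-p(t)}{\gamma(t)} = \frac{q(t)}{\gamma(t)} - \nabla g(x(t)) \in \partial f(p(t))$, so the subgradient inequality for $f$ at $p(t)$ evaluated at $x^{*}$ controls $f(p(t)) - f(x^{*})$ from above. For the smooth part I would use the descent lemma (valid because $\gamma(t)L_{\nabla g} \leq 1$) to bound $g(p(t))$ in terms of $g(x(t))$, $\langle \nabla g(x(t)), p(t)-x(t)\rangle$ and $\frac{1}{2\gamma(t)}\|q(t)\|^{2}$, and then convexity of $g$ at $x(t)$ to bound $g(x(t)) - g(x^{*})$ by $\langle \nabla g(x(t)), x(t)-x^{*}\rangle$. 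Adding the three inequalities, the cross terms in $\nabla g(x(t))$ cancel and, upon expanding $p(t) - x^{*} = (x(t) - x^{*}) - q(t)$, the residual $\|q(t)\|^{2}$ contributions combine into
\[
0 \leq h(p(t)) - h(x^{*}) \leq \frac{\langle q(t),\, x(t)-x^{*}\rangle}{\gamma(t)} - \frac{\|q(t)\|^{2}}{2\gamma(t)}.
\]

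Cauchy--Schwarz and discarding the negative square term yield $h(p(t)) - h(x^{*}) \leq \frac{\|q(t)\|\,\|x(t)-x^{*}\|}{\gamma(t)}$; since $x$ is bounded by part (i) and $\|q(t)\| \to 0$, multiplying through by $\gamma(t)$ gives $\gamma(t)[h(p(t)) - h(x^{*})] \to 0$, i.e.\ $h(p(t)) - h(x^{*}) = o(1/\gamma(t))$, as required. I do not foresee any real obstacle: the whole computation is a textbook forward--backward identity. The only subtlety is the algebraic bookkeeping that makes the $\nabla g(x(t))$ cross terms cancel and arranges the two $\|q(t)\|^{2}$ contributions (from the subgradient inequality and from the descent lemma) to combine with the correct, negative coefficient.
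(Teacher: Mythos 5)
Your proposal is correct and takes essentially the same approach as the paper: parts (i)--(iv) are direct specialisations of Theorem \ref{maintheorem}, and for (v) you arrive at the identical pivotal inequality $h(p(t))-h(x^*)\le \frac{1}{\gamma(t)}\langle q(t),x(t)-x^*\rangle-\frac{1}{2\gamma(t)}\|q(t)\|^2$ which the paper obtains by citing Beck--Teboulle's Lemma~2.3, whereas you unpack that citation by re-deriving it from the subgradient inequality for $f$, the descent lemma for $g$ (valid since $\gamma(t)L_{\nabla g}\le 1$), and convexity of $g$. Both arguments then conclude identically via Cauchy--Schwarz, the boundedness of $x$, and $\|q(t)\|=\lambda(t)\|T_{\lambda(t),\gamma(t)}(x(t))\|=o(1)$.
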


\begin{proof}
Parts (i)-(iv) are a direct consequence of Theorem \ref{maintheorem}. For checking (v), first notice that for all $t\geq t_{0}$ we have
\begin{align}
    T_{\lambda(t), \gamma(t)}(x(t)) &= \frac{1}{\lambda(t)}\Big[\id - J_{\gamma(t)\partial f}\circ(\id - \gamma(t)\nabla g)\Big](x(t)) = \frac{1}{\lambda(t)}\Big[x(t) - \prox_{\gamma(t)f}(u(t))\Big]. \label{aux27} 
\end{align}
Now, let $\overline{x}\in \argmin_{\mathcal{H}}(f+g)$. According to \cite[Lemma 2.3]{FISTA}, for every $t\geq t_{0}$, we have the inequality
\begin{align*}
    &\ f\left(\prox_{\gamma(t)f}(u(t))\right) + g\left(\prox_{\gamma(t)f}(u(t))\right) - \min\nolimits_{\mathcal{H}}(f + g) \\
    \leq & \ f\left(\prox_{\gamma(t)f}(u(t))\right) + g\left(\prox_{\gamma(t)f}(u(t))\right) - f(\overline{x}) - g(\overline{x}) \\
    \leq &\ -\frac{1}{2\gamma(t)}\left\|\prox_{\gamma(t)f}(u(t)) - x(t)\right\|^{2} + \frac{1}{\gamma(t)}\left\langle x(t) - x^{*}, x(t) - \prox_{\gamma(t)f}(u(t))\right\rangle.
\end{align*}
After summing the norm squared term and using the Cauchy-Schwarz inequality, for every $t\geq t_{0}$ we obtain
\begin{align*}
    & \ \frac{1}{2\gamma(t)}\left\|\prox_{\gamma(t)f}(u(t)) - x(t)\right\|^{2} \\
    \leq & \ f\left(\prox_{\gamma(t)f}(u(t))\right) + g\left(\prox_{\gamma(t)f}(u(t))\right) + \frac{1}{2\gamma(t)}\left\|\prox_{\gamma(t)f}(u(t)) - x(t)\right\|^{2} - \min\nolimits_{\mathcal{H}}(f + g) \\
    \leq & \ \left\langle\frac{1}{\gamma(t)}\Big(x(t) - \prox_{\gamma(t)f}(u(t))\Big), x(t) - \overline{x}\right\rangle \leq \left\|\frac{1}{\gamma(t)}\Big(x(t) - \prox_{\gamma(t)f}(u(t))\Big)\right\| \|x(t) - \overline{x}\| \\
    = & \ \frac{\lambda(t)}{\gamma(t)}\left\|T_{\lambda(t), \gamma(t)}(x(t))\right\|\|x(t) - \overline{x}\| \\
    = & \ o\left(\frac{1}{\gamma(t)}\right) \quad \text{as} \quad t\to +\infty,
\end{align*}
which follows as a consequence of $x$ being bounded and $\left\|T_{\lambda(t), \gamma(t)}(x(t))\right\| = o\left(\frac{1}{t^{2}}\right)$ as $t\to +\infty$. 
\end{proof}

\begin{remark}
It is also worth mentioning the system we obtain in the case where $g \equiv 0$, since we also get some improved rates for the objective functional values when we compare (Split-DIN-AVD) to (DIN-AVD) \cite{AttouchLaszlo}. In this case, we have the system
\begin{equation}\label{system minimizar f}
    \ddot{x}(t) + \frac{\alpha}{t} + \xi \frac{d}{dt}\left(\frac{\gamma(t)}{\lambda(t)}\nabla f_{\gamma(t)}(x(t))\right) + \frac{\gamma(t)}{\lambda(t)}\nabla f_{\gamma(t)}(x(t)) = 0
\end{equation}
attached to the convex optimization problem 
$$\min_{x\in\mathcal{H}}f(x).$$ 
If we assume $\lambda > \frac{1}{(\alpha - 1)^{2}}$, allow $\gamma : [t_{0}, +\infty) \to (0, +\infty)$ to be unbounded from above and otherwise keep the hypotheses of Theorem \ref{minimizar f + g}, for a solution $x : [t_{0}, +\infty) \to \mathcal{H}$ to $(\ref{system minimizar f})$, the following statements hold:
\begin{enumerate}[(i)]
    \item $x$ is bounded,
    \item We have the estimates
    \[
    \int_{t_{0}}^{+\infty}t\|\dot{x}(t)\|^{2}dt < +\infty, \quad \int_{t_{0}}^{+\infty}t^{3}\|\ddot{x}(t)\|^{2}dt < +\infty,\quad \int_{t_{0}}^{+\infty}\frac{\gamma^{2}(t)}{t}\left\|\nabla f_{\gamma(t)}(x(t))\right\|^{2}dt < +\infty,
    \]
    \item We have the convergence rates
    \begin{align*}
    & \|\dot{x}(t)\| = o\left(\frac{1}{t}\right), \ \|\ddot{x}(t)\| = \mathcal{O}\left(\frac{1}{t^{2}}\right), \\
   & \left\|\nabla f_{\gamma(t)}(x(t))\right\| = o\left(\frac{1}{\gamma(t)}\right), \  \left\|\frac{d}{dt}\nabla f_{\gamma(t)}(x(t))\right\| = \mathcal{O}\left(\frac{1}{t\gamma(t)}\right) + o\left(\frac{t^{2}\left|\frac{d}{dt}\frac{\gamma(t)}{\lambda(t)}\right|}{\gamma^{2}(t)}\right)
    \end{align*}
    as $t\to +\infty$.
    \item If $0 < \inf_{t\geq t_{0}}\gamma(t)$, then $x(t)$ converges weakly to a minimizer of $f$ as $t \rightarrow +\infty$.
    \item We also obtain the rate
    \[
    f_{\gamma(t)}(x(t)) - \min\nolimits_{\mathcal{H}}f = o\left(\frac{1}{\gamma(t)}\right) \quad \text{as} \quad t\to +\infty,
    \]
    which entails
    \[
    f\left(\prox_{\gamma(t)f}(x(t))\right) - \min\nolimits_{\mathcal{H}}f = o\left(\frac{1}{\gamma(t)}\right) \quad \text{and} \quad \left\|\prox_{\gamma(t)f}(x(t)) - x(t)\right\| \to 0
    \]
    as $t\to +\infty$.
\end{enumerate}
Parts (i)-(iv) are a direct consequence of Theorem \ref{TheoremCaseBzero} for the case $A = \partial f$. For showing part (v), first notice that for $\lambda > 0$ and $u\in\mathcal{H}$ we have, according to the definition of $f_{\lambda}$ and $\prox_{\lambda f}$,
\begin{align*}
f_{\lambda}(u) = f\left(\prox_{\lambda f}(u)\right) + \frac{1}{2\lambda}\left\|\prox_{\lambda f}(u) - u\right\|^{2} \leq f(u).
\end{align*}
Let $\overline{x} \in \mathcal{H}$ be a minimizer of $f$. We apply the gradient inequality to $f_{\gamma(t)}$, from which we obtain, for every $t\geq t_{0}$
\begin{align*}
    f_{\gamma(t)}(x(t)) - \min\nolimits_{\mathcal{H}}f &= f_{\gamma(t)}(x(t)) - f(\overline{x}) \leq f_{\gamma(t)}(x(t)) - f_{\lambda(t)}(\overline{x}) \\
& \leq \left\langle \nabla f_{\gamma(t)}(x(t)), x(t) - \overline{x}\right\rangle \leq \left\|\nabla f_{\gamma(t)}(x(t))\right\|\|x(t) - \overline{x}\|,
\end{align*}
where the last inequality follows from the Cauchy-Schwarz inequality. Since $\left\|\nabla f_{\gamma(t)}(x(t))\right\| = o\left(\frac{1}{\gamma(t)}\right)$ as $t\to +\infty$ and $x$ is bounded, the previous inequality entails the first statement of (v). Again recalling the definition of the Moreau envelope of $f$, this finally gives 
\[
f\left(\prox_{\gamma(t)f}(x(t))\right) + \frac{1}{2\gamma(t)}\left\|\prox_{\gamma(t)f}(x(t)) - x(t)\right\|^{2} - \min\nolimits_{\mathcal{H}}f = f_{\gamma(t)}(x(t)) - \min\nolimits_{\mathcal{H}}f = o\left(\frac{1}{\gamma(t)}\right)
\]
as $t\to +\infty$, which implies the last two statements and concludes the proof.

As pointed out in Remark \ref{remark1}, we can choose $\gamma(t) = \lambda t^{2}$ for every $t\geq t_{0}$ and recover the (DIN-AVD) system for nonsmooth convex minimization problems studied in \cite{AttouchLaszlo}. Moreover, we can also set $\gamma(t) = t^{n}$ for a natural number $n > 3$ and all $t\geq t_{0}$. Now, not only are the convergence rates for $\nabla f_{\gamma(t)}(x(t))$ and $\frac{d}{dt}\nabla f_{\gamma(t)}(x(t))$ as $t\to +\infty$ improved with respect to the system in \cite{AttouchLaszlo}, but (Split-DIN-AVD) also provides a better rate for the convergence of $f_{\gamma(t)}(x(t))$ to $\min_{\mathcal{H}}f$ as $t\to +\infty$. 
\end{remark}

\section{Numerical experiments}\label{sec5}

In the following paragraphs we describe some numerical experiments that portray some aspects of the theory.

\subsection{Minimizing a smooth and convex function}

As an example of a continuous time scheme minimizing a convex and Fr\'echet differentiable function $g : \mathcal{H} \rightarrow \R$ with $L_{\nabla g}$-Lipschitz continuous gradient via (Split-DIN-AVD), we consider the system
\begin{equation}\label{system A = 0}
\ddot{x}(t) + \frac{\alpha}{t}\dot{x}(t) + \xi\frac{d}{dt}\left(\frac{1}{\eta(t)}\nabla g(x(t))\right) + \frac{1}{\eta(t)}\nabla g(x(t)) = 0,
\end{equation}
where for $(x_{1}, x_{2})\in\R^{2}$ we set $g(x_{1}, x_{2}) = \frac{1}{2}(x_{1}^{2} + 100x_{2}^{2})$ and therefore $\nabla g(x_{1}, x_{2}) = (x_{1}, 100x_{2})$. A trajectory generated by $(\ref{system A = 0})$ is a pair $x(t) = (x_{1}(t), x_{2}(t))$. Figure \ref{fig:1} plots both components of the solution to $(\ref{system A = 0})$ with initial Cauchy data $x_{0} = (1, 1)$, $u_{0} = (1, 1)$. Notice that the Lipschitz constant of $\nabla g$ is $L_{\nabla g} = 100$, which means that the cocoercitivity modulus of $\nabla g$ is $\beta = \frac{1}{L_{\nabla g}} = \frac{1}{100}$. To fulfill $\eta > \frac{1}{\beta(\alpha - 1)^{2}} = \frac{100}{(\alpha - 1)^{2}}$, we choose $\alpha = 20$, $\eta = 0.278$. Figure \ref{fig:1a} corresponds to the case with no Hessian damping, that is, $\xi = 0$. Figure \ref{fig:1b} corresponds to a Hessian damping parameter $\xi = 0.2$. 

\begin{figure}[H]
  \begin{subfigure}{0.52\textwidth}
    \includegraphics[width=\linewidth]{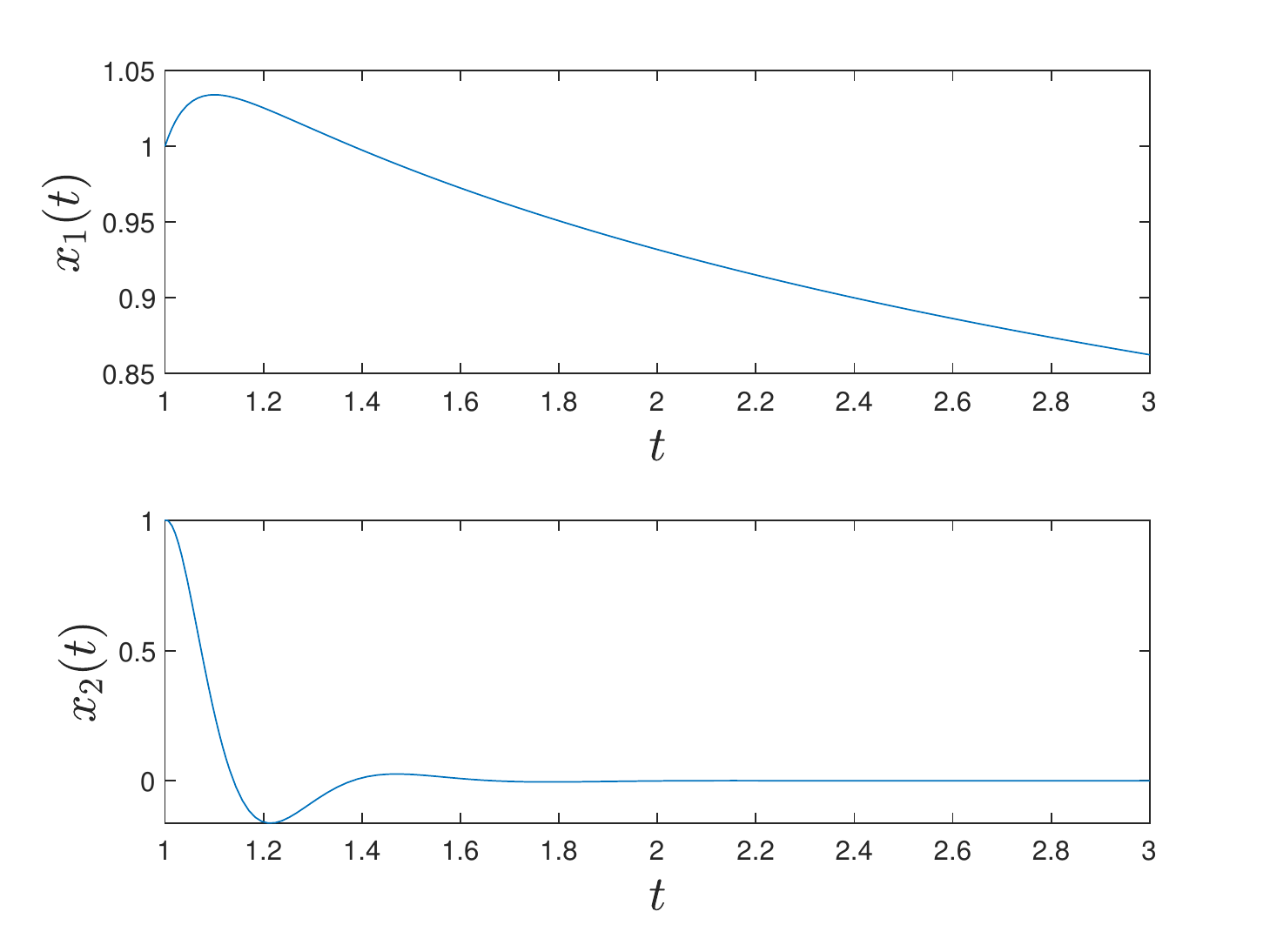}
    \caption{} \label{fig:1a}
  \end{subfigure}
  \hskip -5ex
  \begin{subfigure}{0.52\textwidth}
    \includegraphics[width=\linewidth]{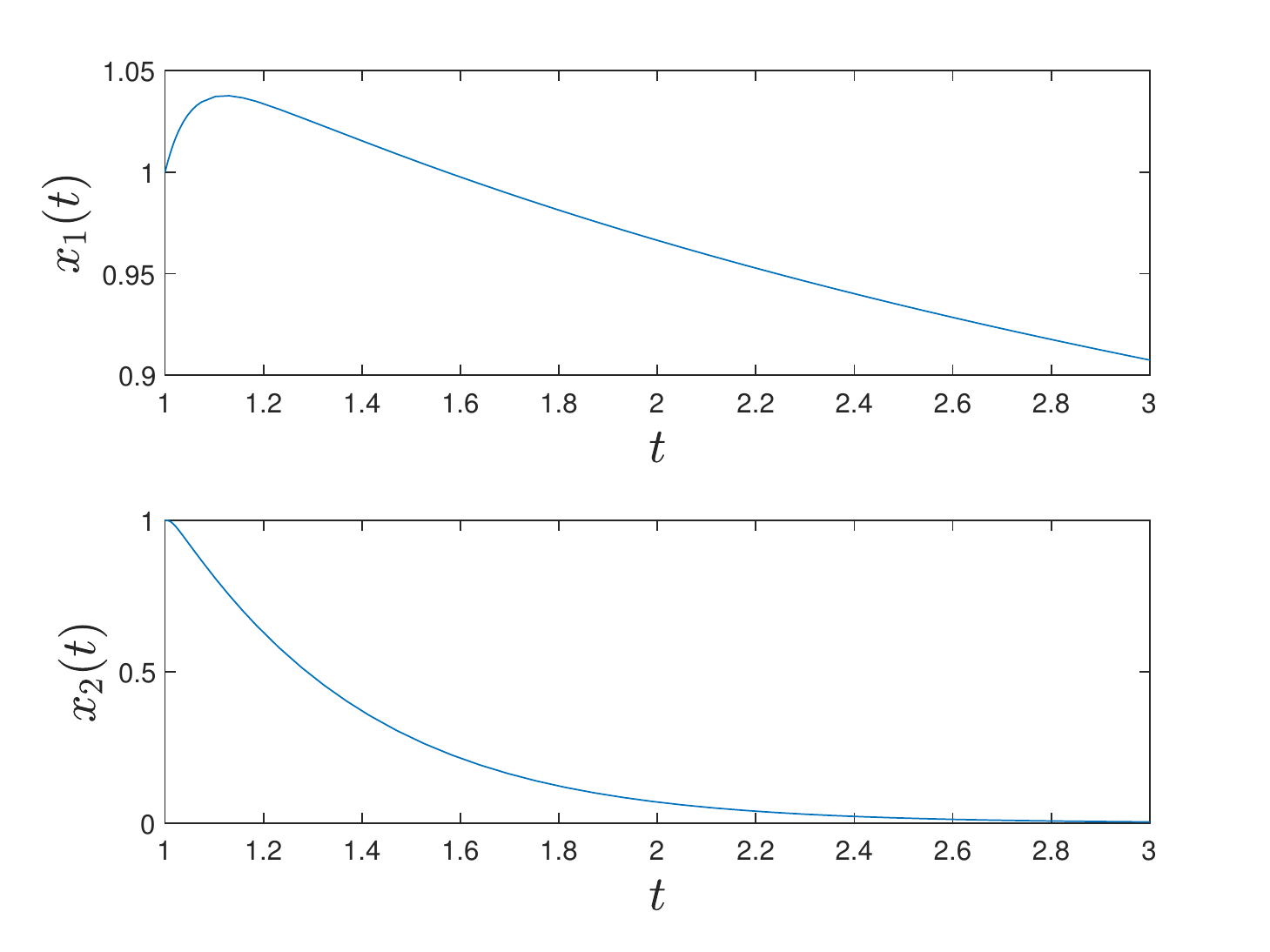}
    \caption{} \label{fig:1b}
  \end{subfigure}
\caption{Trajectories of (Split-DIN-AVD) for $B = \nabla g$} \label{fig:1}
\end{figure}

\noindent Figure \ref{velocities} depicts the fast convergence of the velocities to zero for the cases $\xi = 0$ (Figure \ref{fig:4a}) and $\xi = 0.2$ (Figure \ref{fig:4b}). In both figures, notice the effect of the damping parameter $\xi > 0$, which attenuates the oscillations of the second component of the trajectories, as well as the oscillations present in the velocities.  

\begin{figure}[H]
  \begin{subfigure}{0.52\textwidth}
    \includegraphics[trim = {0 1.2cm 0 2cm}, clip, width=\linewidth]{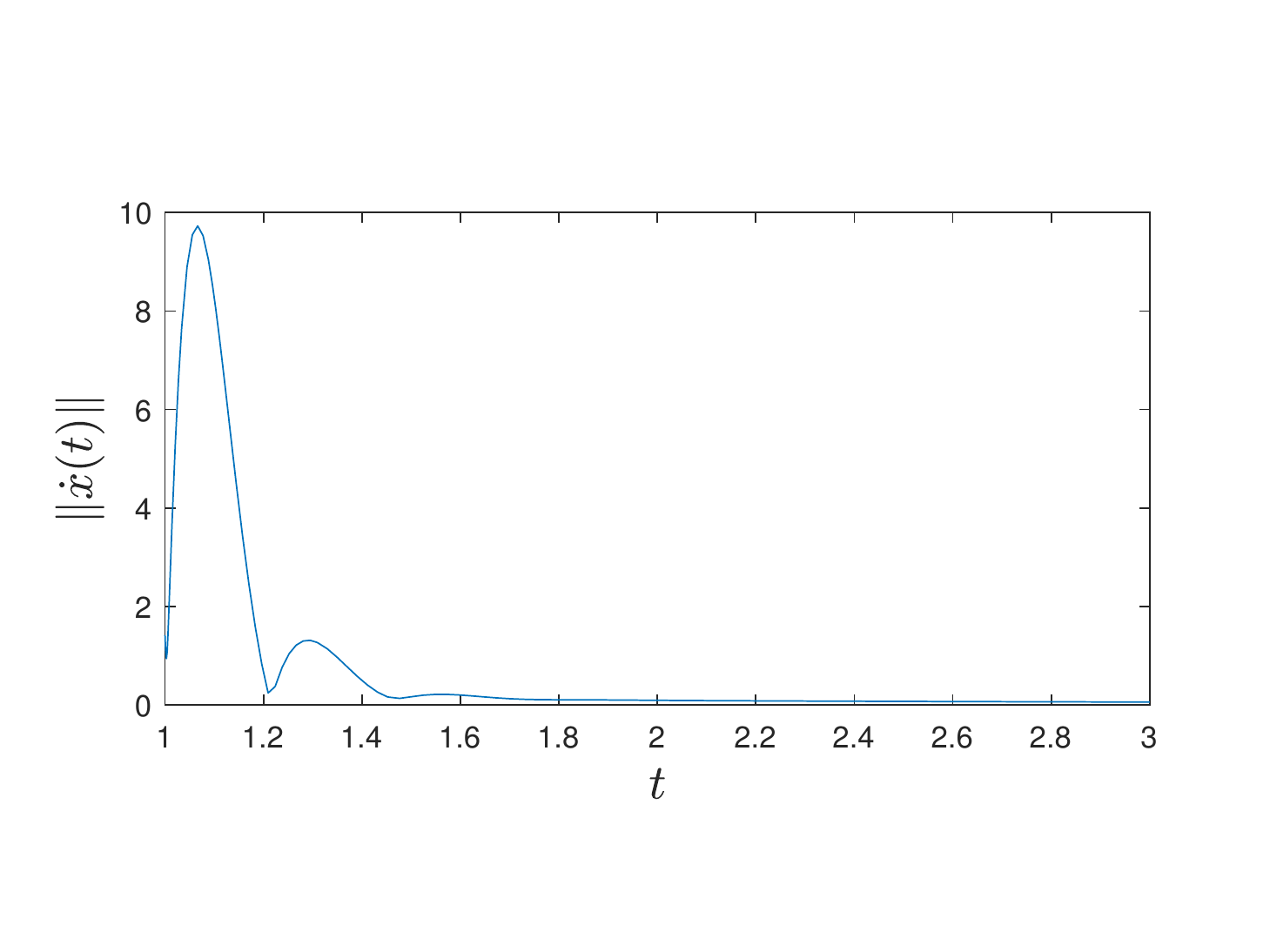}
    \caption{} \label{fig:4a}
  \end{subfigure}
  \hskip -5ex
  \begin{subfigure}{0.52\textwidth}
    \includegraphics[trim = {0 1.2cm 0 2cm}, clip, width=\linewidth]{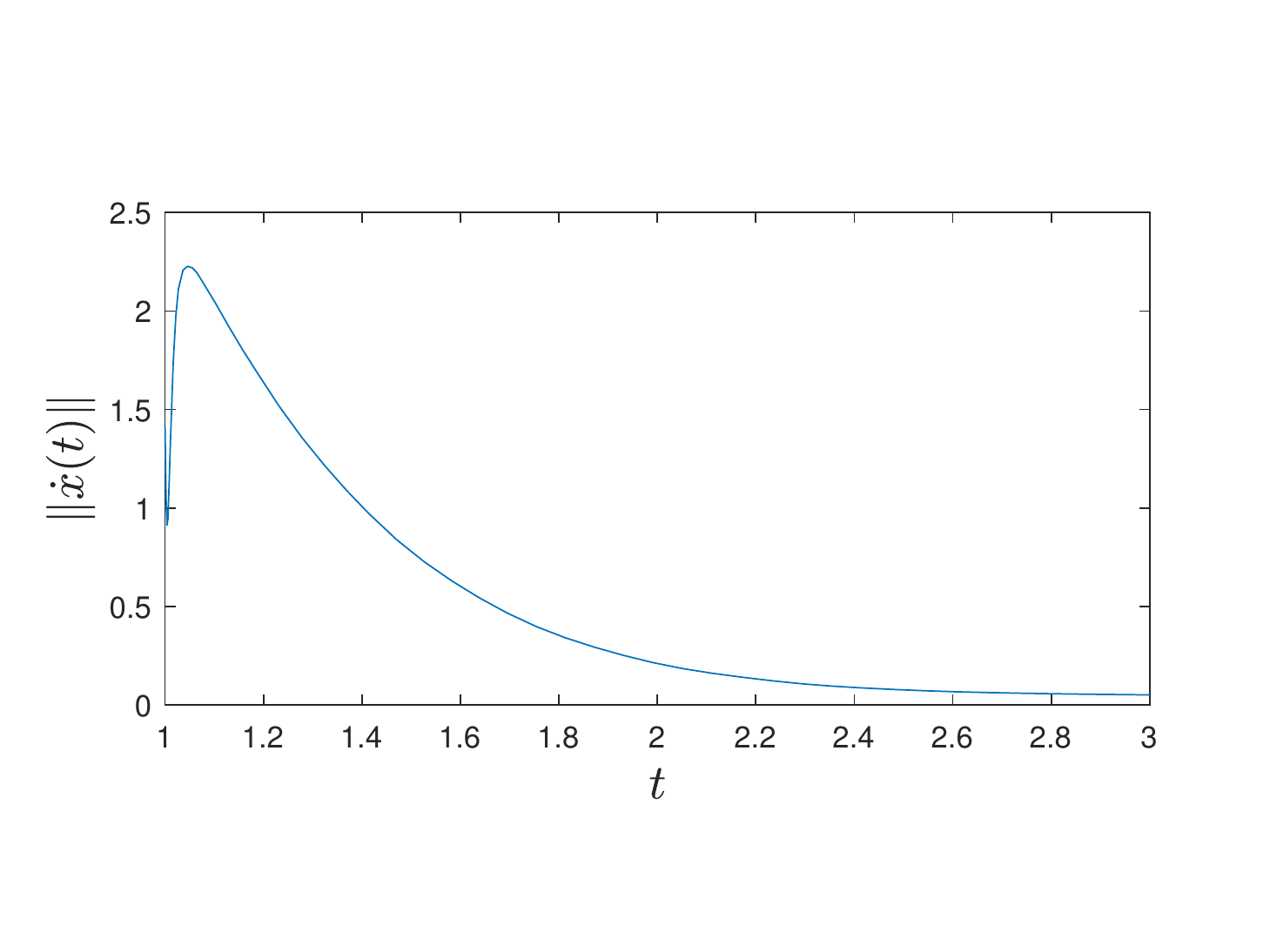}
    \caption{} \label{fig:4b}
  \end{subfigure}
  \caption{Fast convergence of the velocities}
  \label{velocities}
\end{figure}

\subsection{Minimizing a nonsmooth and convex function}

As an example of a continuous time scheme minimizing a proper, convex and lower semicontinuous function $f : \mathcal{H} \rightarrow \R\cup\{+\infty\}$ via (Split-DIN-AVD), we consider the system
\begin{equation}\label{system B = 0}
\ddot{x}(t) + \frac{\alpha}{t} + \xi \frac{d}{dt}\left(\frac{\gamma(t)}{\lambda(t)}\nabla f_{\gamma(t)}(x(t))\right) + \frac{\gamma(t)}{\lambda(t)}\nabla f_{\gamma(t)}(x(t)) = 0.
\end{equation}
We will consider three options for $f$ and plot for each of them the trajectories, the objective function values and the gradients of the Moreau envelopes as follows:
\begin{itemize}
    \item $f(x) = \frac{1}{2}x^{2}$ (Figures \ref{fig:2a} and \ref{fig:5a}),
    \item $f(x) = |x|$ (Figures \ref{fig:2b} and \ref{fig:5b}),
    \item $f(x) = |x| + \frac{1}{2}x^{2}$ (Figures \ref{fig:2c} and \ref{fig:5c}).
\end{itemize}
In order to fulfill $\alpha > 1$ and $\lambda > \frac{1}{(\alpha - 1)^{2}}$, we choose the parameters $\alpha = 2$,  $\lambda = 1.1$, and we take $\xi = 0$ and $\gamma(t) = t^{8}$. We compare the results given by (DIN-AVD) (that is, when $\gamma(t) = \lambda t^{2}$) and the ones given by our system (Split-DIN-AVD). The choice of $\xi$ does not seem to change the plots in a significant way for the examples we have chosen. 
\begin{figure}[H]
  \begin{subfigure}{0.34\textwidth}
    \includegraphics[width=\linewidth]{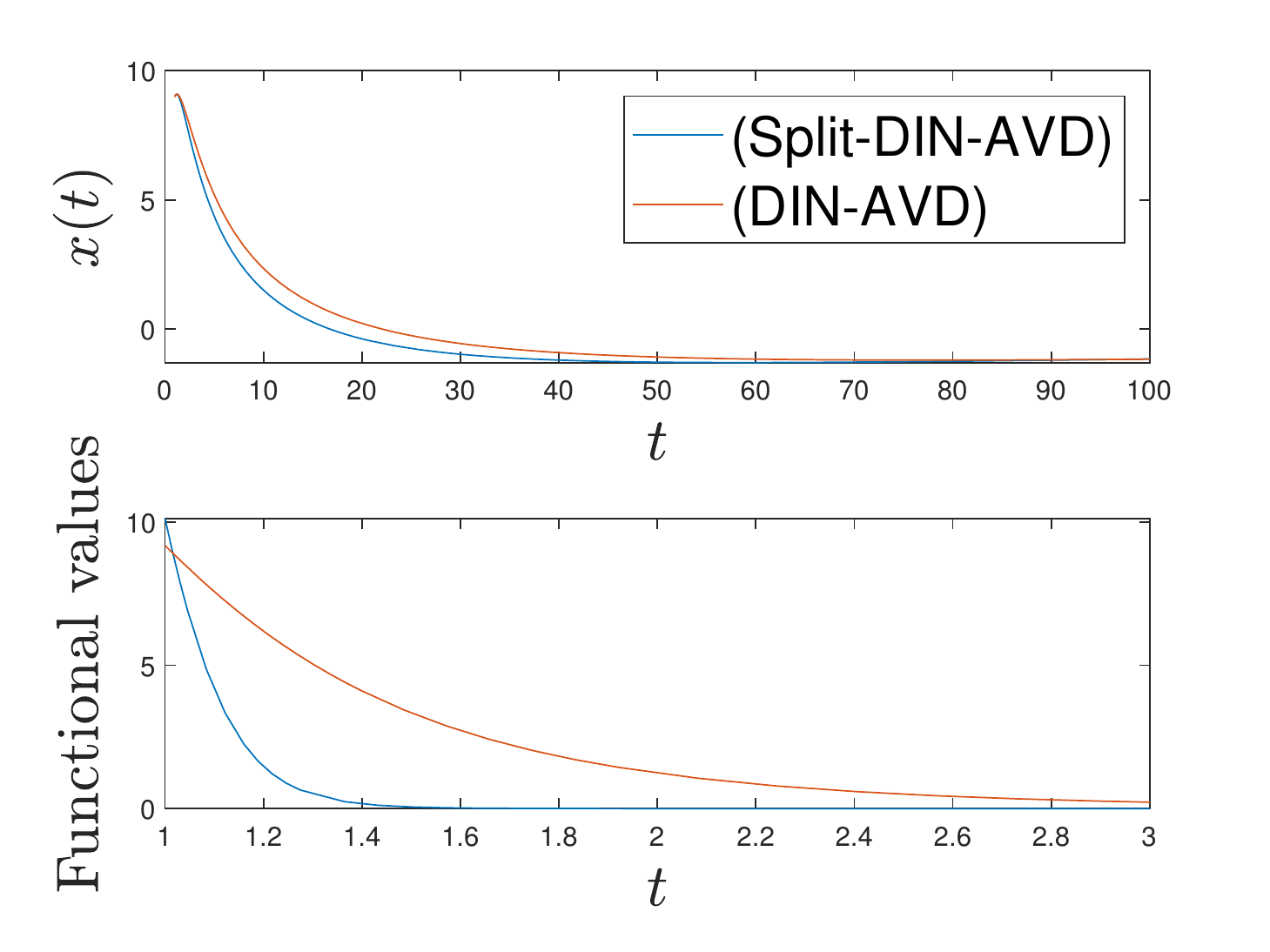}
    \caption{} \label{fig:2a}
  \end{subfigure}%
  \hskip -2ex
  \begin{subfigure}{0.34\textwidth}
    \includegraphics[width=\linewidth]{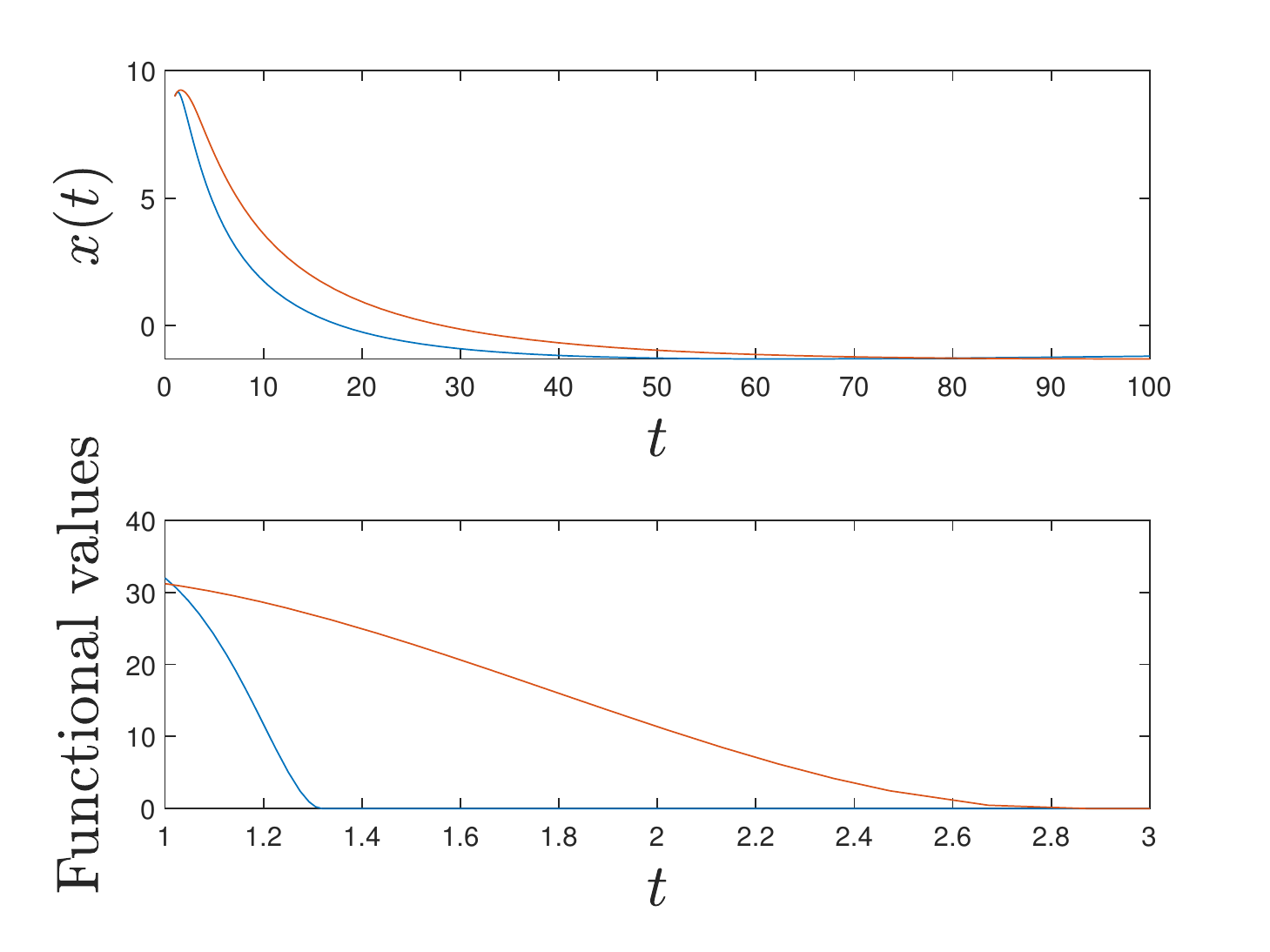}
    \caption{} \label{fig:2b}
  \end{subfigure}
  \hskip -3ex
  \begin{subfigure}{0.34\textwidth}
    \includegraphics[width=\linewidth]{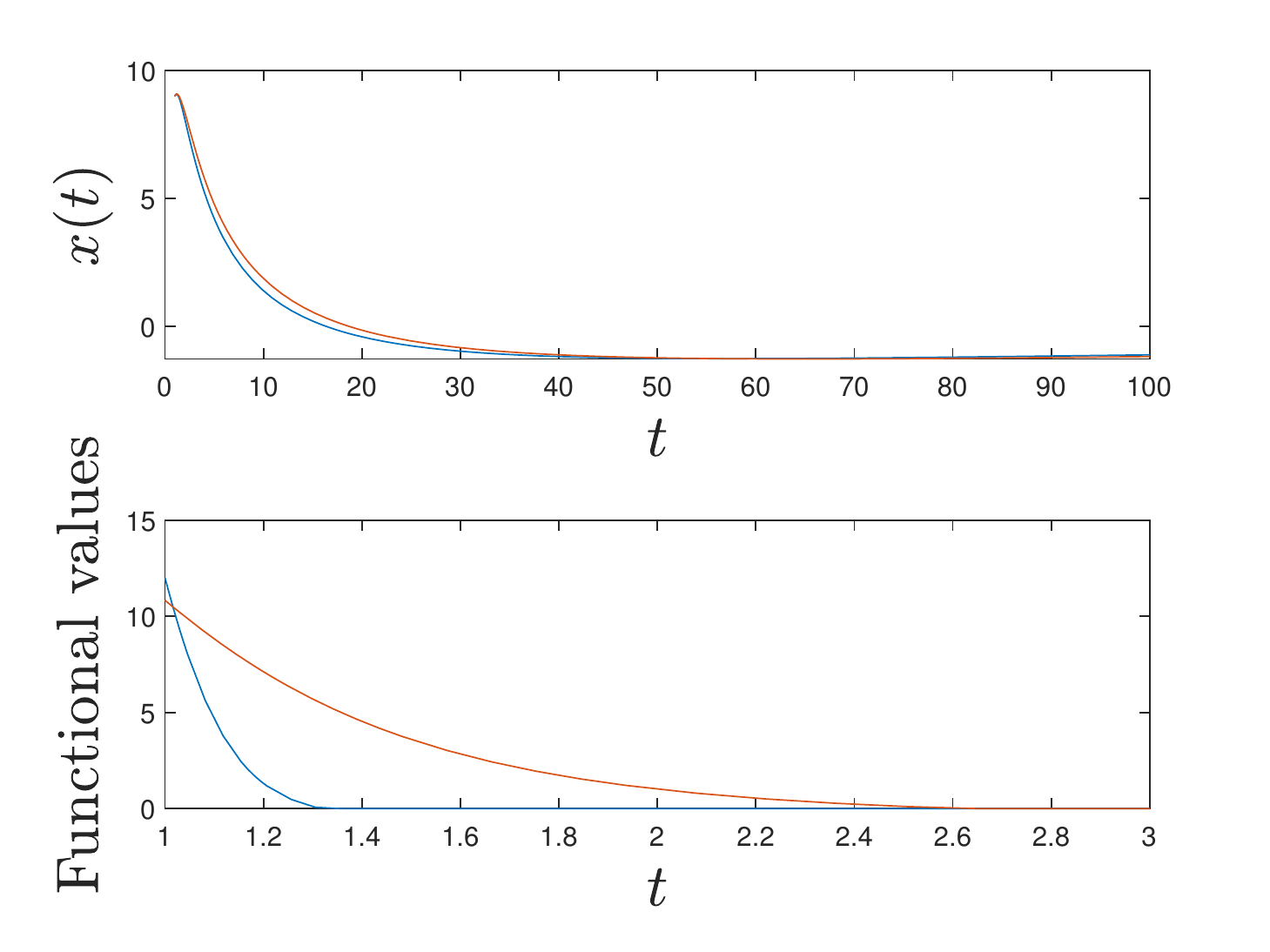}
    \caption{} \label{fig:2c}
  \end{subfigure}
\caption{Trajectories and objective function values in the case $A = \partial f$} \label{fig:2}
\label{functional values}
\end{figure}
\begin{figure}[H]
  \begin{subfigure}{0.34\textwidth}
    \includegraphics[trim = {0 1.2cm 0 2cm}, clip, width=\linewidth]{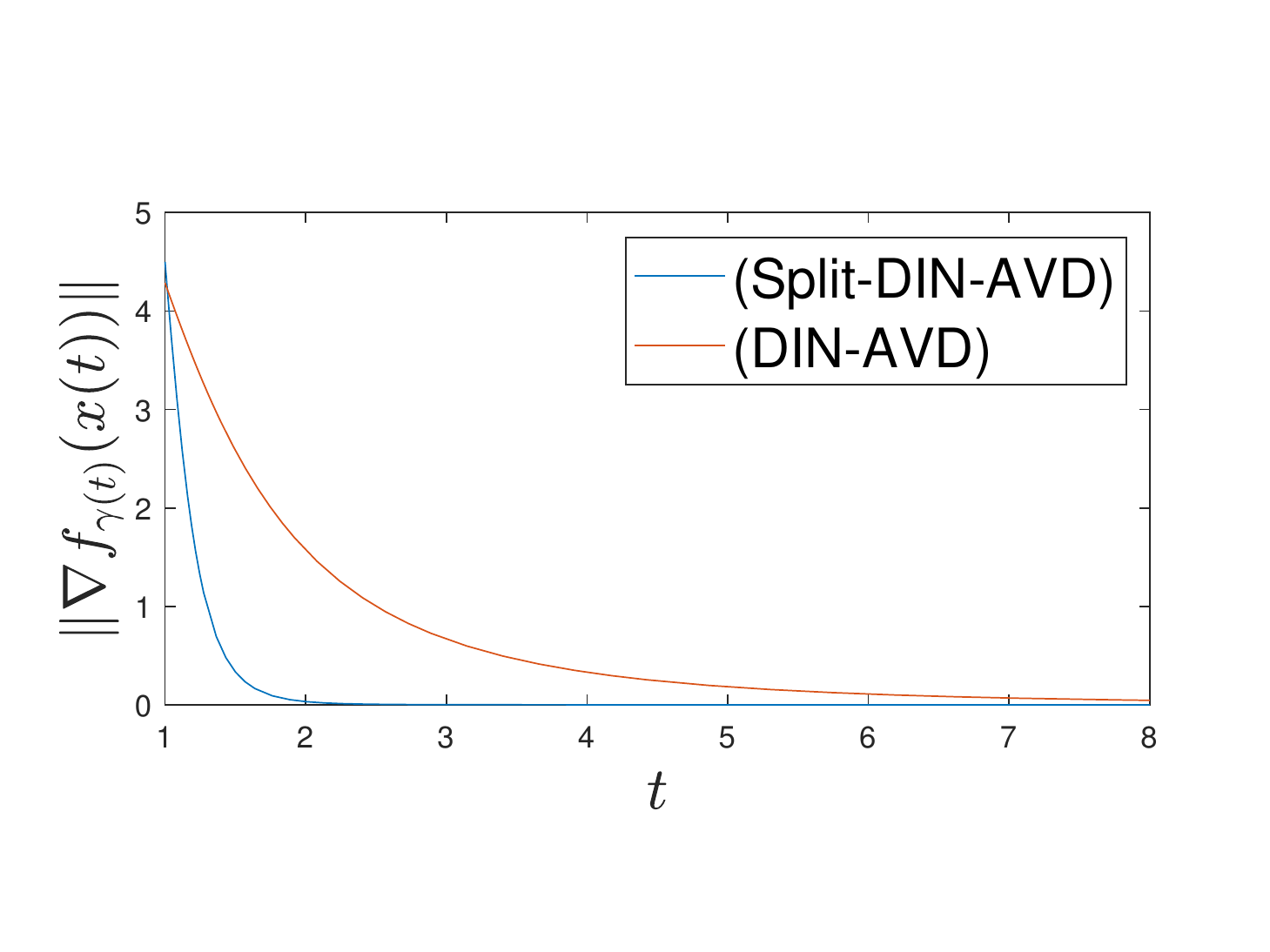}
    \caption{} \label{fig:5a}
  \end{subfigure}%
  \hskip -2ex
  \begin{subfigure}{0.34\textwidth}
    \includegraphics[trim = {0 1.2cm 0 2cm}, clip, width=\linewidth]{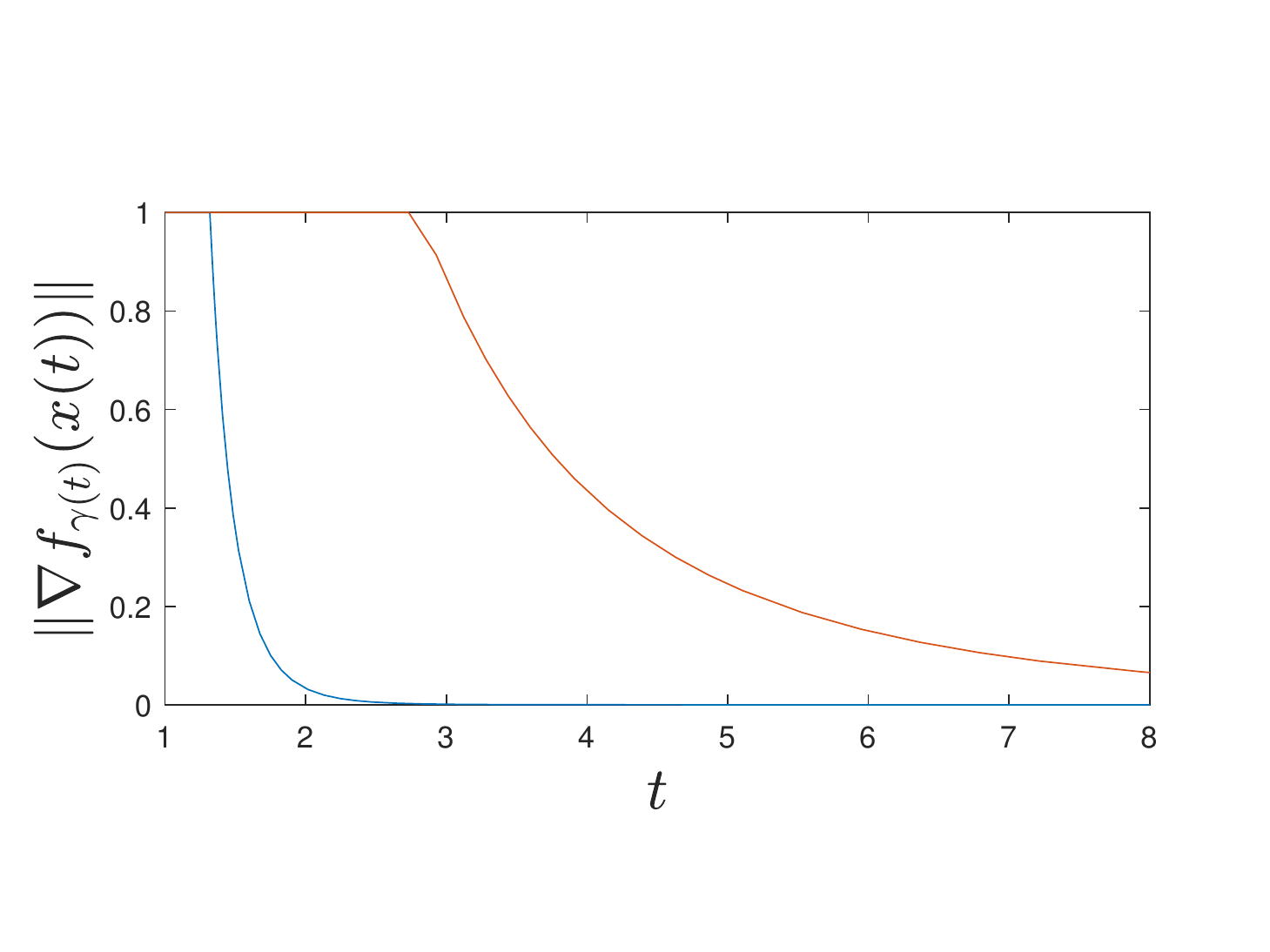}
    \caption{} \label{fig:5b}
  \end{subfigure}
  \hskip -3ex
  \begin{subfigure}{0.34\textwidth}
    \includegraphics[trim = {0 1.2cm 0 2cm}, clip, width=\linewidth]{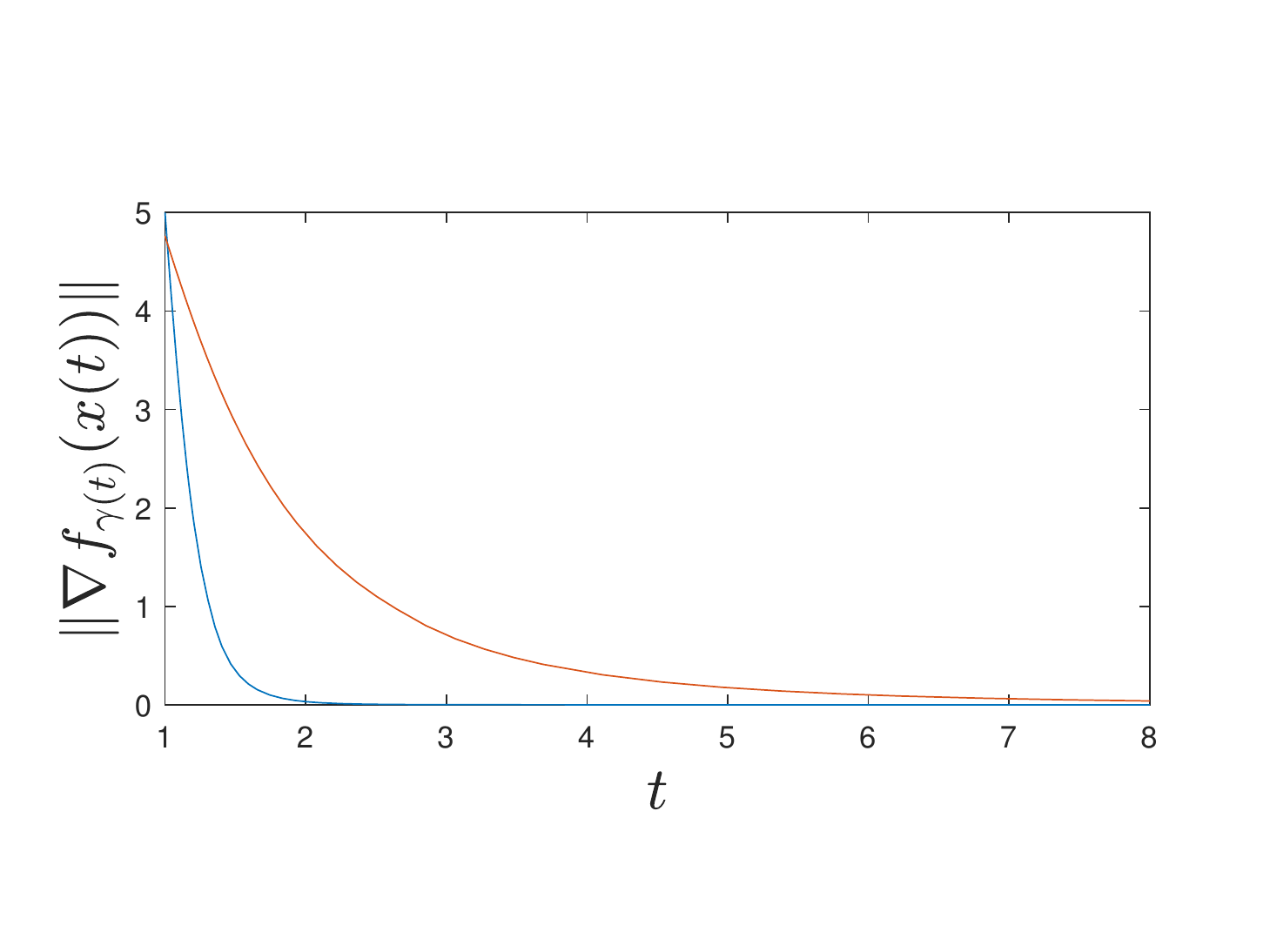}
    \caption{} \label{fig:5c}
  \end{subfigure}
\caption{Gradients of the Moreau envelopes of $f$} 
\label{moreau envelopes}
\end{figure}
Figure \ref{functional values} depicts the trajectories $x(t)$ of $(\ref{system B = 0})$ and the function values $f\left(\prox_{\gamma(t)}(x(t))\right)$ for our choices of $f$ as $t \rightarrow +\infty$. Figure \ref{moreau envelopes} portrays the fast convergence to zero of $\|\nabla f_{\gamma(t)}(x(t))\|$ as $t\to +\infty$. Notice the big improvement over (DIN-AVD) for nonsmooth convex minimization in \cite{AttouchLaszlo} when choosing $\gamma(t) = t^{8}$, a result which we already knew theoretically. Polynomials of high degree seem to be the ones which give the biggest improvements in terms of rates. 

\subsection{An example with operator splitting}

Now we consider the monotone inclusion problem \eqref{monotoneinclusion} for $A(x_{1}, x_{2}) = (-x_{2}, x_{1})$ and $B(x_{1}, x_{2}) = (x_{1}, x_{2})$ for every $(x_{1}, x_{2})\in\R^{2}$. For every $(x_{1}, x_{2}) \in \R^{2}$, an easy calculation gives
\[
J_{\gamma A} 
\begin{bmatrix}
x_{1} \\
x_{2}
\end{bmatrix}
= 
\begin{bmatrix}
\frac{1}{1 + \gamma^{2}} & \frac{\gamma}{1 + \gamma^{2}} \\
\frac{-\gamma}{1 + \gamma^{2}} & \frac{1}{1 + \gamma^{2}}
\end{bmatrix}
\begin{bmatrix}
x_{1} \\
x_{2}
\end{bmatrix},
\]
and so
\begin{align*}
(\id - J_{\gamma A}(\id - \gamma \id))
\begin{bmatrix}
x_{1} \\
x_{2}
\end{bmatrix} &= 
\begin{bmatrix}
x_{1} \\
x_{2}
\end{bmatrix}
- (1 - \gamma)
\begin{bmatrix}
\frac{1}{1 + \gamma^{2}} & \frac{\gamma}{1 + \gamma^{2}} \\
\frac{-\gamma}{1 + \gamma^{2}} & \frac{1}{1 + \gamma^{2}}
\end{bmatrix}
\begin{bmatrix}
x_{1} \\
x_{2}
\end{bmatrix} = \begin{bmatrix}
\frac{\gamma^{2} + \gamma}{1 + \gamma^{2}} & \frac{\gamma - 1}{1 + \gamma^{2}} \\
\frac{1 - \gamma}{1 + \gamma^{2}} & \frac{\gamma^{2} + \gamma}{1 + \gamma^{2}}
\end{bmatrix}
\begin{bmatrix}
x_{1} \\
x_{2}
\end{bmatrix},
\end{align*}
and
\[
T_{\lambda, \gamma}
\begin{bmatrix}
x_{1} \\
x_{2}
\end{bmatrix}
= \begin{bmatrix}
\frac{\gamma^{2} + \gamma}{\lambda(1 + \gamma^{2})} & \frac{\gamma - 1}{\lambda(1 + \gamma^{2})} \\
\frac{1 - \gamma}{\lambda(1 + \gamma^{2})} & \frac{\gamma^{2} + \gamma}{\lambda(1 + \gamma^{2})}
\end{bmatrix}
\begin{bmatrix}
x_{1} \\
x_{2}
\end{bmatrix}.
\]
(Split-DIN-AVD) now reads
\begin{align*}
\begin{bmatrix}
\ddot{x_{1}}(t) \\
\ddot{x_{2}}(t)
\end{bmatrix} +
\frac{\alpha}{t}
\begin{bmatrix}
\dot{x_{1}}(t) \\
\dot{x_{2}}(t)
\end{bmatrix} &+ \:
\xi \frac{d}{dt} \left(
\begin{bmatrix}
\frac{\gamma^{2}(t) + \gamma(t)}{\lambda(t)(1 + \gamma^{2}(t))} & \frac{\gamma(t) - 1}{\lambda(t)(1 + \gamma^{2}(t))} \\
\frac{1 - \gamma(t)}{\lambda(t)(1 + \gamma^{2}(t))} & \frac{\gamma^{2}(t) + \gamma(t)}{\lambda(t)(1 + \gamma^{2}(t))}
\end{bmatrix}
\begin{bmatrix}
x_{1}(t) \\
x_{2}(t)
\end{bmatrix}
\right) \\ 
&+ \begin{bmatrix}
\frac{\gamma^{2}(t) + \gamma(t)}{\lambda(t)(1 + \gamma(t)^{2})} & \frac{\gamma(t) - 1}{\lambda(t)(1 + \gamma^{2}(t))} \\
\frac{1 - \gamma(t)}{\lambda(1 + \gamma^{2}(t))} & \frac{\gamma^{2}(t) + \gamma(t)}{\lambda(t)(1 + \gamma^{2}(t))}
\end{bmatrix}
\begin{bmatrix}
x_{1}(t) \\
x_{2}(t)
\end{bmatrix} = 
\begin{bmatrix}
0 \\
0
\end{bmatrix}.
\end{align*}
We choose the parameters $\alpha = 7$, $\lambda = 0.056$, $\gamma(t) \equiv 1.5$, and the Cauchy data $x_{0} = (1, 2)$ and $u_{0} = (-1, -1)$. Figure \ref{fig:3b} corresponds to the case $\xi = 0$, and Figure \ref{fig:3a} depicts the trajectory when the Hessian damping parameter is $\xi = 0.8$. Again, notice how, not only for optimization problems, but also for monotone inclusions which cannot be reduced to the former, the presence of $\xi$ seems to attenuate the oscillations present in the trajectories. 

\begin{figure}[H]
  \begin{subfigure}{0.52\textwidth}
    \includegraphics[width=\linewidth]{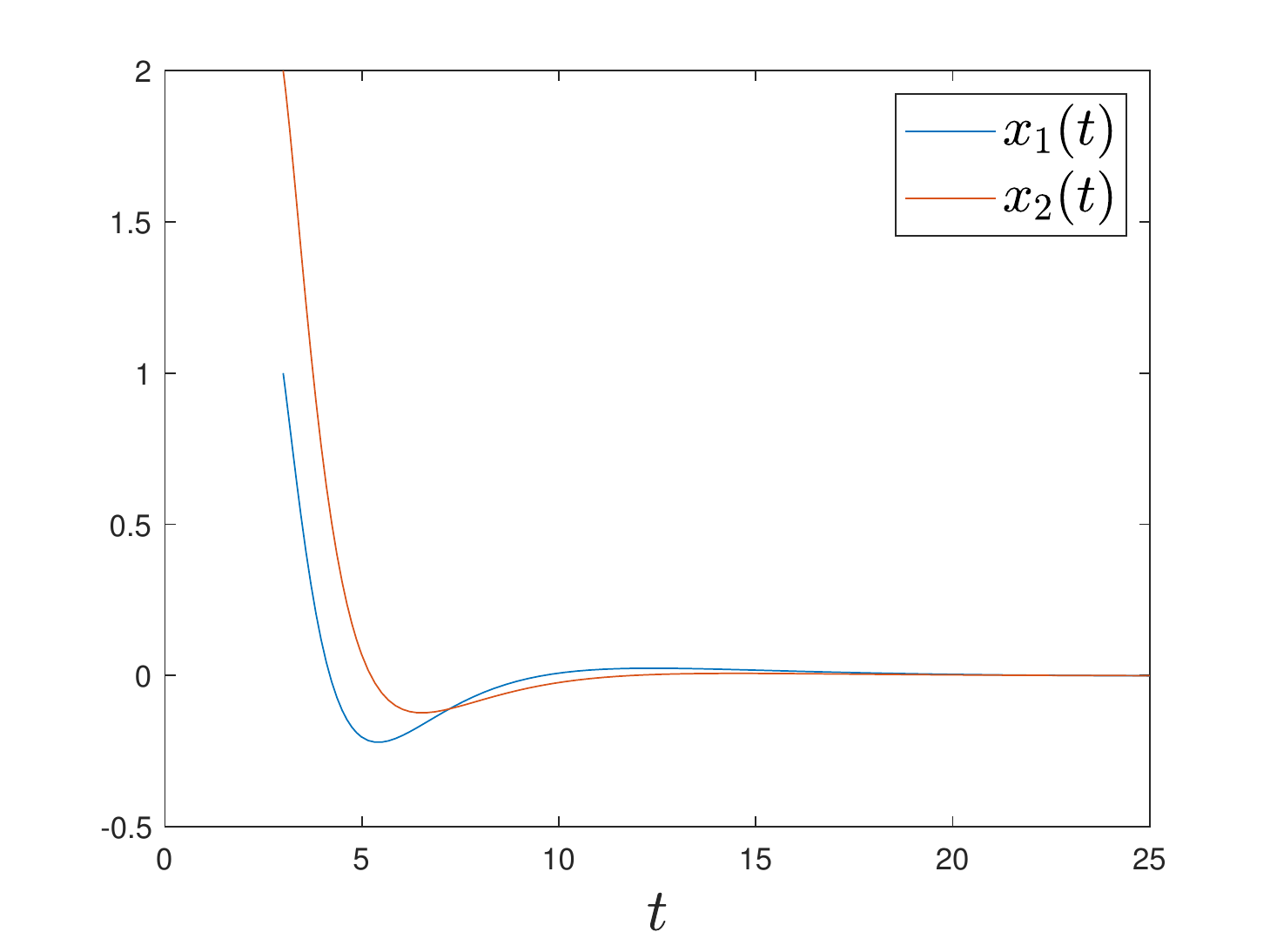}
    \caption{} \label{fig:3b}
  \end{subfigure}
  \hskip -5ex
  \begin{subfigure}{0.52\textwidth}
    \includegraphics[width=\linewidth]{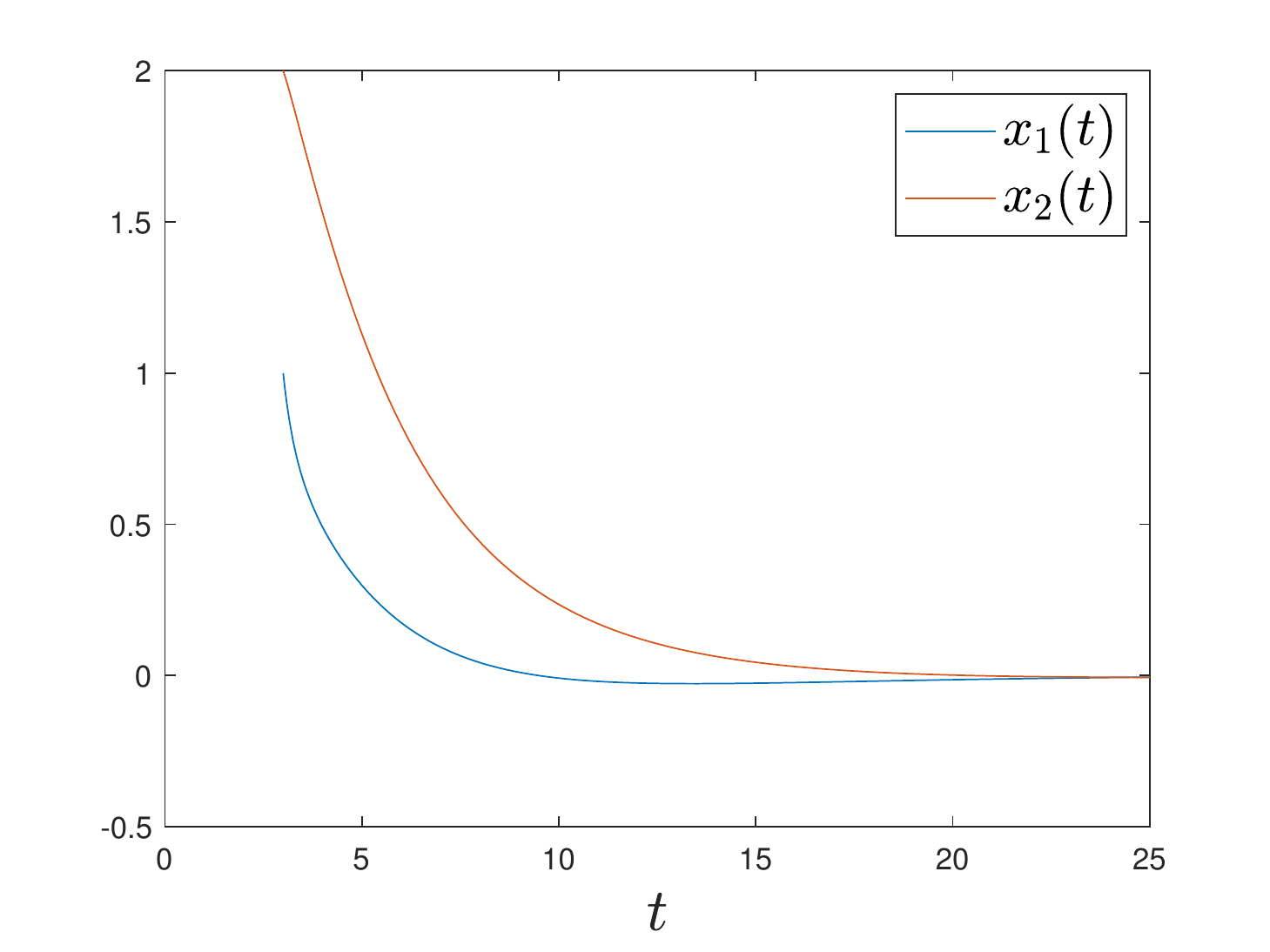}
    \caption{} \label{fig:3a}
  \end{subfigure}

\caption{Trajectories of (Split-DIN-AVD) for finding the zeros of $A + B$} \label{fig:3}
\end{figure}


\section{A numerical algorithm}\label{sec6}

In the following we will derive via time discretization of (Split-DIN-AVD) a numerical algorithm for solving the monotone inclusion problem \eqref{monotoneinclusion}. We perform a discretization of (Split-DIN-AVD) with stepsize $1$ and set, for an integer $k \geq 1$, $x(k) := x_{k}$, $\lambda(k) := \lambda_{k}$, $\gamma(k) := \gamma_{k}$. We make the approximations
\begin{align*}
\ddot{x}(t) &\approx x_{k + 1} - 2x_{k} + x_{k - 1}, &\quad \frac{\alpha}{t}\dot{x}(t) &\approx \frac{\alpha}{k}(x_{k} - x_{k - 1}), \\
\frac{d}{dt}T_{\lambda(t), \gamma(t)}(x(t)) &\approx T_{\lambda_{k}, \gamma_{k}}(x_{k}) - T_{\lambda_{k - 1}, \gamma_{k - 1}}(x_{k - 1}), &\quad T_{\lambda(t), \gamma(t)}(x(t)) &\approx T_{\lambda_{k + 1}, \gamma_{k + 1}}(x_{k + 1}),
\end{align*}
so we get, for every $k\geq 1$, 
\begin{equation}\label{discretesystem}
    x_{k + 1} - 2x_{k} - x_{k - 1} + \frac{\alpha}{k}(x_{k} - x_{k - 1}) + \xi\left(T_{\lambda_{k}, \gamma_{k}}(x_{k}) - T_{\lambda_{k - 1}, \gamma_{k - 1}}(x_{k - 1})\right) + T_{\lambda_{k + 1}, \gamma_{k + 1}}(x_{k + 1}) = 0.
\end{equation}
After rearranging the terms of $(\ref{discretesystem})$, for every $k\geq 1$ we obtain
\begin{equation}\label{discretesystem2}
    x_{k + 1} + T_{\lambda_{k + 1}, \gamma_{k + 1}}(x_{k + 1}) = x_{k} + \left(1 - \frac{\alpha}{k}\right)(x_{k} - x_{k - 1}) - \xi\left(T_{\lambda_{k}, \gamma_{k}}(x_{k}) - T_{\lambda_{k - 1}, \gamma_{k - 1}}(x_{k - 1})\right).
\end{equation}
In other words, after setting $\alpha_{k} = 1 - \frac{\alpha}{k}$ and denoting the right hand side of $(\ref{discretesystem2})$ by $y_{k}$ for every $k\geq 1$, we obtain the following iterative scheme
\begin{equation}
(\forall k \geq 1) \ \left\{
\begin{aligned}\label{algorithm}
    y_{k} &= x_{k} + \alpha_{k}(x_{k} - x_{k - 1}) - \xi\left(T_{\lambda_{k}, \gamma_{k}}(x_{k}) - T_{\lambda_{k - 1}, \gamma_{k - 1}}(x_{k - 1})\right), \\
    x_{k + 1} &= \left(\id + T_{\lambda_{k + 1}, \gamma_{k + 1}}\right)^{-1}(y_{k}).
\end{aligned}
\right.
\end{equation}
Observe that the second step in \eqref{algorithm} is always well-defined. Indeed, for $\lambda, \gamma > 0$, $T_{\lambda, \gamma}$ is $\frac{\lambda}{2}$-cocoercive, hence monotone (see Lemma \ref{lemma1}(i)). This also implies that $T_{\lambda, \gamma}$ is $\frac{2}{\lambda}$-Lipschitz continuous, and a monotone and continuous operator is maximally monotone, according to \cite[Corollary 20.28]{BC}. Hence, by Minty's Theorem (see \cite[Theorem 21.1]{BC}), we know that $\id + T_{\lambda, \gamma} : \mathcal{H}\to \mathcal{H}$ is surjective. 

We are in conditions of stating the main theorem concerning our previous algorithm. 
\begin{theorem}\label{discrete theorem}
Let $A : \mathcal{H}\to 2^{\mathcal{H}}$ be a maximally monotone operator and $B : \mathcal{H}\to \mathcal{H}$ a $\beta$-cocoercive operator for some $\beta \geq 0$ such that $\zer(A + B)\neq \emptyset$. Choose $x_{0}, x_{1}\in\mathcal{H}$ any initial points. Let $\alpha > 1$, $\xi\geq 0$, and $(\lambda_{k})_{k \geq 0}$, $(\gamma_{k})_{k \geq 0}$ sequences of positive numbers that fulfill
\begin{gather*}
    \lambda_{k} = \lambda k^{2} \:\:\: \forall k\geq 1, \quad\text{with}\quad \lambda > \frac{4\xi + 2}{(\alpha - 1)^{2}}, \\
    0 < \inf_{k \geq 0}\gamma_{k} \leq \sup_{k \geq 0}\gamma_{k} < 2\beta \quad \text{and} \quad \frac{\gamma_{k} - \gamma_{k - 1}}{\gamma_{k}} = \mathcal{O}\left(\frac{1}{k}\right) \quad \text{as} \quad k\to +\infty.
\end{gather*}
Now, consider the sequences $(y_{k})_{k\geq 1}$ and $(x_{k})_{k\geq 0}$ generated by algorithm $(\ref{algorithm})$. The following properties are satisfied:
\begin{enumerate}[(i)]
    \item We have the estimates 
    \[
    \|x_{k + 1} - x_{k}\| = \mathcal{O}\left(\frac{1}{k}\right) \quad \text{and} \quad \left\|A_{\gamma_{k}}(x_{k} - \gamma_{k}Bx_{k}) + Bx_{k}\right\| = o\left(\frac{1}{\gamma_{k}}\right) \quad \text{as} \quad k\to +\infty.
    \]
    \item The sequence $(x_{k})_{k\geq 0}$ converges weakly to an element of $\zer(A + B)$.
    \item The sequence $(y_{k})_{k\geq 1}$ converges weakly to an element of $\zer(A + B)$. Precisely, we have $\|x_{k} - y_{k}\| = \mathcal{O}\left(\frac{1}{k}\right)$ as $k\to +\infty$.
\end{enumerate}
\end{theorem}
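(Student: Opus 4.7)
The plan is to mirror the continuous analysis of Theorem \ref{maintheorem} by introducing a discrete Lyapunov sequence. For a fixed $\overline{x}\in\zer(A+B)$, define
\[
E_k := \frac{1}{2}\left\|\frac{\alpha-1}{2}(x_k-\overline{x}) + k\big((x_k-x_{k-1})+\xi\,T_{\lambda_k,\gamma_k}(x_k)\big)\right\|^2 + \frac{(\alpha-1)^2}{8}\|x_k-\overline{x}\|^2,
\]
which is the exact discrete counterpart of $\mathcal{E}(t)$. The first step is to compute $E_{k+1}-E_k$, using $(\ref{discretesystem})$ to eliminate the second difference $x_{k+1}-2x_k+x_{k-1}$ and the increment $T_{\lambda_{k+1},\gamma_{k+1}}(x_{k+1})-T_{\lambda_k,\gamma_k}(x_k)$. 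Exploiting the $\frac{\lambda_k}{2}$-cocoercivity of $T_{\lambda_k,\gamma_k}$ from Lemma \ref{lemma1}(i) on the cross term $\langle x_k-\overline{x},\,T_{\lambda_k,\gamma_k}(x_k)\rangle$, and then applying the elementary inequality $|\langle a,b\rangle|\le \tfrac{1}{2}(c\|a\|^2+c^{-1}\|b\|^2)$ tuned with an $\varepsilon>0$, the aim is to absorb the cross-terms $\langle T_{\lambda_k,\gamma_k}(x_k),x_k-x_{k-1}\rangle$ and to reach an estimate of the form
\[
E_{k+1}-E_k+\frac{\varepsilon}{2}k\|x_{k+1}-x_k\|^2+\frac{\varepsilon}{4}k\lambda_k\|T_{\lambda_k,\gamma_k}(x_k)\|^2 \le C_k,
\]
where $C_k$ collects the lower-order discretization remainders (these are absent in continuous time, which is what forces the stricter condition $\lambda>\frac{4\xi+2}{(\alpha-1)^2}$ instead of $\lambda>\frac{2}{(\alpha-1)^2}$). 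The hardest step is precisely verifying that the coefficients which multiply $\|x_{k+1}-x_k\|^2$, $\langle T_{\lambda_k,\gamma_k}(x_k), x_{k+1}-x_k\rangle$ and $\|T_{\lambda_k,\gamma_k}(x_k)\|^2$ produce a nonpositive quadratic form for $k$ large; the analogue of the continuous discriminant $R(t)$ becomes a polynomial in $k$ whose leading coefficient is negative exactly under the strengthened bound on $\lambda$.

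From the (essentially) monotone behaviour of $E_k$ I would then read off the boundedness of $(x_k)$, the summabilities
\[
\sum_{k\ge 1} k\|x_{k+1}-x_k\|^2 <+\infty, \qquad \sum_{k\ge 1} k\lambda_k\|T_{\lambda_k,\gamma_k}(x_k)\|^2<+\infty,
\]
and the uniform bound $\sup_k\|k((x_k-x_{k-1})+\xi T_{\lambda_k,\gamma_k}(x_k))\|<+\infty$. Combined with the $\frac{2}{\lambda_k}$-Lipschitz estimate $\|T_{\lambda_k,\gamma_k}(x_k)\|\le \frac{2}{\lambda k^2}\|x_k-\overline{x}\|=\mathcal{O}(1/k^2)$, this yields $\|x_{k+1}-x_k\|=\mathcal{O}(1/k)$. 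To upgrade $\|T_{\lambda_k,\gamma_k}(x_k)\|=\mathcal{O}(1/k^2)$ to $o(1/k^2)$, I would apply a discrete version of the telescoping argument used for $(\ref{fast decay of T})$: bound $|\,\|\lambda_{k+1}T_{\lambda_{k+1},\gamma_{k+1}}(x_{k+1})\|^4 - \|\lambda_k T_{\lambda_k,\gamma_k}(x_k)\|^4\,|$ via the discrete analogue of Lemma \ref{lemma1}(iii) (using the hypothesis $(\gamma_k-\gamma_{k-1})/\gamma_k=\mathcal{O}(1/k)$ and the rate already obtained for $\|x_k-x_{k-1}\|$), to deduce that $\|\lambda_k T_{\lambda_k,\gamma_k}(x_k)\|^4$ has a limit; the $\ell^1$ summability then forces the limit to be $0$, which gives the $o(1/\gamma_k)$ estimate for $\|A_{\gamma_k}(x_k-\gamma_k Bx_k)+Bx_k\|$ after multiplying by $\lambda_k/\gamma_k$.

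For (ii), the plan is to invoke the standard discrete Opial lemma. The Lyapunov decay together with a Fejér-type auxiliary inequality (obtained by expanding $\|x_{k+1}-\overline{x}\|^2$ and using that $\sum k\|x_{k+1}-x_k\|^2<+\infty$ and $\|T_{\lambda_k,\gamma_k}(x_k)\|$ is summable in an appropriate weighted sense) should yield that $\lim_k\|x_k-\overline{x}\|$ exists for every $\overline{x}\in\zer(A+B)$. The identification of weak cluster points proceeds exactly as in the proof of Theorem \ref{maintheorem}: one uses $U_{\gamma_k}(x_k)=\lambda_k T_{\lambda_k,\gamma_k}(x_k)\to 0$, extracts a subsequence along which $\gamma_k\to\overline{\gamma}\in[\inf\gamma_k,\sup\gamma_k]\subset(0,2\beta)$, and exploits the maximal monotonicity of $U_{\overline{\gamma}}$ together with the weak-strong closedness of its graph.

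Finally, for (iii), from the definition $y_k-x_k=\alpha_k(x_k-x_{k-1})-\xi(T_{\lambda_k,\gamma_k}(x_k)-T_{\lambda_{k-1},\gamma_{k-1}}(x_{k-1}))$ and the fact that $\alpha_k$ is bounded, the rate $\|x_k-y_k\|=\mathcal{O}(1/k)$ follows once one observes (via Lemma \ref{lemma1}(ii) applied with $\lambda_{k-1},\gamma_{k-1},\lambda_k,\gamma_k$) that the operator increment is controlled by $\|x_k-x_{k-1}\|$, $|\gamma_k-\gamma_{k-1}|/\gamma_k$ and $|\lambda_k-\lambda_{k-1}|/(\lambda_k\lambda_{k-1})$, each of which is $\mathcal{O}(1/k)$ under the hypotheses. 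The weak convergence of $(y_k)$ to the same limit as $(x_k)$ is then immediate. The main obstacle throughout will be the delicate algebraic manipulation in the first paragraph, particularly the choice of the auxiliary parameter $\varepsilon$ and the verification that the resulting quadratic form becomes nonpositive under the stated bound on $\lambda$.
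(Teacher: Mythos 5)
The paper does not actually give a proof of this theorem: it only says that ``the proof can be done by transposing the techniques used in the continuous time case to the discrete time case'' and refers to the (PRINAM) analysis of Attouch--L\'aszl\'o. Your proposal is precisely such a transposition, and it mirrors the Lyapunov argument of Theorem~\ref{maintheorem} step by step: the discrete energy $E_k$ is the direct analogue of $\mathcal{E}(t)$, the use of the $\tfrac{\lambda_k}{2}$-cocoercivity of $T_{\lambda_k,\gamma_k}$, the summability estimates, the fourth-power telescoping trick to upgrade $\mathcal{O}(1/k^2)$ to $o(1/k^2)$, the discrete Opial lemma, and the control of $y_k-x_k$ via Lemma~\ref{lemma1}(ii) all parallel the continuous proof. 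So your approach is the one the paper indicates. The one place where the argument remains a roadmap, which you honestly flag yourself, is the discrete discriminant computation: you claim, but do not show, that the extra remainder terms from replacing derivatives by differences are absorbed exactly under the strengthened bound $\lambda > \tfrac{4\xi+2}{(\alpha-1)^2}$ (which, consistent with the Remark after the theorem, is twice the constant $\tfrac{2\xi+1}{(\alpha-1)^2}$ that works for $B=0$, because there the governing operator is $\lambda$-cocoercive rather than $\tfrac{\lambda}{2}$-cocoercive). Since the paper gives no proof either, this step cannot be cross-checked, but it is where a complete write-up would have to invest the work, ideally by adapting the PRINAM computations in \cite{AttouchLaszloDiscrete} with the cocoercivity modulus $\lambda/2$ in place of $\lambda$.
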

The proof can be done by transposing the techniques used in the continuous time case to the discrete time case. Algorithm $(\ref{algorithm})$ can be seen as a splitting version of the (PRINAM) algorithm studied by Attouch and L\'aszl\'o in \cite{AttouchLaszloDiscrete}.

\begin{remark}
The second step in $(\ref{algorithm})$ can be quite complicated to compute. However, if $B = 0$, we can resort to the fact that $(A_{\lambda_{1}})_{\lambda_{2}} = A_{\lambda_{1} + \lambda_{2}}$ for $\lambda_{1}, \lambda_{2} > 0$. We now have, for $\lambda, \gamma > 0$,
\[
T_{\lambda, \gamma} = \frac{1}{\lambda}\Big[\id - J_{\gamma A}\Big] = \frac{\gamma}{\lambda}A_{\gamma},
\]
which gives
\begin{equation*}
    \left(\id + T_{\gamma, \lambda}\right)^{-1} = J_{\frac{\gamma}{\lambda}A_{\gamma}} = -\frac{\gamma}{\lambda}\left(A_{\lambda}\right)_{\frac{\gamma}{\lambda}} + \id = \id - \frac{\gamma}{\lambda}A_{\lambda + \frac{\gamma}{\lambda}}.
\end{equation*}
It is now possible to write $(\ref{algorithm})$ in terms of the resolvents of $A$. We have, for every $k\geq 1$,
\begin{align*}
    T_{\lambda_{k}, \gamma_{k}}(x_{k}) - T_{\lambda_{k - 1}, \gamma_{k - 1}}(x_{k - 1}) = \: &\frac{1}{\lambda_{k}}\Big[x_{k} - J_{\gamma_{k}A}(x_{k})\Big] - \frac{1}{\lambda_{k - 1}}\Big[x_{k - 1} - J_{\gamma_{k - 1}A}(x_{k - 1})\Big] \\
    = \: &\left(\frac{1}{\lambda_{k}} - \frac{1}{\lambda_{k - 1}}\right)x_{k} + \frac{1}{\lambda_{k - 1}}(x_{k} - x_{k - 1}) \\
    &- \left(\frac{1}{\lambda_{k}}J_{\gamma_{k}A}(x_{k}) - \frac{1}{\lambda_{k - 1}}J_{\gamma_{k - 1}A}(x_{k - 1})\right), \\
    y_{k} - \frac{\gamma_{k + 1}}{\lambda_{k + 1}}A_{\lambda_{k + 1} + \frac{\gamma_{k + 1}}{\lambda_{k + 1}}}(y_{k}) = \: &y_{k} - \frac{\gamma_{k + 1}}{\lambda_{k + 1}}\frac{1}{\frac{\lambda_{k + 1}^{2} + \gamma_{k + 1}}{\lambda_{k + 1}}}\left[y_{k} - J_{\left(\lambda_{k + 1} + \frac{\gamma_{k + 1}}{\lambda_{k + 1}}\right)A}(y_{k})\right] \\
    = \: &\frac{\lambda_{k + 1}^{2}}{\lambda_{k + 1}^{2} + \gamma_{k + 1}}y_{k} + \frac{\gamma_{k}}{\lambda_{k + 1}^{2} + \gamma_{k + 1}}J_{\left(\lambda_{k + 1} + \frac{\gamma_{k + 1}}{\lambda_{k + 1}}\right)A}(y_{k}).
\end{align*}
So now $(\ref{algorithm})$ becomes
\begin{equation}
(\forall k \geq 1) \ \left\{
\begin{aligned}\label{algorithm4}
    y_{k} = \: &\left(1 - \xi\left(\frac{1}{\lambda_{k}} - \frac{1}{\lambda_{k - 1}}\right)\right)x_{k} + \left(\alpha_{k} - \frac{\xi}{\lambda_{k - 1}}\right)(x_{k} - x_{k - 1}) \\
    &+ \xi\left(\frac{1}{\lambda_{k}}J_{\gamma_{k}A}(x_{k}) - \frac{1}{\lambda_{k - 1}}J_{\gamma_{k - 1}A}(x_{k - 1})\right), \\
    x_{k + 1} = \: &\frac{\lambda_{k + 1}^{2}}{\lambda_{k + 1}^{2} + \gamma_{k + 1}}y_{k} + \frac{\gamma_{k}}{\lambda_{k + 1}^{2} + \gamma_{k + 1}}J_{\left(\lambda_{k + 1} + \frac{\gamma_{k + 1}}{\lambda_{k + 1}}\right)A}(y_{k}).
\end{aligned}
\right.
\end{equation}
Now, if we assume $0 < \inf_{k\geq 0}\gamma_{k}$ and $\lambda > \frac{2\xi + 1}{(\alpha - 1)^{2}}$ and otherwise keep the hypotheses of Theorem \ref{discrete theorem}, then for the sequences $(x_{k})_{k\geq 0}$ and $(y_{k})_{k\geq 1}$ generated by $(\ref{algorithm4})$, the following statements hold:

\begin{enumerate}[(i)]
    \item We have the estimates 
    \[
    \|x_{k + 1} - x_{k}\| = \mathcal{O}\left(\frac{1}{k}\right) \quad \text{and} \quad \left\|A_{\gamma_{k}}(x_{k})\right\| = o\left(\frac{1}{\gamma_{k}}\right) \quad \text{as} \quad k\to +\infty.
    \]
    \item The sequence $(x_{k})_{k\geq 0}$ converges weakly to an element of $\zer A$.
    \item The sequence $(y_{k})_{k\geq 1}$ converges weakly to an element of $\zer A$ as well. Precisely, we have $\|x_{k} - y_{k}\| = \mathcal{O}\left(\frac{1}{k}\right)$ as $k\to +\infty$.
\end{enumerate}
Notice that the condition required for $(\gamma_{k})_{k\geq 0}$ is fulfilled in particular for $\gamma_{k} = k^{n}$ for every $k\geq 1$ and a natural number $n \geq 1$. Thus, by choosing large $n$, we obtain a fast convergence rate for $A_{\gamma_{k}}(x_{k})$ as $k\to +\infty$.

\end{remark}

\appendix

\section{Appendix}
The following are three auxiliary lemmas that are used in the proof of Theorem \ref{maintheorem}. The proof for Lemma \ref{A2} can be found in \cite{AttouchPeypouquet}, while the proof of Lemma \ref{A3} is straightforward. For the proof of Opial's Lemma, we refer the reader to \cite[Lemma 1.10]{AbbasAttouch}.

\settheoremtag{A.1}
\begin{lemma}\label{A2}
Let $t_{0} > 0$, and let $u: [t_{0}, +\infty)\to \R$ be a continuously differentiable function which is bounded from below. Given $\alpha >1$, a nonnegative function $\theta : [t_{0}, +\infty)\to \R$ and a nonnegative function $k\in L^{1}([t_{0}, +\infty), \R)$, let us assume that 
\[
t\ddot{u}(t) + \alpha\dot{u}(t) + \theta(t) \leq k(t)
\]
for almost every $t \geq t_{0}$. Then, the positive part $[\dot{u}]_{+}$ of $\dot{u}$ belongs to $L^{1}([t_{0}, +\infty), \R)$ and $\lim_{t\to +\infty}u(t)$ exists. Moreover, we have $\int_{t_{0}}^{+\infty}\theta(t)dt < +\infty$.
\end{lemma}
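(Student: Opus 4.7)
My plan is to apply the integrating-factor $t^{\alpha - 1}$, which rewrites the hypothesis as
\[
\frac{d}{dt}\bigl(t^{\alpha}\dot{u}(t)\bigr) + t^{\alpha - 1}\theta(t) \leq t^{\alpha - 1}k(t) \quad \text{for a.e. } t\geq t_{0}.
\]
Dropping the nonnegative $\theta$-term and integrating from $t_{0}$ to $t$ would yield an upper bound of the form $\dot{u}(t) \leq t^{-\alpha}\bigl(C + \int_{t_{0}}^{t}s^{\alpha - 1}k(s)\,ds\bigr)$. I would then take positive parts and integrate over $[t_{0}, +\infty)$. The constant contribution is integrable thanks to $\alpha>1$, and for the remaining double integral I would swap the order of integration via Fubini--Tonelli (all integrands being nonnegative): the inner integral $\int_{s}^{+\infty}t^{-\alpha}\,dt = \tfrac{s^{1-\alpha}}{\alpha - 1}$ cancels the weight $s^{\alpha - 1}$, leaving $\tfrac{1}{\alpha-1}\int_{t_{0}}^{+\infty}k(s)\,ds<+\infty$. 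This establishes $[\dot{u}]_{+}\in L^{1}([t_{0},+\infty),\R)$.

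Existence of $\lim_{t\to+\infty}u(t)$ would then follow by a monotone-function bootstrap. Writing $u(t)=u(t_{0})+\int_{t_{0}}^{t}[\dot{u}]_{+} - \int_{t_{0}}^{t}[\dot{u}]_{-}$, the first integral is nondecreasing with the finite limit just obtained, while the second is also nondecreasing. The lower bound on $u$ then forces the second integral to be bounded above, so it converges as well, and thus $u(t)$ converges. As a by-product, $\dot{u}\in L^{1}$.

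For $\int_{t_{0}}^{+\infty}\theta<+\infty$, the integrating-factor approach is not directly usable, since it would only yield $\int t^{\alpha - 1}\theta<+\infty$ under the unavailable assumption $\int t^{\alpha - 1}k<+\infty$. My plan is to use instead the identity $t\ddot{u}+\alpha\dot{u} = (t\dot{u})' + (\alpha-1)\dot{u}$ and integrate the inequality once to obtain
\[
t\dot{u}(t) + (\alpha - 1)\bigl(u(t)-u(t_{0})\bigr) + \int_{t_{0}}^{t}\theta(s)\,ds \leq t_{0}\dot{u}(t_{0}) + \int_{t_{0}}^{t}k(s)\,ds.
\]
Since $u$ converges and $k\in L^{1}$, everything here except $t\dot{u}(t)$ and $\int_{t_{0}}^{t}\theta$ is bounded uniformly in $t$. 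I would then argue by contradiction: if $\int_{t_{0}}^{+\infty}\theta=+\infty$, the displayed inequality forces $t\dot{u}(t)\to -\infty$, so eventually $\dot{u}(t)\leq -M/t$ for arbitrarily large $M$; integrating gives $u(t)\leq u(T) - M\ln(t/T)\to -\infty$, contradicting the lower bound on $u$. I expect this final step to be the main obstacle: one must abandon the integrating-factor manipulation in favor of a direct integration, and then combine all three ingredients (the convergence of $u$, the integrability of $k$, and the lower bound on $u$) in a single contradiction argument.
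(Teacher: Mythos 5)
The paper does not give its own proof of Lemma~\ref{A2}; it only refers the reader to \cite{AttouchPeypouquet}, so there is no in-paper argument to compare against line by line. Your self-contained proof is correct in all three parts, and the route you take is essentially the standard one for auxiliary lemmas of this type in the Attouch school.

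A few checks worth recording. The integrating-factor step is exact: $\frac{d}{dt}\bigl(t^{\alpha}\dot u(t)\bigr)=t^{\alpha-1}\bigl(t\ddot u(t)+\alpha\dot u(t)\bigr)$, so dropping the nonnegative $\theta$-term and integrating gives $\dot u(t)\leq t^{-\alpha}\bigl(t_{0}^{\alpha}\dot u(t_{0})+\int_{t_{0}}^{t}s^{\alpha-1}k(s)\,ds\bigr)$, and since the second summand on the right is nonnegative, $[\dot u]_{+}(t)\leq t^{-\alpha}|t_{0}^{\alpha}\dot u(t_{0})|+t^{-\alpha}\int_{t_{0}}^{t}s^{\alpha-1}k(s)\,ds$. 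Integrating in $t$, the first term is finite because $\alpha>1$, and Fubini--Tonelli reduces the second to $\frac{1}{\alpha-1}\int_{t_{0}}^{\infty}k<\infty$; this gives $[\dot u]_{+}\in L^{1}$. The decomposition $u(t)=u(t_{0})+\int_{t_{0}}^{t}[\dot u]_{+}-\int_{t_{0}}^{t}[\dot u]_{-}$ then forces, via the lower bound on $u$ and the convergence of $\int_{t_{0}}^{t}[\dot u]_{+}$, the convergence of $\int_{t_{0}}^{t}[\dot u]_{-}$, hence of $u(t)$; in passing one gets $\dot u\in L^{1}$.

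Your diagnosis of why the integrating-factor route breaks down for the final assertion is exactly right: it would deliver $\int_{t_{0}}^{\infty}s^{\alpha-1}\theta(s)\,ds<\infty$ only if $\int_{t_{0}}^{\infty}s^{\alpha-1}k(s)\,ds<\infty$, which is not implied by $k\in L^{1}$ when $\alpha>1$. The replacement you use, $t\ddot u+\alpha\dot u=(t\dot u)'+(\alpha-1)\dot u$, integrated once, gives
\[
t\dot u(t)+(\alpha-1)\bigl(u(t)-u(t_{0})\bigr)+\int_{t_{0}}^{t}\theta\leq t_{0}\dot u(t_{0})+\int_{t_{0}}^{t}k\leq M_{0}
\]
for all $t$, with $M_{0}$ finite because $u$ converges and $k\in L^{1}$. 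If $\int_{t_{0}}^{\infty}\theta=+\infty$, then past some $T$ one has $t\dot u(t)\leq -1$, i.e.\ $\dot u(t)\leq -1/t$, and integrating from $T$ to $t$ gives $u(t)\leq u(T)-\ln(t/T)\to-\infty$, contradicting the lower bound on $u$. This is clean; note that a fixed $M$ (say $M=1$) already suffices for the contradiction, so the ``arbitrarily large $M$'' phrasing is more than is needed. The step you flagged as the main obstacle is in fact handled correctly.
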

\settheoremtag{A.2}
\begin{lemma}\label{A3}
Let $A, B, C\in \R$ and $\mathcal{H}$ a real Hilbert space. Then the inequality
\[
A\|X\|^{2} + 2C\langle X, Y\rangle + B\|Y\|^{2} \leq 0 
\]
holds for every $X, Y\in\mathcal{H}$ if and only if $A, B\leq 0 $ and $C^{2} - AB \leq 0$.
\end{lemma}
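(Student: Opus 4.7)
The statement is an elementary fact about the negative semi-definiteness of a $2 \times 2$ quadratic form induced on $\mathcal{H} \times \mathcal{H}$, so the plan is to split into the two implications and reduce each to a scalar inequality.

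For the direction $(\Rightarrow)$, I would test the inequality against simple inputs. Setting $Y = 0$ yields $A\|X\|^{2} \leq 0$ for every $X \in \mathcal{H}$, so $A \leq 0$; symmetrically, $X = 0$ forces $B \leq 0$. To extract the discriminant condition $C^{2} \leq AB$, pick any unit vector $e \in \mathcal{H}$ and restrict to $X = se$, $Y = te$ with $s, t \in \R$. The assumed inequality becomes the scalar inequality $A s^{2} + 2 C s t + B t^{2} \leq 0$ for all $s, t \in \R$. Viewing the left-hand side as a quadratic in $s$ for fixed $t \neq 0$, its negative semi-definiteness forces the discriminant $4 C^{2} t^{2} - 4 A B t^{2}$ to be nonpositive (handling the borderline case $A = 0$ separately by letting $s \to \pm\infty$), yielding $C^{2} - AB \leq 0$.

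For the direction $(\Leftarrow)$, the cleanest route is through Cauchy–Schwarz combined with AM–GM. Assume $A, B \leq 0$ and $C^{2} \leq AB$; note that this last inequality forces $AB \geq 0$, which is consistent with $A, B \leq 0$, and gives $|C| \leq \sqrt{AB} = \sqrt{|A|\,|B|}$. For arbitrary $X, Y \in \mathcal{H}$, Cauchy–Schwarz gives
\[
2 C \langle X, Y\rangle \leq 2 |C|\, \|X\|\, \|Y\| \leq 2 \sqrt{|A|\,|B|}\, \|X\|\, \|Y\|,
\]
and then AM–GM yields $2 \sqrt{|A|\,|B|}\, \|X\|\, \|Y\| \leq |A|\,\|X\|^{2} + |B|\,\|Y\|^{2} = -A \|X\|^{2} - B \|Y\|^{2}$. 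Rearranging produces $A\|X\|^{2} + 2C\langle X, Y\rangle + B\|Y\|^{2} \leq 0$, as desired.

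There is no real obstacle here; the only subtlety is bookkeeping signs (noting $|A|\,|B| = AB$ under $A, B \leq 0$) and dispatching the degenerate cases (e.g.\ $A = 0$ in the forward direction, where one must conclude $C = 0$ before invoking the discriminant argument). The proof is elementary throughout and essentially finite-dimensional, since the inequality already reduces to the scalar inequality on the plane spanned by $X$ and $Y$.
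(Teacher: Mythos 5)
Your proof is correct. The paper itself gives no argument for this lemma, dismissing it as straightforward, so there is no textual comparison to make; your argument is the natural one. Both directions are handled properly: the necessity direction correctly isolates the degenerate case $A=0$ (where letting $s\to\pm\infty$ forces $C=0$ and hence $C^2-AB=0$), and the sufficiency direction via Cauchy–Schwarz followed by AM–GM, using $|C|\le\sqrt{|A|\,|B|}$ and $|A|\,|B|=AB$, covers the boundary cases $A=0$ or $B=0$ without any special pleading.
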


\settheoremtag{A.3}
\begin{lemma}[Opial's Lemma]\label{opial lemma}
Let $S\subseteq \mathcal{H}$ be a nonempty set and $x : [t_{0}, +\infty) \to \mathcal{H}$ a given map, where $t_{0} > 0$. Assume that
\begin{enumerate}[(i)]
    \item for every $x^{*} \in S$, $\lim_{t\to +\infty}\|x(t) - x^{*}\|$ exists;
    \item every weak sequential cluster point of the map $x$ belongs to $S$.
\end{enumerate}
Then, there exists $x_{\infty} \in S$ such that $x(t)$ converges weakly to $x_{\infty}$ as $t\to +\infty$.
\end{lemma}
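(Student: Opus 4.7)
The plan is to invoke the standard argument: hypothesis (i) gives boundedness and hypothesis (ii) gives the existence of a weak cluster point; it then remains only to show uniqueness of the cluster point.

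First I would fix an arbitrary $x^{\ast}\in S$ (this is possible as $S\neq\emptyset$) and note that by (i) the real-valued function $t\mapsto \|x(t)-x^{\ast}\|$ has a finite limit as $t\to +\infty$, so it is bounded. Hence the trajectory $t\mapsto x(t)$ is bounded in $\mathcal{H}$. Since $\mathcal{H}$ is a Hilbert space, it is reflexive, so by the Banach-Alaoglu theorem every sequence $(x(t_{n}))_{n\in\N}$ with $t_{n}\to +\infty$ admits a weakly convergent subsequence. In particular, the set of weak sequential cluster points of $x$ is nonempty; by (ii) it is contained in $S$.

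The core step is to prove that this cluster set is a singleton. Suppose by contradiction that $x_{1}, x_{2}\in S$ are two distinct weak cluster points, realized along sequences $t_{n}\to +\infty$ and $s_{n}\to +\infty$ respectively. Using the identity
\[
\|x(t)-x_{1}\|^{2}-\|x(t)-x_{2}\|^{2} = 2\langle x(t), x_{2}-x_{1}\rangle + \|x_{1}\|^{2}-\|x_{2}\|^{2}
\]
and the fact that by (i) both $\|x(t)-x_{1}\|$ and $\|x(t)-x_{2}\|$ converge as $t\to +\infty$, I deduce that $t\mapsto \langle x(t), x_{2}-x_{1}\rangle$ admits a limit $\ell\in\R$. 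Passing to the limit along $(t_{n})$ and $(s_{n})$ separately gives
\[
\ell = \langle x_{1}, x_{2}-x_{1}\rangle = \langle x_{2}, x_{2}-x_{1}\rangle,
\]
hence $\langle x_{2}-x_{1}, x_{2}-x_{1}\rangle = 0$, i.e.\ $x_{1}=x_{2}$, a contradiction.

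Therefore $x$ admits a unique weak sequential cluster point $x_{\infty}\in S$. Combined with the boundedness of $x$, a standard subsequence argument (if $x(t)\not\rightharpoonup x_{\infty}$, extract a sequence $t_{n}\to +\infty$ staying in a weak neighborhood complement of $x_{\infty}$, then extract a further weakly convergent subsequence, whose limit must be $x_{\infty}$ by uniqueness — contradiction) yields $x(t)\rightharpoonup x_{\infty}$ as $t\to +\infty$. The only delicate point is the last bounded-net-to-weak-limit step, which is purely topological and standard in Hilbert spaces; the algebraic identity above is the actual substance of the argument.
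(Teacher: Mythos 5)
Your proof is correct and is the classical argument for Opial's lemma. The paper itself does not supply a proof; it only points to \cite[Lemma 1.10]{AbbasAttouch}, and the argument there is the same one you give: boundedness of the trajectory from (i), extraction of a weak cluster point by reflexivity, and uniqueness via the polarization-type identity showing that $t\mapsto\langle x(t),\,x_{2}-x_{1}\rangle$ converges, forcing $\|x_{1}-x_{2}\|^{2}=0$. Your closing subsequence argument for upgrading a unique weak cluster point of a bounded trajectory to weak convergence of the whole trajectory is also standard and fine.
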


 

\begin{thebibliography}{}
\bibitem{AbbasAttouch} Abbas, B., Attouch, H. ``Dynamical systems and forward-backward algorithms associated
with the sum of a convex subdifferential and a monotone cocoercive operator''. In: \textit{Optimization} \textbf{64}(10), pp. 2223--2252 (2015)

\bibitem{Alvarez} \'Alvarez, F. ``On the minimizing property of a second-order dissipative system in Hilbert spaces''. In: \textit{SIAM Journal on Control Optimization} \textbf{38}(4), pp. 1102--1119 (2020)

\bibitem{AlvarezAttouch} \'Alvarez, F., Attouch, H. ``An inertial proximal method for maximal monotone operators via discretization of a nonlinear oscillator with damping''. In: \textit{Set-Valued Analysis} \textbf{9}(1-2), pp. 3--11 (2001)

\bibitem{AlvarezAttouchBolteRedont} \'Alvarez, F., Attouch, H., Bolte, J., Redont, P. ``A second-order gradient-like dissipative dynamical system with Hessian-driven damping. Application to optimization and mechanics''. In: \textit{Journal de Mathématiques Pures et Apliqueés} \textbf{81}(8) pp. 747--779 (2002)

\bibitem{Apidopoulos} Apidopoulos, V., Aujol, J.-F., Dossal, C. ``Convergence rate of inertial forward-backward algorithm beyond Nesterov's rule''. In: \textit{Mathematical Programming} \textbf{180}, pp. 137--156 (2020)

\bibitem{AttouchChbaniPeypouquetRedont} Attouch, H., Chbani, Z., Peypouquet, J., Redont, P. ``Fast convergence of inertial dynamics and algorithms with asymptotic vanishing viscosity''. In: \textit{Mathematical Programming} \textbf{168}, pp. 123--175 (2018)

\bibitem{AttouchChbaniRiahi} Attouch, H., Chbani, Z., Riahi, H. ``Rate of convergence of the Nesterov accelerated gradient method in the subcritical case $\alpha \leq 3$''. In: \textit{ESAIM: COCV} \textbf{25}, Article number 2 (2019)

\bibitem{AttouchGoudouRedont} Attouch, H., Goudou, X., Redont, P. `` The heavy ball with friction method. The continuous dynamical system, global exploration of the local minima of a real-valued function by asymptotical analysis of a dissipative dynamical system. In: \textit{Communications in Contemporary Mathematics} \textbf{2}(1), pp. 1--34 (2000)

\bibitem{AttouchLaszlo} Attouch, H., L\'aszl\'o, S.C. ``Continuous Newton-like inertial dynamics for monotone inclusions''. In: \textit{Set-Valued and Variational Analysis} \textbf{29}, pp. 555--581 (2021)

\bibitem{AttouchLaszloDiscrete} Attouch, H., L\'aszl\'o, S.C. ``Newton-like inertial dynamics and proximal algorithms governed by maximally monotone operators''. In: \textit{SIAM Journal on Optimization} \textbf{30}(4), pp. 3252--3283 (2020)

\bibitem{AttouchMainge} Attouch, H., Maing\'e, P.E. ``Asymptotic behavior of second order dissipative evolution equations combining potential with non-potential effects''. In: \textit{ESAIM Control Optimization and Calculus of Variations} \textbf{17}(3), pp. 836--857 (2011)

\bibitem{AttouchPeypouquet} Attouch, H., Peypouquet, J. ``Convergence of inertial dynamics and proximal algorithms governed by maximal monotone operators''. In: \textit{Mathematical Programming} \textbf{174}(1-2), pp. 391--432 (2019).

\bibitem{AttouchPeypouquetRedont} Attouch, H., Peypouquet, J., Redont, P. ``Fast convex minimization via inertial dynamics with Hessian driven damping''. In: \textit{Journal of Differential Equations} \textbf{210}(10), pp. 5734--5783 (2016).

\bibitem{BC} Bauschke, H. H., Combettes, P. L. \textit{Convex Analysis and Monotone Operator Theory}. Second Edition. CMS Books in Mathematics. Springer, 2017

\bibitem{FISTA} Beck, A., Teboulle, M. ``A Fast Iterative Shrinkage-Thresholding algorithm for linear inverse problems''. In: \textit{SIAM Journal on Imaging Sciences} \textbf{2}(1), pp. 183--202

\bibitem{BotCsetnek} Bo\c t, R.I., Csetnek, E.R., ``Second order forward-backward dynamical systems for monotone inclusion problems''. In: \textit{SIAM Journal on Control and Optimization} \textbf{54}, pp. 1423--1443 (2016)

\bibitem{Haraux} Haraux, A. \textit{Syst\'emes Dynamiques Dissipatifs et Applications}. Masson, 1991

\bibitem{May} May, R. ``Asymptotic for a second order evolution equation with convex potential and vanishing damping term''. In: \textit{Turkish Journal of Mathematics} \textbf{41}(3), pp. 681--685 (2017)

\bibitem{Nesterov} Nesterov, Y. ``A method of solving a convex programming problem with convergence rate $\mathcal{O}(1/k^{2})$''. In: \textit{Doklady Akademii Nauk SSSR} \textbf{269}(3), pp. 543--547 (1983)

\bibitem{SuBoydCandes} Su, W.J., Boyd, S., Candès, E.J. ``A differential equation for modeling Nesterov's accelerated gradient method: theory and insights''. In: \textit{Neural Information Processing Systems} \textbf{27}, pp. 2510--2518 (2014)

\end{thebibliography}
\end{document}